\DeclareSymbolFont{cyrletters}{OT2}{wncyr}{m}{n}
\DeclareMathSymbol{\Sha}{\mathalpha}{cyrletters}{"58}
\theoremstyle{plain}
\newtheorem{thm}{Theorem}[section]
\newtheorem{theorem}[thm]{Theorem}
\newtheorem{corollary}[thm]{Corollary}
\newtheorem{lem}[thm]{Lemma}
\newtheorem{lemma}[thm]{Lemma}
\newtheorem{prop}[thm]{Proposition}
\newtheorem{proposition}[thm]{Proposition}
\newtheorem{conjecture}[thm]{Conjecture}
\theoremstyle{remark}
\newtheorem{remark}[thm]{Remark}
\theoremstyle{definition}
\newtheorem{defn}[thm]{Definition}
\newtheorem{definition}[thm]{Definition}
\newtheorem{defn+lem}[thm]{Definition and Lemma}
\numberwithin{equation}{section}
\newcommand{\BA}{{\mathbb{A}}} \newcommand{\BB}{{\mathbb{B}}}
\newcommand{\BC}{{\mathbb{C}}} 
 \newcommand{\BF}{{\mathbb{F}}}
\newcommand{\BG}{{\mathbb{G}}} \newcommand{\BH}{{\mathbb{H}}}
 \newcommand{\BN}{{\mathbb{N}}}
\newcommand{\BQ}{{\mathbb{Q}}} \newcommand{\BR}{{\mathbb{R}}}
\newcommand{\BW}{{\mathbb{W}}} 
 \newcommand{\BZ}{{\mathbb{Z}}}
\newcommand{\CA}{{\mathcal{A}}} 
\newcommand{\CC}{{\mathcal{C}}} 
 \newcommand{\CF}{{\mathcal{F}}}
\newcommand{\CG}{{\mathcal{G}}} 
\newcommand{\CI}{{\mathcal{I}}} 
\newcommand{\CK}{{\mathcal{K}}} \newcommand{\CL}{{\mathcal{L}}}
\newcommand{\CO}{{\mathcal{O}}} \newcommand{\CP}{{\mathcal{P}}}
\newcommand{\CS}{{\mathcal{S}}} \newcommand{\CT}{{\mathcal{T}}}
\newcommand{\CU}{{\mathcal{U}}} \newcommand{\CV}{{\mathcal{V}}}
\newcommand{\CW}{{\mathcal{W}}} \newcommand{\CX}{{\mathcal{X}}}
\newcommand{\fg}{{\mathfrak{g}}}
\newcommand{\fm}{{\mathfrak{m}}} 
 \newcommand{\fp}{{\mathfrak{p}}}
 \newcommand{\ft}{{\mathfrak{t}}}
 \newcommand{\fX}{{\mathfrak{X}}}
\newcommand{\fY}{{\mathfrak{Y}}} 
\newcommand{\HT}{\mathrm{HT}}
      \newcommand{\ad}{{\mathrm{ad}}}
      \newcommand{\alg}{{\mathrm{alg}}}
\newcommand{\Char}{{\mathrm{Char}}}  
  \newcommand{\Cond}{{\mathrm{Cond}}}
    \newcommand{\cl}{{\mathrm{cl}}}
\newcommand{\Cl}{{\mathrm{Cl}}}
\newcommand{\dR}{{\mathrm{dR}}}      
\newcommand{\disc}{{\mathrm{disc}}}
\newcommand{\End}{{\mathrm{End}}}    \newcommand{\ext}{{\mathrm{ext}}}
\newcommand{\Frac}{{\mathrm{Frac}}}  \newcommand{\Fr}{{\mathrm{Fr}}}
      \newcommand{\Gal}{{\mathrm{Gal}}}
\newcommand{\GL}{{\mathrm{GL}}}      
      \newcommand{\Hom}{{\mathrm{Hom}}}
\newcommand{\Ha}{{\mathrm{Ha}}}      \newcommand{\Hdg}{{\mathrm{Hdg}}}
\newcommand{\id}{{\mathrm{id}}}
\newcommand{\IG}{{\mathrm{IG}}}      
\newcommand{\Isom}{{\mathrm{Isom}}}
\newcommand{\KS}{{\mathrm{KS}}}      
\newcommand{\Lie}[1]{{\mathrm{Lie}}(#1)}  
    \newcommand{\mom}{{\mathrm{mom}}}
\newcommand{\op}{{\mathrm{op}}}
\newcommand{\ord}{{\mathrm{ord}}}    
        \newcommand{\pr}{{\mathrm{pr}}}
\renewcommand{\mod}{\ \mathrm{mod}\ }\renewcommand{\Re}{{\mathrm{Re}}}
\newcommand{\red}{{\mathrm{red}}}
\newcommand{\Spf}{{\mathrm{Spf}\hspace{2pt}}}
\newcommand{\SO}{{\mathrm{SO}}}      
\newcommand{\St}{{\mathrm{St}}}      
\newcommand{\Sym}{{\mathrm{Sym}}}
\newcommand{\TSym}{{\mathrm{TSym}}}
      \newcommand{\RTr}{{\mathrm{Tr}}}
  \newcommand{\ur}{{\mathrm{ur}}}
\newcommand{\val}{{\mathrm{val}}}    \newcommand{\Vol}{{\mathrm{Vol}}}
\newcommand{\Fil}{\mathrm{Fil}}
\renewcommand{\-}{\text{--}}
\newcommand{\dfn}[1]{\textit{#1}}
\newcommand{\bs}{\backslash}
\newcommand{\sk}{\medskip}   
\newcommand{\s}{\sk\noindent}
\newcommand{\sheafhom}{\mathrm{H}\kern -.5pt om}
\newcommand{\CIG}{{\mathcal{IG}}} \newcommand{\fIG}{{\mathfrak{IG}}}
\newcommand{\udelta}{{\underline{\delta}}} 
\newcommand{\uomega}{{\underline{\omega}}}
\newcommand{\idele}{id\`{e}le\xspace}
\newcommand{\Poincare}{Poincar\'{e}\xspace}
\def\varW@#1#2{%
  \vtop{\m@th\ialign{##\cr
  \hfil$#1 \mathrm{colim}$\hfil\cr
  \noalign{\nointerlineskip\kern1.5\ex@}#2\cr
  \noalign{\nointerlineskip\kern-\ex@}\cr}}
}
\def\colim{%
  \mathop{\mathpalette\varW@{}}\nmlimits@
}
\def\XXint#1#2#3{{\setbox0=\hbox{$#1{#2#3}{\int}$}
    \vcenter{\hbox{$#2#3$}}\kern-.5\wd0}}
\begin{document}
\title{$p$-adic Waldspurger Formula for Non-split Primes and Converse of Gross--Zagier and Kolyvagin Theorem}

\author{Yangyu Fan and Xin Wan}

\date{}



\maketitle
\begin{abstract}
Let $p$ be a prime and $\mathcal{K}$ be an imaginary quadratic field. In this paper we generalize a recent construction of a new type of $p$-adic $L$-function and $p$-adic Waldspurger formula by Andreatta-Iovita for $p$ non-split in $\mathcal{K}$, as long as the $\mathrm{GL}_2$ automorphic representation is principal series at $p$. Then we develop a new kind of anticyclotomic local $\pm$-Iwasawa theory at $p$ for self-dual Hecke characters over imaginary quadratic fields (including elliptic curves $E/\mathbb{Q}$ with complex multiplication) which is valid for all ramification types of $p$ (split, inert and ramified, and allowing $p=2$). As the main consequence, we prove the converse of the Gross--Zagier--Kolyvagin theorem for self-dual CM characters: if the Selmer rank of $\psi$ is 1, then the analytic rank of $L(\psi,s)$ at $s=1$ is also 1. As corollaries, we prove Sylvester's conjecture (1879) on sums of two rational cubes, and Goldfeld's conjecture for CM elliptic curves over $\mathbb{Q}$ (conditional on work of A. Smith).
\end{abstract}
\section{Introduction}
\subsection{Background}
The deep relations between special values of $L$-functions and arithmetic objects constitute a central problem in number theory. Let $p$ be a prime number. The famous Bloch--Kato conjecture \cite{BK} predicts that for a motive whose $p$-adic realization is $\rho$, the rank of its $p$-adic Selmer group is equal to the vanishing order of the $L$-function $L(\rho^\vee(1),s)$ at $s=0$. This conjecture remains open in general.

When $\rho$ is the Galois representation associated to an elliptic curve $E$
  over $\mathbb{Q}$, the converse of the Gross--Zagier and Kolyvagin
  theorem is an important special case predicted by the Bloch--Kato
  and BSD conjectures:  if the Selmer rank of $E$ is
  $0$ or $1$, then the vanishing order of $L(E,s)$ at $s=1$ is
  exactly $0$ or $1$ respectively. We focus on the rank $1$ case.

When $E$ has no complex multiplication and $p$ is odd, the result is proved by Skinner \cite{Skinner} under the assumption of finiteness of the $p$-part of the Shafarevich--Tate group $\Sha_E$, and also by Zhang \cite{Zhang} when $E$ is ordinary at $p$. These assumptions are removed later by the work of the second author \cite{WAN} and Castella--Wan \cite{CW} by using anticyclotomic Iwasawa theory. This converse theorem has important arithmetic applications, including results on the average analytic rank of elliptic curves and the result of Bhargava-Skinner-Zhang  \cite{BSZ} that at least $66\%$  of elliptic curves satisfy the rank part of the BSD conjecture.

In the CM case, the rank $0$ case is proved unconditionally by Burungale--Tian (\cite{BurungaleTian}). In the rank $1$ case, Rubin proved this converse theorem in the $p$-ordinary case (thus $p$ is split in the  CM field $\mathcal{K}$) under the  assumption that  the $p$-part of the Shafarevich--Tate group $\Sha_E$ is finite. Recently Burungale--Tian \cite{BT} removed the finiteness assumption   when $p\geq 5$. When $E$ has good supersingular reduction (thus $p$ is inert in $\mathcal{K})$, Rubin set up a general local Iwasawa theory at $p$.  Let $\mathcal{K}^-_\infty$ be the anticyclotomic $\mathbb{Z}_p$-extension of $\mathcal{K}$ and $\Gamma_\CK^-:=\mathrm{Gal}(\mathcal{K}^-_\infty/\mathcal{K})\simeq \mathbb{Z}_p$. Let $\mathcal{K}^-_n$ be the sub-extension of $\mathcal{K}^-_\infty$ of index $p^n$ over $\mathcal{K}$. Rubin defined the $\pm$-subspaces $H^1_+$ ($H^1_-$) of the rank two (over the anticyclotomic Iwasawa algebra) local Iwasawa module $H^1_{\mathrm{Iw}}(\mathcal{K}^-_{\infty, p}, \psi)$ to be elements specializing to elements in $H^1_f$ (the finite part) at arithmetic points $\phi$ corresponding to characters of $\Gamma_\CK^-$ of odd (even) powers of  $p$, respectively. Rubin also made a fundamental conjecture stating that
$$H^1_{\mathrm{Iw}}(\mathcal{K}^-_{\infty,p}, \psi)=H^1_+\oplus H^1_-.$$
On the other hand, Rubin constructed cohomology classes $\kappa_{p^n}\in H^1(\mathcal{K}_n^-,\psi)$ from Heegner points of conductor $p^{n+1}$ (for modular forms of prime-to-$p$ level) which satisfy the norm relation
$$\mathrm{tr}_{\mathcal{K}^-_{n+2}/\mathcal{K}^-_{n+1}}(\kappa_{p^{n+2}})=-\kappa_{p^n},$$
and formed the $\pm$-family $\kappa_{\pm}$ by modifying by the $\pm$ $p$-adic logarithm function.

This conjecture of Rubin was proved recently by Burungale--Kobayashi--Ota \cite{BKO}, and from it they also deduced an anticyclotomic Iwasawa main conjecture involving the $\pm$ $p$-adic $L$-functions. The converse of the Gross--Zagier and Kolyvagin theorem was proved in this case  under the assumption that the $p$-part of the Shafarevich--Tate group is finite (this assumption arises from their different approach; see \cite[Theorem 1.5]{BKO1}).

A key tool in the proofs of \cite{Skinner}, \cite{WAN} and
  \cite{CW} is the Bertolini--Darmon--Prasanna (BDP) formula
  \cite{BDP} for   a
  Rankin pair $(f,\chi/\CK)$, i.e., $f$ is a $\mathrm{GL}_2/\mathbb{Q}$
  eigenform and $\chi$ is a Hecke character on an imaginary quadratic field $\mathcal{K}$ whose
  restriction to $\mathbb{A}^\times_\mathbb{Q}$ is the inverse of
  the central character of $f$ when the prime $p$ is split in $\mathcal{K}$. This Waldspurger type formula  relates the $p$-adic logarithm of
  the Heegner point to a special value of an anticyclotomic  $p$-adic
  $L$-function outside its range of interpolation. The Iwasawa theory of BDP $p$-adic $L$-function is thus closely related to the Iwasawa theory of Heegner points.

Recently, Andreatta--Iovita \cite{AI} constructed this anticyclotomic $p$-adic $L$-function for a prime $p$ non-split in $\mathcal{K}$ and established a $p$-adic Waldspurger formula, under the assumption that $p>2$, the automorphic representation $\sigma(f)$ is unramified at $p$, $\chi$ is sufficiently ramified at $p$ and a classical Heegner hypothesis (forcing the associated Shimura curve to be the modular curve). Their construction uses a technique of iterating Gauss--Manin connections in $p$-adic families on some overconvergent locus of the modular curve. These restrictions arise from geometric difficulties in estimating certain convergence radii.
\subsection{Main Results}
The main purpose of this paper is twofold: 
\begin{enumerate}[(A)]
    \item Generalize the work of Andreatta-Iovita \cite{AI} to construct the anticyclotomic $p$-adic $L$-function and establish the $p$-adic Waldspurger formulae for general Rankin pairs allowing all primes (including $2$), all ramifications and all weights with the only assumption  that  $\sigma(f)$ is a principal series at $p$.
    \item Develop a new kind of $\pm$-local Iwasawa theory which applies to all ramification types of $p$ in $\mathcal{K}$ (split, inert or ramified), and apply it to show the converse of the Gross--Zagier and Kolyvagin theorem for self-dual CM characters of $\mathcal{K}$ (including CM elliptic curves over $\mathbb{Q}$ as special cases).
\end{enumerate} 
Our main result is the $p$-converse theorem for self-dual Hecke characters on $\CK$. 
\begin{theorem}\label{ThmM}Let $\psi$ be a Hecke character of $\mathcal{K}$ with Archimedean type $(-1,0)$ whose restriction to $\mathbb{A}^\times_\mathbb{Q}$ is $|\cdot|^{-1}\chi_{\CK}$ where $\chi_{\CK}$ is the quadratic character corresponding to $\mathcal{K}/\mathbb{Q}$. 

If the rank of the Selmer group for $\psi$ is $1$, then the vanishing order of $L(\psi,s)$ at $s=1$ is also $1$.
\end{theorem}
The following theorem proves a conjecture of Sylvester (1879)
  \cite{Sylvester} (see the nice survey article \cite[Section 1]{DV} by Dasgupta and Voight for background).
  \begin{theorem}
  Every prime $p \equiv 4, 7,$ or $8 \pmod{9}$ can be written as
  $x^3 + y^3$ for $x, y \in \mathbb{Q}$.
  \end{theorem}
\begin{proof}
Consider the elliptic curve $E_p: x^3+y^3=p$. It has complex multiplication by $\mathbb{Q}(\sqrt{-3})$. It is well known that the global root number for the $L$-function of $E_p$ is $-1$. By \cite[Theorem 2.9]{Sta}, the $3$-adic Selmer rank of $E_p$ is less than or equal to $2$. By the parity result \cite[Theorem 1.9]{DD}, the $3$-adic Selmer rank is odd. Thus it must be $1$. By Theorem \ref{ThmM}, the $L$-function of $E_p$ has order $1$. Then by the theorem of Gross--Zagier and Kolyvagin, the Mordell--Weil rank of $E_p$ is also one. Consequently, the equation $x^3+y^3=p$ has infinitely many rational solutions.
\end{proof}
  
Another application is to Goldfeld's conjecture.
\begin{conjecture}
Let $E$ be an elliptic curve over $\mathbb{Q}$. Consider all quadratic twists of $E$ ordered by the conductor. Then $ 50\%$ of them have analytic rank $0$, and $50\%$  have analytic rank $1$.
\end{conjecture}
Recently Alexander Smith obtained some results (see \cite[Theorem 1.2]{Smith} and \cite[Theorem 1.1]{Smith1}) that in the above quadratic twist family, $50\%$  have $2$-adic Selmer rank $1$, and $50\%$ have $2$-adic Selmer rank $0$. See also the work \cite{PT25} of Pan and Tian. We have the following theorem (conditional on \cite[Theorem 1.1]{Smith1}). 
\begin{theorem}
Goldfeld's conjecture is true for CM elliptic curves over $\mathbb{Q}$.
\end{theorem}
\begin{proof}
Let $E/\mathbb{Q}$ be an elliptic curve with complex multiplication. Then \cite[Theorem 1.1]{Smith1} implies that in its quadratic twist family ordered by conductor, $50$ percent have $2$-adic Selmer ranks $0$ and $50$ percent have $2$-adic Selmer ranks $1$. Now by Theorem \ref{ThmM} (for rank $1$) and by the main result of \cite{BurungaleTian} (for rank $0$) for $p=2$, we get Goldfeld's conjecture for $E$.
\end{proof}
\subsection{Strategy and New Ideas}
\subsubsection{Strategy}
Now we discuss the difficulties in the proof and our ideas of the argument. The idea of proving the $p$-converse theorems via Perrin-Riou's conjecture already appeared in the second author's work with Castella \cite{CW}, which is also the main strategy here.
To show the analytic rank is $1$, it is enough to (see \cite[Introduction]{YZZ})
\begin{itemize}
\item produce a Rankin pair $(f,\chi/\CK)$ where the eigenform $f$ has weight 2 and has CM by $\CK$  such that the $L$-function $L(f_\CK\times\chi,s)$ splits up into two Hecke $L$-functions $L(\psi,s)$ and $L(\psi',s)$ with $L(\psi', 1)\not=0$;
\item show that the Heegner point $\kappa_{f,\chi}$ constructed from $(f,\chi)$ is not torsion.
\end{itemize}
Given these two facts, the Gross--Zagier formula applied to the
  Rankin pair $(f,\chi)$ implies $L'(\psi,1) \neq 0$, hence
  $\mathrm{ord}_{s=1}L(\psi,s) = 1$. The rest of the introduction
  explains how each step is achieved.

\subsubsection{Construction of $p$-adic $L$-function and $p$-adic Waldspurger formula}
In this paper we construct the anticyclotomic $p$-adic $L$-function and establish the $p$-adic Waldspurger formula for general Rankin pairs $(f,\chi/\CK)$ under the only assumption that the automorphic representation $\pi$ for $f$ is a principal series at $p$. In particular, we allow all ramification types of $p$ and the Shimura curve associated to $(f,\chi/\CK)$ to be general. Our construction also works for Eisenstein series and the resulting $p$-adic $L$-function is the Katz $p$-adic $L$-function for CM fields constructed in \cite{Katz} when $p$ is split. 

Our construction  largely follows the approach of Bertolini--Darmon--Prasanna \cite{BDP}, Liu--Zhang--Zhang \cite{LZZ18} and Andreatta--Iovita \cite{AI}, namely using the theory of $p$-adic iteration of Gauss-Manin connections. Our key new idea is to systematically combine representation theory, particularly the theory of local test vector at $p$ and the $p$-adic properties of CM points  to replace the explicit estimation of  convergence radii in \cite{AI}. Note that since the theory of $p$-adic iteration of Gauss--Manin connections  has been generalized to higher dimensional Shimura varieties (\cite{GPR25,Gra26} etc.) but only in some small neighborhood of the ordinary locus, our observation suggests that one may expect to have constructions of $p$-adic $L$-functions in general contexts without knowing the exact size of that small neighborhood. 

Now we briefly explain our new ingredients. Firstly, we use the theory of vector bundles with marked sections and marked splittings (VBMSS) to define the space of nearly overconvergent modular forms and use the theory of Serre--Tate expansions to establish the $p$-adic iteration of Gauss--Manin connections.  This idea appears in \cite{Mol21} and \cite{Kaz24}; our
  contribution is to make the theory fully explicit, including the
  convergence radii, and to supply the arguments needed for
  applications to anticyclotomic $p$-adic $L$-functions.  We note that using the VBMSS theory, the $p$-adic iteration of Gauss-Manin connections is locally analytic for all characters in the weight space of $\BZ_p^\times$. For more details, we refer to 
Section \ref{NOQMF}.

Secondly, the key geometric input is to choose the embedding $\mathcal{K}_p \hookrightarrow M_2(\mathbb{Q}_p)$
 and the local test vector $\varphi_p \in \pi_p$ so that every CM point, after translation by a specific matrix $g_{\ell,n} \in \mathrm{GL}_2(\mathbb{Q}_p)$, can have arbitrarily small Hasse invariant and  land in the half-canonical locus of the Shimura curve. Concretely, the half-canonical locus is the region of the Shimura curve with $p$-level structure
 $$\left\{\begin{pmatrix}a & b\\ c & d\end{pmatrix}\in\mathrm{GL}_2(\mathbb{Z}_p)\,\Big|\, a-1\equiv d-1\equiv 0\bmod p^n,\ p^{2n}\mid c\right\}$$
 where the level subgroup and the canonical subgroup at $p$ have intersection of rank at least $p^n$. For details, see Subsection \ref{Half canonical}.  This translation not only ensures  the convergence of the Gauss--Manin connection, but also guarantees that the local toric integral at $p$ is nonzero by the test-vector theory (Appendix \ref{Local test vector}). In the non-split case this test vector is described via the $\chi_p$-eigenvector in the Kirillov model of $\pi_p$ using Jacquet module theory, which requires $\pi_p$ to be a principal series. See Subsections \ref{CM points}, \ref{Half canonical} and Appendix \ref{Local test vector} for details. We remark that the introduction of the twisting element $g_{\ell,n}$ is inspired by the beautiful ideas in Cai--Shu--Tian \cite{CST} and Andreatta--Iovita \cite{AI}.

Finally, to establish the $p$-adic Waldspurger formula, 
we follow the approach in \cite{HW24} to use syntomic formalism with coefficients, which reduces the computation of the Abel--Jacobi map on the Shimura curve to much simpler computation on CM points via the adjunction formula. However for the generality exhibited in this paper, we need to handle the semi-stable reduction case. This relies heavily on the recent theory of Andreatta--Bertolini--Seveso--Venerucci \cite{ABSV}. Note that this theory only requires the semistable model of the Shimura curve. See Section \ref{Explicit reciprocity law} for more details.
\begin{remark}
Another approach to the $p$-adic Waldspurger formula uses finite polynomial cohomology of the Kuga--Sato variety, as in \cite{BDP}. However, this will require the semi-stable model of the Kuga--Sato variety, whose existence is unclear in the generality we need\footnote{We thank Brian Conrad for pointing this out.}.
\end{remark}

The $p$-adic Waldspurger formula will be used to compare the anticyclotomic $p$-adic $L$-function with the big Heegner classes in Appendix \ref{Heegner cycles in family} whose construction in the exhibited generality generalizes  \cite{JLZ} and \cite{Wal25}. Note that one uses the $p$-stabilized vector to vary Heegner cycles  in $p$-adic analytic families, which is different from the  local test vector in the construction of  the anticyclotomic $p$-adic $L$-function. For the comparison, we apply local multiplicity one to identify the defect factor, which depends only on the local representations at $p$.

\subsubsection{$\pm$-Iwasawa theory}
We develop two anticyclotomic Iwasawa theories. The first works
  over the ring of analytic functions on a small disc around the
  origin (varying Archimedean weights); the second works over a
  localization of the anticyclotomic Iwasawa algebra (varying finite
  order twists of $p$-power conductor). Both theories suffice for our purpose, since to prove the $p$-converse theorem we only need to study the local behavior near the origin. However
\begin{itemize}
\item The first theory is more flexible, and that the coefficient ring is precisely where the various $p$-adic $L$-functions live in, and we allow all primes including $2$.
\item The second theory has the advantage that by considering growth condition we are able to construct bounded Heegner family, making it enough to work with infinitely many finite order twists (thus no need to use Coleman families). It also only needs the $p$-adic Waldspurger formulae for sufficiently ramified characters. In view of \cite{AI} these can be done following the approach in \emph{loc.cit.} without much innovation. However due to lack of \cite[Proposition 2]{RohrlichRN} for $p=2$, we do not cover $p=2$ for this theory here. Nevertheless, we were informed that Burungale and He proved the analogue of \cite[Proposition 2]{RohrlichRN} in a work in progress. With this in hand, the theory for $p=2$ can be developed in a similar way.
\end{itemize}
The following table indicates the main difference between the two theories
\begin{center}
\renewcommand{\arraystretch}{1.6}
\begin{tabular}{|p{3.2cm}|p{5.4cm}|p{5.4cm}|}
\hline
& \textbf{First theory (small disc)}
& \textbf{Second theory (localization)} \\
\hline
\textbf{Base ring}
& $\mathcal{O}_L[[p^{-m}X]]\otimes L$
& $S^{-1}\Lambda^-_L$ \\
\hline
\textbf{Varies}
& Archimedean weights
& Finite order twists of $p$-power conductor \\
\hline
\textbf{$p$-adic $L$-function}
& Full generality required
& Sufficiently ramified only; follows \cite{AI} \\
\hline
\textbf{Covers $p=2$}
& Yes
& No (temporarily) \\
\hline
\textbf{Heegner family}
& \emph{Virtual}; via Coleman family \cite{JLZ}
& \emph{Bounded}; via half logarithms \cite{Pollack} \\
\hline
\textbf{Key input}
& Vanishing of one $p$-adic $L$-function
  via global root numbers
& Nonvanishing of $L$-values at half
  the arithmetic points \cite{Jia} \\
\hline
\multicolumn{3}{|p{14.0cm}|}{%
\emph{Both theories give the same $\pm$-submodules as
\cite{BKNO} (see Lemmas~\ref{characterize1},~\ref{characterize2}),
but on a smaller domain, with the key advantage of an explicit
connection to the $p$-adic Waldspurger formula.}} \\
\hline
\end{tabular}
\end{center}
Note that although the second approach does not need the full strength of our $p$-adic Waldspurger formulae above,  we believe the generality and the method developed in this article have their own importance. In a forthcoming work, we plan to use our result  to prove the refined BSD formula for CM elliptic curves in the rank $1$ case, where the $p$-adic Waldspurger formula in this generality will be indispensable.

Now we discuss the first Iwasawa theory with more details.  Let $\Lambda^-:=\mathcal{O}_L[[\Gamma_\CK^-]]\simeq\mathcal{O}_L[[X]]$ be the anticyclotomic Iwasawa algebra, where the variable $X$ is mapped to $\gamma^--1$ for a topological generator $\gamma^-\in\Gamma_\CK^-$.

Our method differs from those in the literature: we develop a new kind of $\pm$-local Iwasawa theory, which is valid for all ramification types for $p$ (split, inert and ramified cases). Our theory is analogous to Rubin's $\pm$-theory in format but quite different in nature: we divide the arithmetic points in $\Lambda^-$ into two parts $\mathfrak{X}_1$ and $\mathfrak{X}_2$ depending on the Archimedean types instead of the conductors at $p$. We define the $+$-part ($-$-part) submodules of rank $1$ of the local Iwasawa theoretic Galois cohomology at $p$ to be the subspace whose specializations lie  in $H^1_f$ (the kernel of the dual exponential map $\exp^*$) at arithmetic points in $\mathfrak{X}_1$ ($\mathfrak{X}_2$) respectively.  Our construction of the $\pm$-theory uses explicit reciprocity law for families of elliptic units for various auxiliary Hecke characters whose $L$-function has vanishing orders $0$ and $1$ respectively. To get the analogue of Rubin's conjecture we need to shrink the weight space (this suffices for our purpose because we are only focusing on the local analytic behavior near the origin point) --- this means we take some large number $m$ and define $\Lambda^{-,\prime}$ to be $\mathcal{O}_L[[p^{-m}X]]\otimes L$, whose spectrum parametrizes the open disc $|X|_p <
  p^m$ in weight space--- and then the analogous version of Rubin's conjecture in \cite{Rubin} becomes much easier to prove, i.e., we can easily prove that
$$H^1(\mathcal{K}_p, \boldsymbol{\psi})=H^1_+\oplus H^1_-$$
after shrinking the weight space (here $\boldsymbol{\psi}$ is the anticyclotomic deformation of $\psi$).

While we are writing up this version, Burungale-Kobayashi-Ota-Nakamura \cite{BKNO} developed similar $\pm$-theory, using an involution map constructed by Colmez. As shown in  Lemmas ~\ref{characterize1} and ~\ref{characterize2}, the $\pm$-submodules of our theory coincide with those of \cite{BKNO}, in our smaller defining domains. Their argument is purely local, and is valid over the anticyclotomic Iwasawa algebra $\Lambda^-$, thus is stronger than our theory in this aspect. A $p$-adic $L$-function is constructed upon choosing some basis  of the $\pm$-module, and involves the specialization formula for this chosen basis. However our construction, together with the explicit interpolation formula, makes it more conveniently related to the $p$-adic Waldspurger formula, and thus to Iwasawa theory of Heegner points. This is crucial for our purpose.

\subsubsection{Heegner family}

Perrin-Riou's conjecture requires the construction of a family of Heegner points (it states that the index of the Heegner family in certain Iwasawa cohomology module gives the characteristic ideal of the torsion part of the corresponding Selmer group). In applying our first theory over the small disc, we define a so-called ``virtual Heegner family'' $\kappa^{\mathrm{virt}}_{f,\boldsymbol{\chi}}$ which means an element in the global cohomology module $H^1_+(\mathcal{K}, \boldsymbol{\psi})$ (a rank $1$ module) whose localization at $p$ satisfies the explicit reciprocity law expected for the family of Heegner cycles, and whose specialization to $\phi_0$ is $\kappa_{f,\chi}$, where $\phi_0$ is the origin point in the weight space where $\boldsymbol{\chi}$ specializes to $\chi$. Then this family can play the role of Heegner family in \cite{WAN} and \cite{CW} in the argument.

Note that Rubin's construction of the $\pm$ Heegner family uses crucially that the prime $p$ is unramified in $\mathcal{K}$ and $E$ has prime to $p$ conductor. To display a virtual Heegner family, we use a different idea -- we consider a Coleman family $\mathcal{F}$ passing through $f$ (note that a generic member of $\mathcal{F}$ is not CM), and bring in the construction of Jetchev-Loeffler-Zerbes \cite{JLZ} for the two-variable Heegner families over the Coleman family $\mathcal{F}$. The reason to invoke Coleman families is that there are not  enough arithmetic points on the $1$-dimensional small disc in the fiber  of $\mathcal{F}$ at $f$. Note that the CM form associated to $\psi$ is supercuspidal at $p$. Thus in order to make this work our idea is to take a form corresponding to a CM character unramified at $p$ so that the CM form has finite slope, and move the ramification of $\psi$ at $p$ to the CM character $\chi$ part. (The idea of making appropriate choice of the pair $(f,\chi)$ to meet various purposes is already employed by Burungale--Tian in \cite{BT}.)

In applying our second theory over the localization of the Iwasawa algebra, we produce a bounded Heegner family, using generalizations of Pollack's theory \cite{Pollack}. We define various kinds of half $p$-adic logarithms generalizing those of Pollack. Using results of \cite{Jia} that the values of the corresponding $L$-values are nonzero at half of the arithmetic points, we show that the specializations of the Heegner family associated to a stabilization $(f_\alpha,\chi)$ at half of the finite order anticyclotomic twist points are $0$, when projecting to the $\psi$-component. We show that the anticyclotomic Iwasawa cohomology of $\psi$ is free of rank $1$, and using the growth property of the Heegner family, we show that the $\psi$-component can be divided by this half $p$-adic logarithm and obtain a bounded family over the localization of the Iwasawa algebra. Unlike the bounded $\pm$-Heegner family in \cite{Rubin} and in \cite{CW} which are constructed from the local norm relation at $p$ for un-stabilized $f$, our construction of the bounded family relies on a global argument.
\subsubsection{Regulator maps -- the principle of taking quotients}
When $p$ is split in $\mathcal{K}$ we have a well-understood local regulator map which is important for carrying out the Iwasawa theory argument. Equivalently this means having  certain basis of various local Iwasawa modules with explicit interpolation formulas. When $p$ is non-split, we do not have a good understanding of such regulator map. 
\begin{itemize}
\item The key principle is: instead of constructing an explicit regulator map, we work with ratios between elliptic unit families (or Heegner point families) we study and those for auxiliary characters whose $L$-functions have vanishing orders $0$ or $1$.
\end{itemize}

The advantage is, although the interpolation formula for an abstract basis of the local Iwasawa cohomology is rather mysterious, the ratios between elliptic units (or Heegner point families) for the character $\psi$, and those for the auxiliary characters, are far more tractable and enable us to prove various relations between elliptic units and Heegner families (see e.g. Proposition \ref{bddHeegnerfamily}).

One difficulty is to show that these families for auxiliary characters indeed form a local basis over appropriate localizations. Lemma \ref{AV} plays an important role, which says that the localization map from the Mordell-Weil group of a $\mathrm{GL}_2$-type abelian variety to the local Galois cohomology at $p$ of its $\mathfrak{p}$-adic Tate module (where $\mathfrak{p}$ is any prime of its Hecke field above $p$) is injective. It is clearly true for elliptic curves over $\mathbb{Q}$, but our argument requires knowing it for general $\mathrm{GL}_2$-type abelian varieties. The lemma is proved by the second-named author with Burungale and Skinner. The proof is an application of the $p$-adic analytic subgroup theorem, which was  a significant breakthrough in transcendental number theory.
\subsubsection{Applying the $p$-adic Waldspurger formula}
To show that the Heegner family can be made to run our Iwasawa theory argument, we use our $p$-adic Waldspurger formula. However, one subtlety is that the local images of the Heegner cycles at $p$ actually involve two $p$-adic $L$-functions, depending on the choice of an embedding $\iota: \bar{\mathbb{Q}}\hookrightarrow \mathbb{C}_p$. In the simpler case where $p$ splits in $\mathcal{K}$, these correspond to the localizations at the two primes above $p$. To do Iwasawa theory we only need to consider one of them. However when $p$ is non-split in $\mathcal{K}$, we are unable to separate the two components. Thus the actual argument is more convoluted. A crucial observation to make the argument work is that when $f$ is CM, one of the two Rankin-Selberg $p$-adic $L$-functions  is identically zero. Indeed, one of them is the product of two CM $p$-adic $L$-functions with global root numbers $+1$, and the other is the product of two CM $p$-adic $L$-functions with global root numbers $-1$, and thus this second one must be identically zero. Using this observation, we can take some appropriate auxiliary anticyclotomic twists $\chi\eta$ and $\chi\eta'$ of $\chi$ and form a linear combination of the Heegner families associated to $(\mathcal{F},\chi\eta)$ and $(\mathcal{F},\chi\eta')$ which specializes under $\mathcal{F}\mapsto f$ to a generator of $H^1_+$ at $p$, which we denote as $v_+$, such that $\kappa_{f,\boldsymbol{\chi}}=\mathcal{L}_{f,\chi}\cdot v_+$. This can be proved by applying the $p$-adic Waldspurger formula at a Zariski dense set of arithmetic points. This provides the virtual Heegner family which is enough for the Iwasawa theory argument.

In applying our second theory, this issue is simpler -- since we just work with one CM form instead of a Coleman family, the theory of complex multiplication enables us to split the two components with respect to the $2$ characters.  See Section \ref{VHC} for more details.

\subsubsection{Iwasawa main conjecture and Perrin-Riou conjecture}
The final step uses the Iwasawa main conjecture and a global duality argument
to show that the virtual Heegner family is non-torsion at the origin.

The $p$-adic $L$-function $\mathcal{L}^{\bullet}_\psi$ interpolates the algebraic
parts of $L(\boldsymbol{\psi}_\phi, 1)$ for arithmetic points $\phi$ in
$\mathfrak{X}_2$.  The Iwasawa main conjecture for it asserts
\[
  \mathrm{char}_{\Lambda^{-,\prime}_L}(X^-_\psi)
  \;=\;
  \bigl(\mathcal{L}^{\bullet}_\psi\bigr).
\]
This is deduced from the main conjecture for elliptic units, proved by Rubin
\cite{Rubin2} and in full generality by Johnson-Leung-Kings \cite{JK}.

With the main conjecture established, the Poitou-Tate exact sequences
\[
  0 \to H^1_+(\mathcal{K}^S, \boldsymbol{\psi})
    \to H^1_+(\mathcal{K}_p, \boldsymbol{\psi})
              \to X^{\mathrm{rel}}_\psi
    \to X^{\mathrm{+}}_\psi
    \to 0
\]
and
\[
  0 \to H^1(\mathcal{K}^S, \boldsymbol{\psi})
    \to \frac{H^1(\mathcal{K}_p, \boldsymbol{\psi})}
             {H^1_-(\mathcal{K}_p, \boldsymbol{\psi})}
    \to X^-_\psi
    \to X^{\mathrm{str}}_\psi
    \to 0
\]
(here the $X^{\pm}_\psi$, $X^{\mathrm{rel}}_\psi$ and $X^{\mathrm{str}}_\psi$ are the Pontryagin duals of the $\pm$, relaxed and strict Selmer groups respectively, see Definition \ref{defineSelmer} and \ref{defineSelmer1}) relates $\mathrm{char}(X^-_\psi)$, the torsion part of $\mathrm{char}(X^+_\psi)$, and the
cokernel of the global-to-local map for $\boldsymbol{\psi}$.  Combining
with the main conjecture gives an analogue of Perrin-Riou's conjecture for
the virtual Heegner family $\kappa^{\mathrm{virt}}$:
\[
  2\,\mathrm{length}_P\!\left(
    \frac{H^1_+(\mathcal{K}, \boldsymbol{\psi})}
         {\Lambda^{-,\prime}_L\,\kappa^{\mathrm{virt}}}
  \right)
  \;=\;
  \mathrm{length}_P\!\left(X^+_{\psi,\mathrm{tor}}\right),
\]
where $P = (X)$ is the height-one prime corresponding to the origin $\phi_0$.

Under the hypothesis that the Selmer rank of $\psi$ is $1$, the Pontryagin
dual $X^+_\psi$ has $\Lambda^{-,\prime}_L$-rank $1$, so its torsion part
$X^+_{\psi,\mathrm{tor}}$ contributes trivially at $P$.  The Perrin-Riou
analogue then forces the left-hand cokernel to vanish at $P$, meaning that
$\kappa^{\mathrm{virt}}$ generates $H^1_+(\mathcal{K}, \boldsymbol{\psi})$
at $P$.  In particular, its specialization at $\phi_0$, which is by
construction the Heegner point $\kappa_{f,\chi}$, is non-torsion.  The
Gross--Zagier and Kolyvagin theorem then implies that the vanishing order of
$L(\psi, s)$ at $s = 1$ is exactly $1$.

\subsection{Notations and conventions}\label{NC}
We conclude the introduction by fixing some notations which will be in force throughout the paper. Fix a prime number $p$. Fix an algebraic closure $\bar{\BQ}\subset \BC$. Fix an isomorphism $\BC\cong \BC_p$, which induces an injection $\iota: \bar{\mathbb{Q}}\hookrightarrow \mathbb{C}_p$.

Fix an indefinite quaternion algebra $B$ over $\BQ$ with main involution $c$ and a maximal order $\CO_B$. Assume {\bf $B$ is split at $p$} and fix an isomorphism $B_p\cong{M_2(\BQ_p)}$ such that $\CO_{B,p}\cong M_2(\BZ_p)$. Fix an isomorphism $B_\BR\cong M_2(\BR)$.

Fix an imaginary quadratic field $\CK\subset \bar{\BQ}$ over $\BQ$ with non-trivial automorphism $c$ and associated quadratic character $\chi_{\CK}$. Let $\iota^c=\iota\circ \tilde{c}: \bar{\mathbb{Q}}\hookrightarrow \mathbb{C}_p$ where $\tilde{c}:\ \bar{\BQ}\to\bar{\BQ}$ is some involution such that $\tilde{c}|_{\CK}=c$; in particular, $\iota^c$ restricts to $\iota\circ c$ on $\CK$.
For any $d\in \BN$, let $\CO_d:=\BZ+d\CO_{\CK}$. Assume $B=\CK\oplus \CK J$ for some $J\in B^\times$ such that $Jt=t^c J$ for all $t\in \CK$ and $J^2\in \BQ^\times$. When $p$ is inert (resp. ramified) in $\CK$, take $\tau_0\in \CO_{\CK_p}$ with $p$-adic valuation $0$ (resp. $1/2$)  such that $\CO_{\CK_p}=\BZ_p[\tau_0]$. When $p$ is split in $\CK$, fix an isomorphism $\CK_p\cong\BQ_p\oplus \BQ_p$ and take $\tau_0=(1,0)$.
Assume that the induced local embeddings at $p$ and $\infty$ are
\[\CK_p\to M_2(\BQ_p),\quad a+b\tau_0\mapsto \begin{pmatrix}a & -b\\ bN_{\CK_p/\BQ_p}(\tau_0) & a+b\RTr_{\CK_p/\BQ_p}(\tau_0) \end{pmatrix}\]
\[\BC\to M_2(\BR),\quad re^{i\theta}\mapsto r\begin{pmatrix} \cos\theta & \sin\theta \\ -\sin\theta & \cos\theta\end{pmatrix}.\]

\s{\bf Coefficient fields.} Let $\mathbb{Q}_p\subset L\subset \BC_p$ be a finite coefficient field with ring of integers $\mathcal{O}_L$ and residue field $k_L$. We will also denote the induced embedding $\CK\to L$ by $\iota$.

\s{\bf Hecke characters.} We shall denote a $p$-adic Hecke character and its $\infty$-avatar by the same symbol. When necessary, we shall put the superscript $(p)$ and $(\infty)$ to distinguish them. A Hecke character $\mu$ of $\CK$ is said to have \emph{infinity type} $(a,b)$ if $\mu(z)=z^a\bar{z}^b$ for $z\in \CK_\infty^\times\cong \BC^\times$.

\s{\bf Field extensions, Galois groups and Iwasawa algebras.}  Let $\mathcal{K}_\infty^-$ be the anticyclotomic $\mathbb{Z}_p$-extension of $\mathcal{K}$ and $\Gamma_\CK^-:=\mathrm{Gal}(\mathcal{K}_\infty^-/\mathcal{K})\simeq \mathbb{Z}_p$. Let $\mathcal{K}^-_n$ be the sub-extension of $\mathcal{K}_\infty^-$ of index $p^n$ over $\mathcal{K}$. Let $\Lambda^-=\CO_L[[\Gamma_\CK^-]]$ be the Iwasawa algebra and $\Lambda_L^-=\Lambda^-\otimes L$.

Let $\mathcal{K}_\infty$ be the  $\mathbb{Z}^2_p$-extension of $\mathcal{K}$ and $\Gamma_\CK:=\mathrm{Gal}(\mathcal{K}_\infty/\mathcal{K})\simeq \mathbb{Z}^2_p$. Let $\Lambda=\CO_L[[\Gamma_\CK]]$ be the Iwasawa algebra and $\Lambda_L=\Lambda\otimes L$.

\s{\bf Adelic norms.} Let $\BA$, $\BA_\CK$ and $\BB$ be the adeles of $\BQ$, $\CK$ and $B$ respectively. Let $|\cdot|$ be the norm map on $\BQ^\times\bs\BA^\times$ and $|\cdot|_\CK=|N_{\CK/\BQ}(-)|$ be the norm map on $\CK^\times\bs \BA_\CK^\times$.

For any Hecke character $\mu$ (resp. $\xi$) on $\CK$ (resp. $\BQ$) and any $a\in\BZ$, set $\mu_a=\mu|\cdot|_\CK^a$ (resp. $\xi_a=\xi|\cdot|^a$).

\s{\bf Level subgroups.} For any $1\leq n\in\BN$, let \[U_1(n)=\begin{pmatrix} \BZ_p^\times & \BZ_p\\ p^n\BZ_p & 1+p^n\BZ_p\end{pmatrix}\subset U_0(n)=\begin{pmatrix} \BZ_p^\times & \BZ_p\\ p^n\BZ_p & \BZ_p^\times\end{pmatrix}.\]
By convention, set $U_1(0)=U_0(0)=\GL_2(\BZ_p)$.

\s{\bf Special elements.} For $(\ell,n,u)\in \BN\times\BN\times\BZ_p^\times$,  set $g_{\ell,n,u}=\begin{pmatrix} p^{2n+1-\ell-s} & p^{-\ell-s+n+1}u\\ 0 & 1\end{pmatrix}$. Here $s=1$ (resp. $s=2$) when $p$ is inert (resp. ramified) in $\CK$. When $u=1$, we simply denote $g_{\ell,n,u}$ by $g_{\ell,n}$.

\s{\bf Roots of unity.} Fix a compatible system $\{\zeta_{p^n}\}$ of primitive $p^n$-th roots of unity.

\s{\bf Haar measures.} Let $d^\times t$ be the Haar measure on $\BA^{\infty,\times} \CK^\times\bs \BA_{\CK}^{\infty,\times}$ with total volume one.

\noindent\underline{Acknowledgement}: We thank Fabrizio Andreatta, Christophe Cornut, Ming-Lun Hsieh, David Loeffler, David Rohrlich and Ye Tian for useful communications.

\section{Nearly overconvergent quaternionic modular forms and $p$-adic iteration of Gauss-Manin connections}\label{NOQMF}
  \subsection{Shimura curves}
For any (small enough) open compact subgroup 
$U\subset \BB^{\infty,\times}$, the locally symmetric space $B^\times\backslash(\BC-\BR)\times\BB^{\infty,\times}/U$  admits a canonical model  $Y_U$ over $\BQ$. It is well-known (\cite{Bra13}) that $Y_U$ solves the moduli problem which sends a $\BQ$-scheme $S$ to the isomorphism classes of triples $(A,i,\bar{\eta})$ over $S$ where 
\begin{itemize}
\item $A$ is an abelian scheme over $S$ of relative dimension $2$;
\item $i:\ \CO_B\hookrightarrow\End_S(A)$ is an injective homomorphism;
\item $\bar{\eta}$ is the $U$-orbit of $\CO_B$-linear isomorphism $\eta:\ \CO_B\otimes_{\BZ}\hat{\BZ}\to{T}(A):=\prod_{\ell}T_\ell(A)$ (locally in the \'etale topology).
\end{itemize}
 The compactification $X_U$ of $Y_U$ is a smooth proper curve over $\BQ$.  When $B\neq M_2(\BQ)$, $X_U=Y_U$  and when $B=M_2(\BQ)$,  $X_U=Y_U\sqcup\{\text{cusps}\}$.

Let $\pr:\ \CA\to Y_U$ be the universal abelian surface. Take $t\in\CO_B$ such that $t^2=-\disc(B)$. According to \cite[Page 592]{Buz97},
 there exists a unique principal polarization $$\lambda_\CA:\ \CA\to{\CA^\vee}$$ such that for any geometric point $\bar{s}\in{S}$, the Rosati involution on $\End(\CA_{\bar{s}})$ is compatible with $$-^\dagger: \ B\to{B},\quad x\mapsto{t^{-1}\bar{x}t}.$$
Take a non-trivial idempotent $e\in\CO_{B,p}$  such that $e^\dagger=e$.
 When $B=M_2(\BQ)$ and $\CO_B=M_2(\BZ)$,  we can take $e=\begin{pmatrix}1 & 0 \\ 0 & 0 \end{pmatrix}\in \CO_B$. In this case,  one recovers  the usual description of $X_U$ in terms of elliptic curves by considering $e\CA$.

Fix a small enough tame level $U^p$. For $U=U^pU_p$ with $U_p=U_0(n)$ or $U_1(n)$, denote $X_U$  by $X_0(n)$ and $X_1(n)$ respectively. In the case $U_p=U_0(n)$ (resp. $U_1(n)$), the following are equivalent:
\begin{itemize}
\item a $U$-orbit of level structure $\bar{\eta}$,
\item a $U^p$-orbit of  away-from-$p$ level structure $\eta^p:\ \CO_B\otimes_{\BZ}\hat{\BZ}^p\to{T}^{p}(A):=\prod_{\ell\neq{p}}T_\ell(A)$ together with a cyclic subgroup $D\subset e A[p^n]$ of order $p^n$(resp. a point $Q\in e A[p^n]$ of exact order $p^n$);
\item a $U^p$-orbit of  away-from-$p$ level structure $\eta^p$ and a cyclic $\CO_{B,p}$-submodule $H\subset A[p^n]$ of order $p^{2n}$(resp. a point $Q\in  A[p^n]$ of exact order $p^n$).
\end{itemize}
We shall refer to $H$ as the level subgroup (resp. submodule). Note that when $n=0$, the moduli problem for $X=X_0(0)=X_1(0)$  imposes no level structure at $p$.

 \subsection{de Rham vector bundle and Hodge line bundle}\label{deRhamhodge}

According to \cite{Car86}, 
$Y_{U^p\GL_2(\BZ_p)}$ has a canonical  smooth model $\fY$ over $\BZ_{(p)}$ solving the same moduli problem, which is proper when $B\neq M_2(\BQ)$.  Let $\pr:\ \CA\to\fY$ be the universal abelian surface and  $\CG=e\CA[p^\infty]$ be  the $p$-divisible group of dimension $1$, height $2$  cut out  by $1-e$. Then (see \cite[Section 2.4]{LZZ18})
$$\uomega:=e\pr_*\Omega_{\CA/\fY}^1,\quad \BH:=eR^1\pr_*\Omega_{\CA/\fY}^\bullet$$
are locally free $\CO_\fY$-modules of rank one and two respectively. Note that over the $p$-adic completion $\hat{\fY}$ of $\fY$,  $\uomega$ (resp. $\BH$)  coincides with the sheaf of invariant differentials of $\CG$ (resp.  the universal vector extension of $\CG$). Moreover, 
\begin{itemize}
    \item the canonical isomorphism of $\CO_B^{\op}$-sheaves $R^1\pr_*\CO_\CA\cong\Lie{\CA^\vee}$ together with the principal polarization $\lambda_\CA$ gives an isomorphism
$\uomega^{-1}\cong{eR^1\pr_*\CO_\CA}$ of $\CO_\fY$-sheaves;
\item the  Hodge-to-de Rham exact sequence 
$$0\to\pr_*\Omega_{\CA/\fY}^1\to{R^1\pr_*\Omega_{\CA/\fY}^\bullet}\to{R^1\pr_*\CO_\CA}\to0$$
induces an exact sequence of $\CO_\fY$-modules
\begin{equation*}
 0\to\uomega\to{\BH}\to\uomega^{-1}\to0;
\end{equation*}
\item the Gauss-Manin connection  $$\nabla:\ {R^1\pr_*\Omega_{\CA/\fY}^\bullet}\to{R^1\pr_*\Omega_{\CA/\fY}^\bullet}\otimes_{\CO_\fY}\Omega_{\fY/\BZ_p}^1$$
gives an integrable connection $\nabla:\ \BH\to{\BH\otimes_{\CO_\fY}\Omega_{\fY/\BZ_p}^1}$;
\item  the Kodaira-Spencer map
$$\pr_*\Omega_{\CA/\fY}^1\to{R^1\pr_*\CO_{\CA}\otimes_{\CO_{\fY}}\Omega_{\fY/\BZ_p}^1}$$
induces an isomorphism \begin{equation*}
 \KS:\ \uomega^{\otimes2}\cong\Omega_{\fY/\BZ_p}^1.
\end{equation*}
The isomorphism $eR^1\pr_*\CO_\CA\cong \uomega^{-1}$ shows that Kodaira-Spencer isomorphism is Hecke-equivariant up to twist by $|\det|$, where $\det$ is the reduced norm map on $B^\times$.
\end{itemize}

When $B=M_2(\BQ)$, let $\fX=\fY\cup\{\text{cusps}\}$ be the compactification of $\fY$, which is also smooth. It is well-known that  $\uomega$ and $\BH$ extend uniquely to $\fX$ and fit into the Hodge-to-de Rham exact sequence. Moreover the Gauss-Manin connection extends to an integrable connection $\nabla:\ \BH\to{\BH\otimes_{\CO_\fX}\Omega_{\fX/\BZ_p}^1(\log(\text{cusps}))}$ and the Kodaira-Spencer map extends to  an isomorphism $\uomega^{\otimes2}\cong\Omega_{\fX/\BZ_p}^1(\log(\text{cusps}))$. As we will work away from the cusps, we will sometimes omit  $\log(\text{cusps})$ in notations.

 \subsection{Strict neighborhood and Partial Igusa tower}

For $*=\uomega$, $\BH$, $\CG$, $\fX$, let $\bar{*}$ be  the mod-$p$ reduction of  $*$. The Verschiebung map $V:\ \bar{\CG}^{(p)}\to\bar{\CG}$ induces a section $\Ha\in{H}^0(\bar{\fX}, \bar{\uomega}^{\otimes(p-1)})$,  the \dfn{Hasse invariant}. It is known that $\Ha$ has simple zeroes over $\bar{\fX}$ (see \cite[Lemma 5.2]{Kas99}).
The \dfn{Hodge ideal} $\Hdg\subset\CO_{\hat{\fX}}$ of $\CG$ is the inverse image of $\uomega^{\otimes(1-p)}\Ha\subset\CO_{\bar{\fX}}$.
Zariski locally, the ideal $\Hdg$ is generated by two elements and if $p\in\Hdg^2$,  $\Hdg$ is invertible (see \cite[Lemma A.1]{AIP18}).

Take $2\leq r\in\BN$ and let $\fX(r)\to\fX$ be the functor sending a $p$-adically complete $\BZ_p$-algebra $S$ to the equivalence classes of pairs $(f,u)$ where $f:\Spf(S)\to \fX$ is a map and $u\in H^0(\Spf(S), f^*(\uomega^{\otimes{(1-p)r}}))$ is a section such that $u\bar{f}^*(\Ha)^r=p \mod p^{2}$.
Here two pairs $(f,u)\sim (f^\prime, u^\prime)$ are equivalent if $f=f^\prime$ and there exists $h\in S$ such that $u=(1+ph)u^\prime$. By \cite[Proposition 3.1]{AIP18} and  \cite[Lemma 3.4]{AIP18} , the functor $\fX(r)$ is representable by a formal {\em normal} scheme which is flat over $\BZ_p$.  Locally on $\Spf(R)\subset \fX$ such that $\Ha$ admits a lift, also denoted by $\Ha$, one has \[\fX(r)\times_{\fX}\Spf(R)\cong \Spf\left(R\langle u\rangle/(u\Ha^r-p)\right)\]
The adic generic fiber $\CX(r)\subset \CX$ is the open subset on which $p|\Ha|^r\geq 1$. 

By \cite[Appendix A]{AIP18}, the $p$-divisible group $\CG$ over $\fX(r)$ admits the level-$n$ canonical subgroup $\CC_n\subset \CG[p^n]$ when $r\geq (p+1)p^{n-2}$. In particular, the group $\CC_n=\ker F^n\ \mod p\Hdg^{-\frac{p^n-1}{p-1}}$. 
On the adic generic fiber $\CX(r)$,  $\CC_n^D$ is \'etale and locally isomorphic to $\BZ/p^n\BZ$.
Let $\CIG_{n}(r)$ be the adic space over $\CX(r)$ parameterizing all trivializations $\CC_n^D\cong\BZ/p^n\BZ$. Then $\CIG_{n}(r)$ is Galois over $\CX(r)$ with Galois group isomorphic to $(\BZ/p^n\BZ)^\times$.  By \cite[Lemma 3.2]{AIP18}, the normalization $\fIG_{n}(r)$ of $\CIG_{n}(r)$ over $\fX(r)$ is well-defined.  By \cite[Corollary A.3]{AIP18},  there exists an ideal $\udelta$ over $\fIG_1(r)$ such that $\udelta^{p-1}=\Hdg$. 

  The functor sending a $p$-adically complete $\BZ_p$-algebra $S$ to the set of morphisms $f:\ \Spf(S)\to\fX$ such that $f^*(\Hdg)=S$ is represented by the open formal subscheme $\fX^{\ord}\subset \fX$ on which (any local lift) $\Ha$ is invertible. We can similarly define $\fIG_n^{\ord}$.  The following result measures the difference between the strict neighborhood $\fX(r)$ and $\fX^{\ord}$.
\begin{lem}\label{cokernel}\cite[Lemma 3.4]{AI21}  Denote the natural map $\fX^{\ord}\to \fX(r)$ (resp. $\fIG_n^{\ord}\to \fIG_n(r)$) by $j$.   Then for any $h\in\BN$, the kernel of the maps \[j:\ \CO_{\fX(r)}/p^{h}\CO_{\fX(r)}\to \CO_{\fX^\ord}/p^{h}\CO_{\fX^\ord};\quad j:\ \CO_{\fIG_n(r)}/p^{h}\CO_{\fIG_n(r)}\to \CO_{\fIG_n^\ord}/p^{h}\CO_{\fIG_n^\ord} \]
are annihilated by $\Hdg^{hr}$ and $\udelta^{hr(p-1)+p^n-p}$ respectively.

\end{lem}
For later applications to estimate the convergence radii of $p$-adic iteration of Gauss-Manin connections, we also need to measure relative differentials.
\begin{lem}\label{relative differential}Let $\pr: \fIG_i(r)\to\fX$ be the structure map. Then the cokernel of the induced map 
$\pr^*(\Omega_{\fX/\BZ_p}^1)\to \Omega^1_{\fIG_i(r)}$ is annihilated by  $\udelta^{p^i-1}$.
\end{lem}
\begin{proof}The structure map $\pr$ is the composition of 
\[\fIG_i(r)\xrightarrow{p_1} \fIG_1(r)\xrightarrow{p_2} \fX\]
Note that $\fIG_{i}(r)$ is the normalization of $\fIG_{1}(r)\times_{\CC_1^D}\CC_i^D$ and by \cite[Proposition 2.2.17]{Fan}, $\mathcal{D}(\CC_i^D/\CC_1^D)=\udelta^{p^i-p}\CO_{\CC_i}$. 
Thus by the relation between different ideal and differential forms, the cokernel of \[p_1^*(\Omega_{\fIG_1(r)/\BZ_p}^1)\to \Omega_{\fIG_i(r)/\BZ_p}^1\] is annihilated by $\udelta^{p^i-p}$. Locally on $\Spf(R)\subset \fX$ such that $\Ha$ admits a lift, also denoted by $\Ha$, one has \[\fIG_1(r)\times_{\fX}\Spf(R)\cong \Spf\left(R\langle u,v\rangle/(u\Ha^r-p, v^{p-1}-\Ha)\right)\]
This explicit description implies the cokernel of $p_2^*(\Omega_{\fX/\BZ_p}^1)\to \Omega_{\fIG_1(r)/\BZ_p}^1$ is annihilated by $\Hdg$. Combining these together, we obtain the desired results.
\end{proof}

  \subsection{Vector bundles with marked sections and marked splittings}       
 \begin{defn}Take $(r,i)\in\BN^2$ such that $r\geq \frac{p^i}{p-1}(>(p+1)p^{i-2})$. Let $\gamma\in{\CC_i^D(\fIG_{i}(r))}$ be the universal section. Let $\Omega=\udelta \uomega\subset\uomega$ be the $\CO_{\fIG_{i}(r)}$-submodule generated by any lift of $s_{\HT}=\HT(\gamma)$ where $\HT$ is the Hodge-Tate map $$\HT:\ \CC_i^D(\fIG_{i}(r))\to\frac{\uomega}{p^i\uomega}.$$
  Let $\BH^\sharp=\Omega+\udelta^p\BH$ and   $\CI=p^i\Hdg^{-\frac{p^i}{p-1}}$. 
  \end{defn}
\begin{lem}\label{Canonical Splitting} Over $\fIG_i(r)$,  the  modified Hodge-to-Tate exact sequence
 \[0\to \Omega\to\BH^\sharp\to \udelta^p\uomega^{-1}\to0.\]
 admits a unique splitting modulo $\CI$ \[\BH^\sharp \mod \CI = \Omega \mod \CI \oplus Q\]
extending the unit-root splitting on $\fIG_i^{\ord}$. Moreover, let $\BH_i^\sharp$ be the modified de Rham sheaf associated to $\CG/\CC_i$ and $f_i:\ \CG\to \CG/\CC_i$
be the quotient isogeny. Then $\udelta^{-p^i}f_i^*(\BH_i^\sharp)\subset \BH^\sharp$ and one can check $\udelta^{-p^i}f_i^*(\BH_i^\sharp) \mod \CI\subset Q$.
\end{lem}
\begin{proof} As $\BH^\sharp \mod \CI$ is locally free of rank two, there can exist at most one splitting on $\fIG_i(r)$ extending the unit-root splitting.  Take    any lift $s_0$ of $s_{\HT}$ and any generator $D$ of the space of derivative on $\fX$. Then  by \cite[Lemma 5.1]{Mol21},
 the subspace $Q:=\Hdg (\nabla(D)(s_0)) (\mod \CI)$ induces the desired splitting 
 \[\BH^\sharp \mod \CI=\langle s_{\HT} \rangle \oplus Q\] 
To show  $\udelta^{-p^i}f_i^*(\BH_i^\sharp)\subset \BH^\sharp$, we apply \cite[Lemma A.4]{AI21} to $f_i^\vee$ and use the  fact $f_i\circ f_i^\vee=p^i$.  One checks directly $\udelta^{-p^i}f_i^*(\BH_i^\sharp) \mod \CI\ \bigcap\ \Omega \mod \CI=0$ and deduces $\udelta^{-p^i}f_i^*(\BH_i^\sharp) \mod \CI\subset Q$ from the uniqueness.
 \end{proof}
Now apply the machinery of vector bundles with marked sections and marked splittings (see \cite[Section 4.2]{Mol21}) to the vector bundle  $\BH^\sharp$ with the marked section $s_{\HT}$ and marked splitting $Q$ over $\fIG_{i}(r)$. One obtains the formal scheme $\CV_{0,Q}(\BH^\sharp,s_{\HT})$ over $\fIG_{i}(r)$ such that for any $\fIG_i(r)$-scheme $t:\ T\to\fIG_i(r)$,  $$\CV_{0,Q}(\BH^\sharp,s_{\HT})(T)=
   \{v\in{H^0(T,t^*(\BH^\sharp)^\vee)}|\ \bar{v}(t^*(s_{\HT}))\equiv1,\ \bar{v}(t^*(Q))\equiv0 \mod \CI \}$$
and the formal scheme $V_Q(\BH^\sharp,s_{\HT})$ (see \cite[Section 4.4]{Mol21}) such that for any $\fX(r)$-scheme $T$,
  $$V_Q(\BH^\sharp,s_{\HT})(T)=\{(\rho,v)\in\fIG_{i}(r)(T)\times{H^0(T, \rho^*(\BH^\sharp)^\vee)}\mid \bar{v}(\rho^*(s_{\HT}))\equiv 1,\ \bar{v}(\rho^*(Q))\equiv 0 \mod \CI\}.$$
 The extended torus $\CT^{\ext}:=\BZ_p^\times(1+\CI{\BG_a})$ acts on $\CV_Q(\BH^\sharp,s_{\HT})$ by the rule: for any $(\rho,v)\in\CV_Q(\BH^\sharp,s_{\HT})(T)$,
  \begin{itemize}
  \item  for any $\lambda\in{\CT:=1+\CI{\BG_a}}$, $\lambda\ast(\rho,v):=(\rho,\lambda^{-1}v)$,
  \item for any $\lambda\in\BZ_p^\times$,
$\lambda\ast(\rho,v):=(\bar{\lambda}\circ\rho, \lambda^{-1}v\circ\gamma_\lambda^{-1})$
with $\gamma_\lambda:\ \BH^\sharp\cong \bar{\lambda}^*\BH^\sharp$  characterized by $$\gamma_\lambda(s)\equiv\lambda^{-1}s \mod \CI.$$
\end{itemize}
Let $\CT^{\ext}$ act on functions via pull-back, i.e., $(t\ast{f})(\rho,v)=f(t\ast(\rho,v))$.   Let $V(\Omega,s_{\HT})$ be the formal scheme over $\fX(r)$ such that
for any $\fX(r)$-scheme $T$,
  $$V(\Omega,s_{\HT})(T)=\{(\rho,v)\in\fIG_{i}(r)(T)\times{H^0(T, \rho^*(\Omega)^\vee)}\mid \bar{v}(\rho^*(s))\equiv 1 \mod \CI\}.$$
Then by \cite[Section 4.2.1]{Mol21}, the natural map $V(\Omega,s_{\HT})\to V_Q(\BH^\sharp,s_{\HT})$ is $\CT^{\ext}$-equivariant and $\CO_{\CV_Q(\BH^\sharp,s_{\HT})}$ is equipped with a canonical filtration $\Fil_\bullet$  preserved by  the $\CT^\ext$-action such that  $\Fil_0=\CO_{\CV(\Omega,s_{\HT})}$.

\subsection{Nearly overconvergent quaternionic modular forms} \label{NO}
In this subsection, we introduce  nearly overconvergent quaternionic modular forms following \cite{AI21}.   Again assume $r\geq \frac{p^i}{p-1}$  and consider the partial Igusa tower $\fIG_i(r)$. Take a  $p$-adically complete and separated $\BZ_p$-algebra $R$ and let $\nu:\ 1+p^i\BZ_p\to R^\times$ be a continuous character such that 
 $$\nu(z)=\exp(v\log(z))=\sum_{k\geq0}{ v \choose k}(z-1)^k $$
 for all $z\in 1+p^i\BZ_p$ and some $v\in R[\frac{1}{p}]$. Note that 
$$v_p({v \choose k})>\begin{cases} k(v_p(v)-\frac{1}{p-1}) & v_p(v)<0 \\ k \frac{v_p(v)-n-1-\frac{1}{p-1}}{p^{n+1}}\  & \ n\leq v_p(v)<n+1,\ n\in\BN.\end{cases}$$
  Then for any 
 $p$-adically complete and separated  $R$-algebra $A$ with a map $\Spf(A)\to\fIG_i(r)$, 
  $\exp(v\log(-))$ is well-defined on $\CT(A)$ and $\nu$ extends to a character $ \CT(A)\to A^\times$ if 
  \[i-\frac{p^i}{(p-1)r}+\frac{v_p(v)-n-1-\frac{1}{p-1}}{p^{n+1}}>0,\quad n\leq v_p(v)<n+1\] 
Let $\nu:\ \BZ_p^\times \to R^\times$ be a continuous character whose restriction    $\nu|_{1+p^i\BZ_p}$ satisfies the conditions above. Then $\nu$ extends uniquely to a character of $\CT^{\ext}$. 
 \begin{defn}Let $\pr_1: V(\Omega,s_{\HT})\to \fX(r)$ and $\pr_2: V_Q(\BH^\sharp,s_{\HT})\to \fX(r)$  be the structure maps. Set
 \begin{itemize}
 \item $\fm^{\nu}:=\pr_{1,*}\CO_{V(\Omega,s_{\HT})}[\nu^{-1}]$ and 
$\BW^{\nu}:=\pr_{2,*}\CO_{V_Q(\BH^\sharp,s_{\HT})}[\nu^{-1}]$; 
\item For any $h\in\BN$, $\Fil_h\BW^{\nu}:=\Fil_h(\pr_{2,*}\CO_{\CV_{Q}(\BH^\sharp,s_{\HT})})[\nu^{-1}]$.
\end{itemize}
Here $[\nu]$ means taking the $\nu$-isotypic component with respect to the $\CT^{\ext}$-action. Note that $\Fil_0\BW^{\nu}=\fm^{\nu}$. 

 Let $\pr_1^0: V(\Omega,s_{\HT})\to \fIG_i(r)$ and $\pr_2^0: V_Q(\BH^\sharp,s_{\HT})\to \fIG_i(r)$  be the structure maps. Then with respect to $\CT$-action, we can define 
 \begin{itemize}
 \item $\fm^{\nu,0}:=\pr^0_{1,*}\CO_{V(\Omega,s_{\HT})}[\nu^{-1}]$ and
$\BW^{\nu,0}:=\pr^0_{2,*}\CO_{V_Q(\BH^\sharp,s_{\HT})}[\nu^{-1}]$;
\item For any $h\in\BN$, $\Fil_h\BW^{\nu,0}:=\Fil_h(\pr^0_{2,*}\CO_{\CV_{0,Q}(\BH^\sharp,s_{\HT})})[\nu^{-1}]$.
\end{itemize}
 \end{defn}


 \begin{prop}\label{GD}  The sheaf  $\BW^\nu$ and the filtration $\Fil_\bullet\BW^\nu$ are functorial  in varying   $(R,\nu)$. Moreover, 
 \begin{itemize}
 \item $\Fil_h\BW^{\nu}$ is a locally free $\CO_{\fX(r)}$-module of rank $h+1$;
 \item $\BW^{\nu}$ is the $p$-adic completion of $\varinjlim_h\Fil_h\BW^{\nu}$;
  \item If $\nu$ specializes to the $k$-th power character $$\BZ_p^\times\to\BZ_p^\times;\quad x\mapsto{x^k}$$  via $[k]:\ \Spf(\BZ_p)\to\Spf(R)$,
  then as filtered modules, $$[k]^*(\Fil_k\BW^\nu)\mid_{\CX(r)}=\Sym^k\BH\mid_{\CX(r)}$$  where  we consider the Hodge-de Rham filtration  on the right hand side.
 \end{itemize}

 \end{prop}
 \begin{proof}It suffices to work locally.  Let $\Spf(A)\subset\fIG_{1}(r)$ be an affine open subset such that over $\Spf(A)$
 \begin{itemize}
 \item $\uomega$ is generated by $\omega$ and $\BH$ is generated by $\{\omega,\eta\}$;
 \item  $\udelta$ is generated by $\delta$ and $\CI$ is generated by $\beta$;
 \item $\Omega$ is generated by $f:=\delta\omega$ and $\BH^\sharp$ is generated by $\{f, g:=\delta^p\eta\}$.
 \end{itemize}
Let $\Spf(A_i)$ be the pre-image of $\Spf(A)$ in $\fIG_i(r)$. Then locally on $\Spf(A_i)$, $\CO_{\CV_Q(\BH^\sharp,s_{\HT})}=A_i\langle Z,T\rangle$
where $X,Y,Z,T\in\CO_{\CV_Q(\BH^\sharp,s_{\HT})}$ such that for any $(\rho,v)\in\CV_Q(\BH^\sharp,s_{\HT})(R)$, $$X(\rho,v):=v(\rho^*(f)),\quad  Y(v):=v(\rho^*(g)),\quad 1+\beta Z=X,\quad Y=\beta T$$ 
 Arguing as in \cite[Lemma 2.3.4]{Fan}, locally on $\Spf(A_i)$
 \begin{equation*}
 \BW^{\nu,0}=\{\sum_{k\geq0}a_k\nu(1+\beta Z)(\frac{T}{1+\beta Z})^k\mid a_k\in{A_i\hat{\otimes}R}, a_k\to0\}\subset A_i\hat{\otimes}R\langle{Z,T}\rangle,
 \end{equation*}
 with  $$\Fil_h\BW^{\nu,0}=\{\sum_{k=0}^{h}a_k\nu(1+\beta Z)(\frac{T}{1+\beta Z})^k\mid a_k\in{A_i\hat{\otimes}R}\}$$

 Since $\udelta$ and $\CI$ are defined on $\fIG_1(r)$, the argument in \cite[Lemma 2.3.3]{Fan} shows that $1+p\BZ_p\subset \BZ_p^\times$ acts on the vector $\nu(1+\beta Z)(\frac{T}{1+\beta Z})^k$ via $\nu$. By construction 
 $\fIG_i(r)^{1+p\BZ_p}=\fIG_1(r)$. Thus the  $\nu^{-1}$-isotypic component of   $\Fil_h W^{\nu,0}$  with respect to the $(1+p\BZ_p)\CT$-action are all free of rank $h+1$ over $\fIG_1(r)$. When $p=2$, we are done since $\fIG_1(r)=\fX(r)$. When $p$ is odd, we only need to consider the residue action of $\BF_p^\times$ as in \cite[Proposition 4.9]{AI}.

 For the functorial property, see \cite[Section 2.3.3]{Fan}.
 \end{proof}

\begin{defn}The space of $R$-families of nearly overconvergent quaternionic modular forms with overconvergence radius $r$ (resp. and of order $h$), tame level $U^p$, weight $\nu$ is $$N^{\dagger,r,\nu}(U^p, R):=H^0(\fX(r)\hat{\otimes}R, \BW^{\nu}),\quad \text{resp.}\  N^{\dagger,r,\nu}_h(U^p, R):=H^0(\fX(r)\hat{\otimes}R, \Fil_h\BW^{\nu}).$$
When $h=0$, $M^{\dagger,r,\nu}(U^p, R):=N^{\dagger,r,\nu}_0(U^p, R)$ is the space of $R$-families of  overconvergent quaternionic modular forms.
\end{defn}

\subsection{Hecke operators}
We first consider Hecke operators away from $p$.
 For any $g\in\BB^{p\infty,\times}$ and tame levels $U_1^p,U_2^p\subset \BB^{p\infty,\times}$ such that $U_1^p\subset{gU_2^pg^{-1}}$, the right translation
$$T_g:\ B^\times\backslash{(\BC-\BR)}\times\BB^\times/U_1^pU_p\xrightarrow{g}B^\times\backslash{(\BC-\BR)}\times\BB^\times/U_2^pU_p;\quad [z,h]\mapsto[z,hg]$$
extends to a map
$T_g:\ \fX_{U_1^p}\to\fX_{U_2^p}.$
Since the induced isogeny $\tau_g:\ \CG\to{T_g^*(\CG)}$  has degree prime to $p$,  $T_g^*(\Hdg)=\Hdg$ and $\CC_n\cong T_g^*(\CC_n)$. Consequently the diagram $$\xymatrix{
 \CV_Q(\BH^\sharp,s_{\HT}) \ar[r]^{T_g} \ar[d] & \CV_Q(\BH^\sharp,s_{\HT}) \ar[d] \\
\fIG_{U^p_1,i}(r) \ar[d] \ar[r]^{T_g} & \fIG_{U^p_2,i}(r) \ar[d] \\
  \fX_{U^p_1}(r) \ar[d] \ar[r]^{T_g} & \fX_{U^p_2}(r) \ar[d] \\
  \hat{\fX}_{U^p_1} \ar[r]^{T_g} & \hat{\fX}_{U^p_2}   }$$
 is commutative and  compatible  with the action of $\CT^{\ext}$. 
 
 In particular for $\ell\neq p$ such that  $B_\ell$ is split and $U^p$ is maximal at $\ell$, the usual Hecke correspondence $T_\ell$ on $X$ induces the Hecke operator $T_\ell$ on $N^{\dagger,r,\nu}(R)$, which preserves each $N^{\dagger,r,\nu}_h(R)$, $h\in\BN$. Here if we fix an isomorphism $B_\ell\cong M_2(\BQ_\ell)$ such that $U_\ell\cong \GL_2(\BZ_\ell)$, \[T_\ell=\frac{1}{\ell}\left(\sum_{i\in \BZ/\ell\BZ}\begin{pmatrix}\ell & i\\ 0 & 1\end{pmatrix}+\begin{pmatrix} 1 & 0 \\ 0 & \ell\end{pmatrix}\right)\]

Now we consider the $U_p$ and $V_p$-operators.  Let $\phi_{\Fr}: \fX(pr)\to\fX(r)$ be the Frobenius map characterized by $\phi^*(\CG)=\CG/\CC_1$ and $\Phi_{\Fr}:\ \fIG_i(pr)\to\fIG_i(r)$ be the normalization of the morphism $\CIG_{i}(pr)\to\CIG_{i}(r)$ characterized by the similar  property. By \cite[Proposition 2.2.16]{Fan}, one has the following commutative diagram   $$\xymatrix{
   \fIG_{1}(pr) \ar[d]_{p_1} \ar[dr]^{p_2} \ar[r]^{\Phi_\Fr} & \fIG_{1}(r) \ar[d]^{p_1} \\
   \fX(pr) \ar[r]^{\phi_\Fr} & \fX(r)   }$$
   where $p_1$ is the forgetful map and $p_2=\phi_{\Fr}\circ{p_1}$.
   Denote by $Q$ the quotient isogeny $\CG\to\CG/\CC_1$ and $Q^\prime:\ \CG/\CC_1\to \CG$ the dual isogeny over $\fIG_i(r)$. By \cite[Lemma 2.3.17, 2.3.19]{Fan},  they induce   $$\frac{Q^*}{p}:\ p_2^*(\Omega)\cong{p_1^*(\Omega)},\quad (Q^\prime)^*:\ p_1^*(\BH^\sharp)\to{p_2^*(\BH^\sharp)}$$
and consequently morphisms
         $$\CV:\ \phi_\Fr^*(\fm^{\nu})\cong\fm^{\nu},\quad \CU:\ \BW^{\nu}\to\phi_\Fr^*(\BW^{\nu}). $$
         Here the latter is compatible with  the filtration $\Fil_\bullet\BW^{\nu}$.
   \begin{defn}\label{UV}One defines the $V_p$-operator to be  the composition
$$\CV\circ\phi_\Fr^*:\ M^{\dagger,r,\nu}(R)\to M^{\dagger,pr,\nu}(R)$$
and the $U_p$-operator on $N^{\dagger,r,\nu}(R)$ to be the  composition
$$j\circ{\frac{1}{p}\RTr}\circ\CU:\ N^{\dagger,r,\nu}(R)\to N^{\dagger,r,\nu}(R)$$
where $j$ is induced by the natural embedding $\fX(pr)\to\fX(r)$ and $\RTr$ is the trace map of $\phi_{\Fr}$. 

By construction,  $U_p$ respects the filtration on $N^{\dagger,r,\nu}(R)$ and  the composition $$U_p\circ{V_p}:\ M^{\dagger,r,\nu}(R)\to M^{\dagger,pr,\nu}(R)$$
coincides with the natural embedding $j$.
\end{defn}

Take any finite order character $\xi:\ \BZ_p^\times\to R^\times$ and assume $r>(p+1)p^{2\Cond\xi-2}$. By \cite[Section 3.6]{AI} and \cite[Lemma 2.3.29 \& Corollary 2.3.30 \& Proposition 2.4.17]{Fan}, there exists a twist-by-$\xi$ operator $\theta^\xi$ with the following properties: 
   \begin{itemize}
       \item $\theta^{\xi}(N_h^{\dagger,r,\nu}(R)[1/p])\subset{N_h^{\dagger,r,\nu+2\xi}(R)[1/p]}$,
       \item $U_p\circ\theta^\xi=0$ and if $U_p(F)=0$, then $\theta^{\xi^{-1}}\theta^\xi(F)=F$.
   \end{itemize}
We refer to \cite[Section 4.3]{BH24} for the nice adelic formulation of twisting by finite order characters.

\subsection{$p$-adic iteration of Gauss-Manin connections}
Let $\CIG_{i}^\prime(r)$ be the finite \'etale Galois cover of  $\CX(r)$ which parameterizes  compatible trivializations $$(\BZ/p^i\BZ)^2\cong{\CG^D[p^i]},\quad \BZ/p^i\BZ\cong{\CC_i^D}.$$  Since $\fX(r)$ is normal, the normalization $\fIG_{i}^\prime(r)$ of $\fX(r)$ in $\CIG_{i}^\prime(r)$ is well-defined and finite over $\fX(r)$. 
\begin{lem}\label{GMI}\cite[Proposition A.3]{AI21}  The Gauss-Manin connection $\nabla$ on $\BH$ induces a connection
 $$\nabla^\sharp:\ \BH^\sharp\to\BH^\sharp\otimes\Omega^1_{\fIG_{i}^\prime(r)/\BZ_p}$$
which satisfies the Griffiths transversality and $\nabla^\sharp\mid_{\Omega}\equiv0 \mod \CI$. 
\end{lem}
Define $\CV_Q(\BH^\sharp,s_{\HT})$ over $\fIG_{i}^\prime(r)$.
Then by \cite[Section 4]{Mol21}, we obtain a connection
$$\nabla^\sharp:\ \CO_{\CV_Q(\BH^\sharp,s_{\HT})}\to{\CO_{\CV_Q(\BH^\sharp,s_{\HT})}}\otimes\Omega_{\fIG_{i}^\prime(r)/\BZ_p}^1$$
which satisfies the Griffiths transversality. 
 Take any $R$-valued locally analytic character $\nu$ as in Subsection \ref{NO}.
\begin{prop}\label{GM}The connection $\nabla^\sharp$  induces  a  connection
$$\nabla:\ \BW^{\nu}\to \BW^{\nu}\hat{\otimes}_{\CO_{\fX(r)}}\Omega^1_{\fX(r)/\BZ_p}[1/p]$$
which satisfies the Griffiths transversality.
\end{prop}
\begin{proof}We have a natural candidate $\nabla^\sharp$ on $\BH^\sharp$, which admits the  following local description. Choose  $\Spf(A)$ and   $\{\omega,\eta, f,g\}$ as in Proposition \ref{GD}.
Assume the Gauss-Manin connection $\nabla$ on $\fX$ satisfies
$$\nabla(\omega)=\omega\otimes{x}+\eta\otimes{y},\quad \nabla(\eta)=\omega\otimes{z}+\eta\otimes{w}$$
for some $x,y,z,w\in\Omega_{\fX/\BZ_p}^1$.
Then for $f=\delta\omega$ and $g=\delta^p\eta$
$$\nabla^\sharp{f}=f\otimes a_0+g\otimes{b};\quad \nabla^\sharp(g)=f\otimes{c}+g\otimes{d_0}$$
with
$$a_0:=x+\frac{d\delta}{\delta},\ b:=\frac{y}{\delta^{p-1}},\ c:=\delta^{p-1}z,\ d_0:=w+p\frac{d\delta}{\delta}.$$ 
Linearize $\nabla^\sharp$  as in \cite[Section 2.4]{AI21}. For $X=1+\beta Z$, $V=\frac{T}{1+\beta Z}$ and $v=\lim_{t\to1}\frac{\log(\nu(t))}{t}$,  by \cite[Lemma 2.3.9]{Fan} 
\begin{equation}\label{key}
\nabla^\sharp(X^{\nu}V^h)=(v-h)X^{\nu}V^h\otimes{a_0}+hX^{\nu}V^h\otimes{d_0}+(v-h)\beta X^{\nu}V^{h+1}\otimes{b}+\beta^{-1}hX^{\nu}V^{h-1}\otimes{c}.
\end{equation}
  Together with the local description in Proposition \ref{GD}, we obtain  
\[\nabla^\sharp:\ \BW^{\nu,0}\to \BW^{\nu,0}\hat{\otimes}_{\CO_{\fIG_i(r)}}\Omega^1_{\fIG_{i}(r)/\BZ_p}[1/p]\]
From the residue action of $(\BZ/p^i\BZ)^\times$, we obtain the desired connection $\nabla$ on $\BW^\nu$ by descent.  
\end{proof}
 Consider the  multiplication
$$ \BW^{\nu}\otimes_{\CO_{\fX(r)}}\fm^{2}\to\BW^{\nu+2}$$
induced by the
multiplicative structure on $\CO_{\CV_Q(\BH^\sharp,s_{\HT})}$.
Together with the Kodaira-Spencer isomorphism $\fm^2[1/p]\cong \uomega^2[1/p]\cong \Omega_{\fX(r)/\BZ_p}^1[1/p]$ on $\fX(r)$,
we obtain a map  
$\nabla: \BW^\nu\to \BW^{\nu+2}[1/p]$.
 \begin{prop}The $R$-linear map $\nabla:\ N^{\dagger,r,\nu}(R)\to{N^{\dagger,r,\nu+2}(R)[1/p]}$
 satisfies that:
 \begin{enumerate}
 \item[(i)] For any $h\in\BN$, $\nabla(N_h^{\dagger,r,\nu}(R))\subset N_{h+1}^{\dagger,r,\nu+2}(R)[1/p]$ ;
 \item[(ii)] When $T_\ell$ is defined, $T_\ell\circ\nabla=\ell\nabla\circ{T_\ell}$;
 \item[(iii)]  $U_p\circ\nabla=p\nabla\circ{U_p}$;
 \item[(iv)] For any finite order character $\xi:\ \BZ_p^\times\to R^\times$, $\nabla_{\nu+2\xi}\circ\theta^{\xi}=\theta^{\xi}\circ\nabla_{\nu}$.
 \end{enumerate}
 Moreover if the $k$-th power character is a specialization of $\nu$, then the isomorphism
 $$[k]^*(\Fil_k\BW^\nu)\mid_{\CX(r)}\cong\Sym^{k}\BH\mid_{\CX(r)}$$
 identifies $[k]^*(\nabla)$ with the composition
 $$\Sym^k\BH\xrightarrow{\nabla}\Sym^k\BH\otimes\Omega^1_{\fX(r)}\xrightarrow{\KS^{-1}}\Sym^k\BH\otimes\uomega^{\otimes2}\hookrightarrow\Sym^{k+1}\BH.$$
 \end{prop}
 \begin{proof} Note that $\nabla$ satisfies Griffiths transversality, which implies Item $(i)$. Item (ii) follows from the functoriality of the Gauss-Manin connection and the properties of  the Kodaira-Spencer map.  Item $(iii)$ and Item $(iv)$ can be proven by expansion in the Serre-Tate local coordinates.  We refer to \cite[Proposition 2.4.15(ii)]{Fan}  and
\cite[Proposition 2.4.17]{Fan} for details. Finally,  the local computation in the proof of Proposition \ref{GM}  implies the identification.
\end{proof}

The following observation is crucial for $p$-adically interpolating $\nabla$. We work over $\fIG_i(r)$.
\begin{prop}\label{Integral property} For each $N\in\BN$,
$\Hdg^{cN}\nabla^N( \fm^{\nu,0})\subset\BW^{\nu+2N,0}$ with $c=
\frac{p^i+1}{p-1}$.
\end{prop}

\begin{proof}Let $\Pr:\ \fIG_i(r)\to\fX(r)$ be the structure map.
The Kodaira-Spencer isomorphism $\uomega^2\cong \Omega_{\fX/\BZ_p}^1$ on $\fX$ induces $\udelta^2\Pr^*(\Omega_{\fX/\BZ_p}^1)\cong\Omega^2$ over $\fIG_i(r)$. Combining the key identity \eqref{key} and Lemma \ref{relative differential}, we have $\Hdg^{c}\nabla(\fm^{\nu,0})\subset \BW^{\nu+2,0}$. To conclude the proof, we apply the key identity \eqref{key} to do induction. The desired result follows by noting that in the induction process,  terms containing $\beta^{-1}$ will not contribute since we start with $h=0$.
\end{proof}
Now we turn to the $p$-adic iteration of the Gauss-Manin connection. Take $(r,i)\in\BN^2$ such that $r\geq (p+1)p^{2i-2}$.
\begin{itemize}
\item Let $\nu:\ \BZ_p^\times \to R^\times$ be a locally analytic character such that there exists $v\in R[\frac{1}{p}]$ such that $\nu(z)=\sum_{k\geq0}{v\choose k}(z-1)^k$ for all $z\in 1+p^i\BZ_p$ and 
\[i-\frac{p^i}{(p-1)r}+\frac{v_p(v)-n-1-\frac{1}{p-1}}{p^{n+1}}>0,\quad n\leq v_p(v)<n+1\] 
This guarantees that $\nu$ extends to a character of $\CT^{\ext}$.
\item Take  $F\in M^{\dagger,r,\nu}(R)$ such that $U_p(F)=0$.

\item Let $\mu:\ \BZ_p^\times\to R^\times$ be another  locally analytic character such that there exists $u\in R[\frac{1}{p}]$ such that $$\mu(z)=\exp(u\log(z))=\sum_{k\geq0}{ u \choose k}(z-1)^k, \quad  \forall\ z\in 1+p^{i}\BZ_p $$
\item  Let $c=
\frac{p^i+1}{p-1}$ and $d=\begin{cases} v_p(u)-\frac{1}{p-1},\ & \text{if}\ v_p(u)<0;\\ \frac{v_p(u)-n-1-\frac{1}{p-1}}{p^{n+1}}\  & \ n\leq v_p(u)<n+1,\ n\in\BN.\end{cases}$
\end{itemize}

\begin{prop}\label{explicit iteration}Assumptions and notations as above. Then there exists an explicitly constructed element
 $\nabla^\mu(F)\in N^{\dagger, r^\prime, \nu+2\mu}(R)[1/p]$ with $r^\prime>\frac{i^2r+c}{i+d}$  such that for any classical specialization $k$ and $\ell$ of $\nu$ and $\mu$, one has 
 $$[k+2\ell]^*\nabla^\mu(F)=\nabla_k^{2\ell}([k]^* F).$$

\end{prop}
\begin{proof}
By the property of twisting by finite order character and the definition of $\nabla^\mu$, we may view $F$ as a section of $H^0(\fIG_i(r),\fm^{\nu})$ and work on $\fIG_i(r)$. By the assumption $r\geq (p+1)p^{2i-2}$, we have the twist-by-$\xi$ operator $\theta^\xi$ for any character $\xi$ of $(\BZ/p^i\BZ)^\times$ after base change to $\BZ_p[\zeta_{p^i}]$ and taking normalization. Thus we  follow \cite[Section 6.5]{Mol21} to  consider 
\[\nabla^\mu:=\sum_{\gamma\in (\BZ/p^i\BZ)^\times}\mu(\gamma)\sum_{k\geq0}{u \choose k} (\frac{\nabla-\gamma\mathrm{Id}}{\gamma})^k \circ \nabla^{\gamma+p^i\BZ_p}\] where $\nabla^{\gamma+p^i\BZ_p}=\frac{\sum_\xi \xi(\gamma)^{-1}\theta^\xi}{\sharp(\BZ/p^i\BZ)^{\times}}$ with the summation runs over all characters of $(\BZ/p^i\BZ)^\times$. 
Now it suffices to show that $\{{w \choose k} (\frac{\nabla-\gamma\mathrm{Id}}{\gamma})^k \circ \nabla^{\gamma+p^i\BZ_p}(F)\}$ forms a Cauchy sequence when restricted to $\fIG_i(r^\prime)$. 

By  \cite[Proposition 2.4.16]{Fan} (see also \cite[Section 6.4]{Mol21}), locally on the ordinary locus  $\fIG_i^{\ord}$ and in the Serre-Tate local coordinates (see \textit{loc.cit.,} for notations)
\[\nabla^{\gamma+p^i\BZ_p}(a_n(1+t)^nU_{\nu,h})=\begin{cases} a_n(1+t)^n U_{\nu,h},&\ n\equiv \gamma\pmod {p^i};\\
0 &\ \text{otherwise}\end{cases}\]
where $U_{\nu,h}:=(1+p^i Z)^\nu \frac{T^h}{(1+p^i Z)^h}$. 
Note we have $$\nabla^N(g(t)U_{\nu,h}):=\sum_{j=0}^Np^{ij}a_{N,\nu,h,j}\partial^{N-j}(g(t))U_{\nu+2N,j+h}$$
where $\partial=(1+t)\frac{d}{dt}$, $a_{N,\nu,h,j}=1$ if $j=0$ and for $1\leq{j}\leq{N}$,
$$a_{N,\nu,h,j}={N\choose{j}}\prod_{t=0}^{j-1}(v-h+N-1-t).$$
By Proposition \ref{Integral property} and induction on $k$, 
\[\Hdg^{ck}(\frac{\nabla-\gamma\mathrm{Id}}{\gamma})^k\circ\nabla^{\gamma+p^i\BZ_p}(F)\in H^0(\fIG_i(r),\oplus_{j=0}^k\BW^{\nu+2j})\cap p^{ik}H^0(\fIG_i^{\ord},\oplus_{j=0}^k\BW^{\nu+2j})\]
By Lemma \ref{cokernel}, we deduce
\[\Hdg^{ck+i^2kr+\frac{p^i-p}{p-1}}(\frac{\nabla-\gamma\mathrm{Id}}{\gamma})^k\circ\nabla^{\gamma+p^i\BZ_p}(F)\in p^{ik}H^0(\fIG_i(r),\oplus_{j=0}^k\BW^{\nu+2j})\]
By the estimation of $v_p({u\choose k})$, we find that  when  restricted to $\fIG_i(r^\prime)$,
\[i+\frac{v_p({u \choose k})}{k}-(c+i^2r)v_p(\Hdg)>0\]
and consequently $\{{u \choose k} (\frac{\nabla-\gamma\mathrm{Id}}{\gamma})^k \circ \nabla^{\gamma+p^i\BZ_p}(F)\}$ forms a Cauchy sequence.
For the interpolation property, we refer to \cite[Proposition 2.5.2]{Fan}.
\end{proof}
For $*=0,1$, consider the rigid analytic curves $\CX_*(n)$  associated to $X_*(n)$ and  for $r>(p+1)p^{n-2}$, let $\CX_*(n;r)_c$  be the canonical loci of $\CX_*(n)$, i.e., the open admissible subspace on which the level subgroup  coincides with the canonical subgroup (of level $n$). It is well-known that the morphism 
 \[\gamma:\ \CX(r) \to \CX_0(n;r)_c,\quad A\mapsto (A,\CC_n)\]
is actually an isomorphism which is compatible with the $U_p$-action on both sides. It induces a $U_p$-compatible isomorphism $\CIG_n(r) \cong\CX_1(n;r)_c$. Via $\gamma$, we can transfer all the results concerning nearly overconvergent quaternionic modular forms on $\CX(r)$ and $\CIG_n(r)$ to $\CX_0(n;r)_c$ and $\CX_1(n;r)_c$.

\subsection{Theory on the half canonical loci}\label{Half canonical}
In this subsection,  we adapt the beautiful idea of Andreatta-Iovita in the first arXiv version of \cite{AI}{\footnote{This idea is no longer necessary for the published version due to the restriction on ramifications of the Hecke characters and consequently the  specific  choice of test vectors.}} to extend the theory of nearly overconvergent quaternionic modular forms to a slightly larger domain, which we refer to as the half canonical loci. We shall work over $\BQ_p(\zeta_{p^n})$. Take $n/2\leq d\leq n$ and let $\CX_0(n;r)_c^{\geq d}$ be the preimage of $\CX_0(d;r)_c$ in $\CX_0(n)$. Let $X_0(n,d)=X_0(n)\times_{X_0(d)}X_1(d)$ and $\CX_0(n,d;r)=\CX_0(n;r)\times_{\CX_0(d;r)}\CX_1(d;r)$.  Let $\CX_0(n,d;r)_c^{\geq d}$ be the preimage of $\CX_0(n;r)^{\geq d}_c$.

Assume that $r>(p+1)p^{2n-2d-2}$. Then the universal false  elliptic curve $\CA$ on $\CX(r)$ admits the canonical subgroup of level $\max\{2(n-d),n\}$.  For $d\leq k\leq n$, let \[H^k=(\CA/\CC_{2k-n})[p^{n-k}]/(\CC_{k}/\CC_{2k-n})\subset \CA^{k}=\CA/\CC_{k}.\] Then $\CA^k$ admits canonical subgroup $\CC_{n-k}^k$ of level $n-k$ and $\CA^k[p^{n-k}]\cong \CC_{n-k}^k\times H^k$.  Moreover by \cite[Lemma 2.3.28]{Fan}  and \cite[Section 3.8]{AI21}, our fixed root of unity induces a  bijection over $\CX_0(n,d;r)_c^{\geq d}$
 $$\eta:\ \Hom(H^k,\CC_{n-k}^{k})\cong\Hom(\BZ/p^{n-k}\BZ,\mu_{p^{n-k}})\cong \BZ/p^{n-k}\BZ$$
where the last isomorphism is the inverse of 
$$\BZ/p^{n-k}\BZ\cong\Hom(\BZ/p^{n-k}\BZ,\mu_{p^{n-k}}),\quad a\mapsto(1\mapsto\zeta_{p^{n-k}}^a).$$
For any $j\in(\BZ/p^{n-k}\BZ)^\times$, consider the isogeny $\rho_j$ corresponding to $j$ and set $H_j^k:=(\rho_j\times\id)(H^k)$. Let $C_{k,j}$ be the preimage of $H_j^k$ in $\CA$, which is generically cyclic.  
\begin{lem} Assume  $r>(p+1)p^{2n-2d-2}$ and  let $S_{\geq d}=\sqcup_{k=d}^n S_k$ with $S_k=\{(k,j)|j\in (\BZ/p^{n-k}\BZ)^\times\}$. For $(k,j)\in S_{\geq d}$, let $\gamma_{k,j}$ be the map \[\CIG_d(r)\to \CX_0(n,d;r),\quad \CA\mapsto (\CA, C_{k,j})\] and $\CX_0(n,d;r)_{k,j}$ be the image of $\gamma_{k,j}$.  Then one has an isomorphism \[\sqcup_{(k,j)\in S_{\geq d}} \CIG_d(r)\xrightarrow{\gamma=(\gamma_{k,j})} \sqcup_{(k,j)\in S_{\geq d}}\CX_0(n,d;r)_{k,j}\cong \CX_0(n,d;r)_c^{\geq d}\] 
Moreover, \begin{itemize}
    \item $U_p(\CX_0(n,d;r)_{k,j})\subset \CX_{0}(n,d;r)_{k+1,j}$ for $k\leq n-1$ and $U_p(\CX_0(n,d;r)_{n,j})\subset \CX_{0}(n,d;r)_{n,j}$
    \item $\gamma_{k+1,j}^{-1}\circ U_p\circ \gamma_{k,j}$ is just the $U_p$-correspondence on $\CIG_d(r)$.
\end{itemize}
\end{lem}
\begin{proof}Clearly by construction, $\CX_0(n,d;r)_{k,j}\subset \CX_0(n,d;r)_c^{\geq d}$ and $\CX_0(n,d;r)_{k,j}$ are all disjoint. Note that $U_0(d)/U_0(n)$ has a complete set of representative $\sqcup_{d\leq k\leq n}\{\begin{pmatrix} 1 & 0\\ p^k j & 1\end{pmatrix}|  (j\in \BZ/p^{n-k}\BZ)^\times\}$. Thus by the moduli interpretation of $X_0(n,d)$, a counting argument shows that  \[\sqcup_{(k,j)\in S_{\geq d}}\CX_0(n,d;r)_{k,j}\cong \CX_0(n;r)_c^{\geq d}\]and the map $\gamma=\{(\gamma_{k,j})\}$ gives an isomorphism $\sqcup_{(k,j)\in S_{\geq d}} \CIG_d(r)\cong \CX_0(n,d;r)_c^{\geq d}$.

For the final statement, take any point $P=(A,H, Q)\in X_0(n,d;r)_{k,j}$ where $H$ is the level subgroup and $Q\in H[p^d]$ is a generator. Then  the $U_p$-correspondence  $U_p(P)=\{(A_i, H_i, Q_i)\}$ where $A_i$'s are $A$ quotient out by the $p$-cyclic $\CO_B$-module  which intersects the canonical subgroup trivially and $H_i$, $Q_i$ is the image of $H$ and  $Q$.  By the functorial property of canonical subgroups, we deduce $H_i[p^{k+1}]$ is the canonical subgroup of $A_i$ of level $k+1$ as $A_i/H_i[p]\cong A$.  Thus $U_p(P)\subset X_0(n,d;r)_{k+1,j}$ and the final statement follows.
\end{proof}
\begin{remark}Consider the $p$-level $U_0(n)\cap U_1^1(d)$ Shimura curve  $X_U$ where \[U_1^1(d)=\{\begin{pmatrix} a& b\\ c & e\end{pmatrix}\in\GL_2(\BZ_p)| a-1\equiv e-1\equiv c\equiv 0 \mod p^d\}\]
Then the chosen root of unity determines a splitting of the covering map $\CX_U\to \CX_0(n,d)$ on $\CX_0(n,d;r)_c^{\geq d}$. View $\CX_0(n,d;r)_{k,j}$ as subspace of $\CX_U$ via this splitting. Then $\gamma_{k,j}=T_{\gamma_{k,j}}\circ\gamma$ where  $T_{\gamma_{k,j}}$ is the Hecke translation given by $\gamma_{k,j}=\begin{pmatrix} 1 & 0 \\ p^kj & 1\end{pmatrix}$ and we deduce the statement on $U_p$-correspondence above via the Hecke translation description $U_p=\frac{1}{p}\sum_{i\in\BZ/p\BZ}\begin{pmatrix}p & i\\ 0 & 1\end{pmatrix}$.
\end{remark}

 The isomorphism $\gamma$ extends the theory of nearly overconvergent quaternionic modular forms  to  $\CX_0(n,d;r)_c^{\geq d}$. In particular for any character $\nu$, $\mu$ as in previous sections, there exists the nearly overconvergent quaternionic modular sheaf $\fm^\nu\subset \BW^\nu$ on  $\fX_0(n,d;r)^{\geq d}$ interpolating $\omega^k$ and $\Sym^k \BH$ for  $k\in\BN$ and if $F\in H^0(\CX_0(n,d;r)_c^{\geq d}, \fm^\nu)$ satisfying $U_p(F)=0$, there  exists  $\nabla^\mu F\in H^0(\CX_0(n,d; r^\prime)^{\geq d}_c,\BW^{\nu+2\mu})$ for $r^\prime$ as in Proposition \ref{explicit iteration}.

\section{Anticyclotomic $p$-adic $L$-function}\label{Anticyclotomic p-adic L-function}
In this section, we construct the one and two-variable anticyclotomic $p$-adic $L$-function for  Rankin pairs. Besides the notations fixed in Subsection \ref{NC}, fix a finite order Hecke character $\omega=\otimes_v\omega_v$ over $\BQ$. 


\subsection{CM points on Shimura curve}\label{CM points}
 Attached to the data $(B,\CO_B)$, we have the Shimura curve $\varprojlim_U X_U$. The embedding $\CK^\times\to B^\times$  determines a subvariety, which is defined over $\CK$. We shall refer to points in this subvariety  by CM points. Take a small enough tame level $U^p\subset \BB^{p\infty,\times}$. Let $X_0(n)$ (resp. $X_1(n)$) be the Shimura curve $X_{U^p U_p}$ for $U_p=U_0(n)$ (resp. $U_1(n)$)  (see Section \ref{deRhamhodge}). For $n=0$, denote $X_0(n)=X_1(n)$ by $X$. 
 
 Recall that   $X_0(n)$ (resp. $X_1(n)$) parameterizes quadruples $(A,i,\bar{\eta},H)$ (resp. $(A,i,\bar{\eta},Q)$) where $(A,i)$ is a false elliptic curve, $\bar{\eta}$ is the $U^p$-orbit of $\CO_B$-linear isomorphism $\hat{\CO}_B^p\cong T^p(A)$ and  $H\subset A[p^n]$ is a sub-$\CO_B$-module  of rank $p^{2n}$ (resp. $Q\in A[p^n]$ is a point of exact order $p^n$).
  Unraveling  the identification between group-theoretic interpretation and moduli interpretation of Shimura curve, the (false) elliptic curve $(A,i)$ underlying a CM point $P\in X_0(n)$  has complex multiplication by $\CO_\CK\cap\CO_B=\CO_d:=\BZ+d\CO_\CK$ for some positive integer $d$ prime to $p$. Assume that $L/\BQ_p$ is large enough so that $(A,i,H)$ extends  to a smooth model over $\CO_L$ and denote by $\Ha(A)$ its Hasse invariant.

When $p$ splits in $\CK$, all CM points lie in the ordinary locus (see \cite[Lemma 2.4.2]{LZZ18}{\footnote{The embedding in \cite{LZZ18} is $(a,b)\mapsto \begin{pmatrix} a & 0\\ 0 & b\end{pmatrix}$. Its  conjugation by $\begin{pmatrix} 0 & 1\\-1 & 0\end{pmatrix}\begin{pmatrix}1 & 1\\ 0 & 1\end{pmatrix}$ is our embedding.}}).
Now we analyze the  Hasse invariants of the CM points when  $p$ is non-split.
\begin{lem}\label{CMI}Assume $p$ is non-split in $\CK$ and take $n\geq1$. Then for each CM point in $X$, there are precisely $C_0=\begin{cases} (p+1)p^{n-1} & p\ \text{inert}\\ p^{n-1} & p\ \text{ramified}\end{cases}$ (resp. $p^{n-1}(p-1)C_0$) CM points lying over it in  $X_0(n)$ (resp. $X_1(n)$). 
\end{lem}
\begin{proof}
When $p$ is inert, $U_0(n)\cap \CO_{\CK_p}^\times=\CO_{\CK_p,p^n}^\times$  and 
when $p$ is ramified, $U_0(n)\cap \CO_{\CK_p}^\times=\CO_{\CK_p,p^{n-1}}^\times$. Here 
 $\CO_{\CK_p,p^k}:=\BZ_p+p^k\CO_{\CK_p}$ for $k\geq0$. Let $V^p=U^p\cap \hat{\CO}_{\CK}^{p,\times}$. Then CM points at level $U^pU_0(n)$ can be identified with $\CK^\times\bs \BA_{\CK}^{\infty,\times}/V^p\CO^\times_{\CK_p,p^n}$ (resp. $\CK^\times\bs \BA_{\CK}^{\infty,\times}/V^p\CO^\times_{\CK_p,p^{n-1}}$).
As we assume that $U^p$, hence $V^p$, are small enough, the fiber of the map \[\CK^\times\bs \BA_{\CK}^{\infty,\times}/V^p\CO^\times_{\CK_p,p^n}\to \CK^\times\bs \BA_{\CK}^{\infty,\times}/V^p\CO^\times_{\CK_p}\]
has size $C_0$. The case for $X_1(n)$ is similar.
\end{proof}
 Let $(A,i)$ be the (false) elliptic curve  underlying a CM point $P\in X_0(n)$. 
\begin{lem}\label{CMII} When $p$ is inert (resp. ramified), $\val_p(\Ha(A))\geq \frac{p}{p+1}$(resp. $\val_p(\Ha(A))=1/2$). When $p$ is ramified, $A$ has canonical subgroup of level $1$ and the level subgroup $H$ at $U_0(1)$-level coincides with the canonical subgroup.
\end{lem}
\begin{proof}Note the Hasse invariant only depends on the underlying $p$-divisible subgroup $\CG$. Thus the first statement follows from \cite[Lemma 3.1 \& 3.2]{AI}.  When $p$ is ramified, $\CG$ has canonical subgroup $\CC$ of level one. By Lemma \ref{CMI},  there is only one  $\CO_B$-submodule $H$ of rank $p^2$ appearing at the $U_0(1)$-level. By the functorial property of canonical subgroups and CM theory respectively, both $\CC$ and $H$ are stable under the $\CO_{\CK_p}$-action. If $\CC\cap eH=(0)$, then $\CG/eH$ has  canonical subgroup of level $p^2$, which is impossible by the functorial property of canonical subgroup and the $\CO_{\CK_p}$-action. Thus $\CC=eH$ and the stated result follows.
\end{proof}

To define the anticyclotomic $p$-adic $L$-function, we need to  evaluate $\nabla^\mu(F)$ at CM points. This requires the CM points 
to lie in the strict neighbourhood $\mathcal{X}(r')$ for the $r^\prime$ given by Proposition \ref{explicit iteration}; equivalently, $v_p(\mathrm{Ha}(A)) \leq 1/r'$. However by Lemma \ref{CMII},  the Hasse invariant of a CM point satisfies $v_p(\mathrm{Ha}(A)) \geq p/(p+1)$ (resp. $=1/2$) when $p$ is inert (resp. ramified), regardless of the level. In view of the local test vector theory, this bound prevents evaluating at CM points for Hecke characters with small $p$-adic conductor — the arithmetic characters of primary interest — and makes a straightforward generalization of \cite{AI} impossible. The following lemma (and Appendix \ref{Local test vector}) shows that we can resolve this issue by a simple translation: for any CM point $P$
and any target radius $r'$, one can find $g=g_{\ell,n,u}$ such that $T_g(P)$ lies in the half-canonical locus $\mathcal{X}_0(2n,n;\ell)_c^{\geq n}$ where the theory of nearly overconvergent forms is extended in  Subsection \ref{Half canonical}.

 Note that  for $g=\begin{pmatrix} p^{-1} & 0\\ 0 & 1\end{pmatrix}$, the $g$-translation $T_g$ induces the map \[V_p:\ X_0(n+1)\to X_0(n),\quad (A, i, \bar{\eta}, H)\mapsto (A/H[p], i, \bar{\eta}, H/H[p])\] 

\begin{lem}\label{CMIII} Take $(\ell,n,u)\in \BN\times \BN\times\BZ_p^\times$ such that $\ell\geq 2n$. Let \[g:=g_{\ell,n,u}=\begin{pmatrix} p^{1-s} & 0\\ 0 & 1\end{pmatrix}\begin{pmatrix}p^{2n-\ell} & up^{-\ell+n}\\ 0 & 1\end{pmatrix}\] where $s=2$ (resp. $1$) when $p$ is ramified (resp. inert). Then for any CM point $P\in \varprojlim_UX_U$, $T_g(P)\in \CX_0(2n,n;\ell)$.
\end{lem}
\begin{proof}Assume that $T_g(P)\in X_0(2n)$ corresponds to a quadruple $(A_g,i,\bar{\eta}, H_g)$ and let $\CC_{g,i}\subset A_g$ be the canonical subgroup of level $i$.  We claim that  
\begin{itemize}
    \item $\val_p(\Ha(A_g))=\frac{1}{(p+1)p^{\ell-1}}$(resp. $\frac{1}{2p^{\ell}}$) when $p$ is inert (resp. ramified),
    \item $H_g\cap \CC_{g,2n}=\CC_{n}$ and $A_g/\CC_{g,\ell}=A$. 
\end{itemize}
Clearly, this claim implies that $T_g(P)\in \CX_0(2n,n;\ell)_c^{\geq n}$. To see the claim, 
let $m=\ell-n$. Assume the image of $T_{s_0}P$ in $X_0(2m)$ corresponds to $(A,i,\bar{\eta},H)$ and  the point $T_h(T_{s_0}P)\in X_0(2m)$ corresponds to a quadruple $(A^\prime,i^\prime,\bar{\eta}, H^\prime)$. Here $s_0=\begin{pmatrix}p^{1-s-m} & 0\\ 0 & 1\end{pmatrix}$ and $h=\begin{pmatrix} 1 & up^{-m}\\ 0 & 1\end{pmatrix}$. Note that $H$ intersects the canonical subgroup of $A$ trivially by Lemma \ref{CMII} and the property of $V_p$. Let $\CC_{i}^\prime\subset A^\prime$ be the  level-$i$ canonical subgroup. From the property of $V_p$, it suffices to show that 
\begin{itemize}
    \item $\val_p(\Ha(A^\prime))=\frac{1}{(p+1)p^{2m-1}}$(resp. $\frac{1}{2p^{2m}}$) when $p$ is inert (resp. ramified),
    \item $H^\prime\cap \CC_{2m}^\prime=\CC_m^\prime$ and $A^\prime/\CC^\prime_{2m}\cong A$. 
\end{itemize}
Let $\tilde{A}=A/H[p^m]$ and $\tilde{H}=H/H[p^m]$, $\tilde{H}^\prime=A[p^m]/H[p^m]$.
Then $\tilde{A}[p^m]=\tilde{H}\times \tilde{H}^\prime$ and  $\Isom(\tilde{H}^\prime,\tilde{H})\cong (\BZ/p^m\BZ)^\times$. For $i\in (\BZ/p^m\BZ)^\times$, let $\rho_i$ be the corresponding isomorphism and set $\tilde{H}_i:=(\rho_u\times \id)(\tilde{H}^\prime)$. Then there exists $i$ such that $A^\prime=\tilde{A}/\tilde{H}_i$ is the false elliptic corresponding to $T_h(P)$. By \cite[Lemma 3.1 \& 3.2]{AI}, we deduce the valuation of $\val_p(\Ha(A^\prime))=\frac{1}{(p+1)p^{2m-1}}$(resp. $\frac{1}{2p^{2m}}$) when $p$ is inert (resp. ramified). 

To see $H^\prime\cap \CC_{2m}^\prime=\CC_m^\prime$,  we translate by $s_1=\begin{pmatrix}p^{-m} & 0\\ 0 &1\end{pmatrix}$. Since \[\begin{pmatrix} 1 & u/p^m  \\ 0 & 1\end{pmatrix}\begin{pmatrix} p^{-m} & 0 \\ 0 & 1\end{pmatrix}=\begin{pmatrix} p^{-m} & 0  \\ 0 & 1\end{pmatrix}\begin{pmatrix} 1 & u \\ 0 & 1\end{pmatrix},\]
the image of $T_{s_1}(T_h(T_{s_0}P)) \in X_0(m)$ is  $T_{s_1}(T_{s_0}P)$, corresponding to $(A^\prime/H^\prime[p^m],i,\bar{\eta}, H^\prime/H^\prime[p^m])$ and  also $(A/H[p^m],i,\bar{\eta}, H/H[p^m])$. Now by the Katz-Lubin theory, we deduce that \[\quad    A^\prime/\CC_m^\prime=A/H[p^m],\quad \CC_{2m}^\prime/\CC_m^\prime=A[p^m]/H[p^m],\quad H^\prime/H^\prime[p^m]=H/H[p^m].\]  This implies that $H^\prime\cap \CC_{2m}^\prime=\CC_m^\prime$ and $A^\prime/\CC_{2m}^\prime\cong A$.
\end{proof}

\subsection{$p$-adic  Hecke characters}\label{Hecke characters}

A $p$-adic Hecke character $\mu:\ \CK^\times\bs \BA_{\CK}^{\infty,\times}\to\BC_p^\times$ is called algebraic of type $(a,b)\in\BZ^2$ if $\mu_p(z)=z^a(z^c)^b$ for all $z\in 1+p^n\CO_{\CK_p}$ and some $n\in\BN$.  Attached to such an algebraic $p$-adic Hecke character, one has the $\infty$-avatar
 \[\CK^\times\bs\BA_\CK^\times\to\BC^\times,\quad z\mapsto \mu(z^{\infty})z_p^{-a}(z_p^c)^{-b}z_\infty^a\bar{z}_\infty^b\] which is an algebraic Hecke character of infinity type $(a,b)$. 
 
 Similarly, 
a $p$-adic   Hecke character $\xi:\ \BQ_{>0}^\times\bs\BA_\BQ^{\infty,\times}\to\BC_p^\times$ is called algebraic of type $a\in\BZ$ if  $\xi_p(z)=z^a$ for all $z\in 1+p^n\BZ_p$ and some $n\in\BN$. For such $\xi$, one has the $\infty$-adic avatar
$$ \BQ^\times\bs \BA^{\times}\to\BC^\times,\quad z\mapsto \xi(z^{\infty})z_p^{-a}z_\infty^a.$$

Fix an open compact subgroup $V^p\subset \hat{\CO}^{p,\times}_\CK$. Let \[\Gamma=\Gamma_{V^p}:=\CK^\times\bs \BA_\CK^{\infty,\times}/ V^p,\quad \Gamma^-=\Gamma_{V^p}^{-}=\CK^\times\bs \BA_\CK^{\infty,\times}/\BA^{\infty,\times}V^p.\]  Let $\CW(\Gamma)$ and $\CW(\Gamma^-)$ be the weight space of $\Gamma$ and $\Gamma^-$, i.e., the rigid analytic variety parameterizing continuous characters on  $\Gamma$ and $\Gamma^-$.  Let $\CW$ (resp. $\CW^{(n)}$) be the weight space of $\BZ_p^\times$ (resp. $1+p^{n}\BZ_p$).  Note that all involved groups are abstractly isomorphic to $T\times \BZ_p^t$ for some finite order group $T$  and some $t$. The weight space of $\BZ_p^t$ is isomorphic to $D^t$ where $D$ is the open disk. See \cite{ST01} for more details.

%
Let $\CW^{(n)}(\Gamma)\subset \CW(\Gamma)${\footnote{Let $\fg$ be the Lie algebra of $\Gamma$. As in \cite{ST01}, the exponential map $\fg\to\Gamma$ induces the locally analytic map \[\CW(\Gamma)(R[\frac{1}{p}])\to\Hom_{\BQ_p}(\fg, R[\frac{1}{p}]),\quad \mu\mapsto d\mu:\ \ft\mapsto \frac{d}{dt}\chi(\exp(t\ft))|_{t=0}\] 
Bearing this map in mind and by a computation using $\exp$ and $\log$, one can describe  $\CW^{(n)}(\Gamma)$ as the subvariety parameterizing characters $\mu$ whose $d\mu$ lies in the open disk with radius roughly as $p^{-n}$. For the sharp description of the radius in the $\BZ_p^\times$, we refer to the beginning of Subsection \ref{NO}.}} be the subvariety whose $\BC_p$-points consist of characters $\mu$ such that there exists $\kappa_1$,$\kappa_2\in\BC_p$  such that \[\mu(z)=\exp(\kappa_1\log(z))\exp(-\kappa_2\log(\bar{z}))\quad\forall z\in 1+p^{n}\CO_{\CK_p}\]
Note that $\cup_n\CW^{(n)}(\Gamma)=\CW(\Gamma)$ by the fact $U_n^+\cap U_n^-=\{1\}$ and \[1+p^{n+1}\CO_{\CK_p}\subset U_n^+\times U_n^-\subset 1+p^{n}\CO_{\CK_p}\]where $U_n^-=\{x\in 1+p^{n}\CO_{\CK_p}| x\bar{x}=1\}$ and $U_n^+:=1+p^{n}\BZ_p$.


We want to define a weight map $w=(w_1, w_2):\ \CW(\Gamma)\to \CW\times\CW$ so that if  $\mu^{(\infty)}$ has infinite type $(b,a-b)$, then $w(\mu)=(a,b-a)$.

 Restriction to $ \BZ_p^\times\subset \CO_{\CK_p}^\times$ gives $w_1$, which is clearly analytic. Note that  for any element $\xi\in \CW$, the fiber $w_{1}^{-1}(\xi)$ is a $\CW(\Gamma^-)$-torsor. 

When $p$ splits in $\CK$, one can define $w_{2}$ by restricting to  $\CO_{\CK_p}^\times$ and composing with the injection \[\BZ_p^\times\hookrightarrow \CO_{\CK_p}^\times,\ z\mapsto (1,z^{-1}).\]
Clearly  $w_2$ is actually analytic. 

When $p$ is non-split in $\CK$, we define  $w_2$ on $\CW^{(n)}(\Gamma)$ by
\[\mu\mapsto w_2(\mu): z\mapsto\exp(\kappa_2\log(z))\]
It is only locally analytic since the definition involves the exponential map.
\begin{lem}\label{weight} One has the weight map  \[w=(w_{1},w_{2}):\ \CW^{(n)}(\Gamma)\to \CW\times\CW^{(n)}\]
such that  if $\mu^{(\infty)}$ has infinite type $(b,a-b)$, then $w(\mu)=(a,b-a)$.
\end{lem}


\subsection{Toric period and central $L$-value}\label{toric period}
For any element $\xi\in \CW$, set $\CW^{(n)}(\xi)=w_{1}^{-1}(\xi)\cap \CW^{(n)}(\Gamma)$. When $\xi=\omega^{-1}|\cdot|^k$, denote $\CW^{(n)}(\xi)$ by $\CW^{(n)}(k)$. Fix a connected component of $\CW(k)$ and let $ \Sigma(k)$ (resp. $\Sigma^+(k)$)  be the subset of algebraic Hecke characters in this component (resp. moreover of infinite type $(b,k-b)$ with $b\geq k$).

Let $\sigma$ be a cuspidal automorphic  $\GL_2(\BA)$-representation with  central character $\omega|\cdot|^{-k}$ and lowest weight $k\geq2$.
 For any $\mu\in\Sigma(k)$, the $L$-function $L(s,\sigma\times\mu)$ is self-dual and satisfies a functional equation 
\[L(s,\sigma\times\mu)=\epsilon(s,\sigma\times\mu)L(1-s,\sigma\times\mu),\quad \epsilon(1/2,\sigma\times\mu)=\prod_v\epsilon(1/2,\sigma_{v}\times\mu_v)\in\{\pm1\}\]
 By construction,  \begin{itemize}
     \item $\epsilon_v(1/2,\sigma_v\times\mu_{v})$ is invariant for $v\nmid p\infty$  when $\mu\in\Sigma(k)$,
     \item  $\epsilon_\infty(1/2, \sigma_{\infty}\times \mu_{\infty})=1$ if $\mu\in\Sigma^+(k)$.
 \end{itemize} 
Assume that there exists a unique indefinite quaternion algebra $B$ over $\BQ$ such that $B_p$ is split and $\epsilon(B_v)\mu_{v}(-1)\chi_{\CK}(-1)=\epsilon(1/2,\sigma_{v}\times\mu_{v})$ for  $v\nmid p\infty$. This includes the case 
\begin{itemize}
\item $\epsilon(1/2,\sigma\times\mu)=1$ for some $\mu\in\Sigma^+(k)$ and
    \item $\sigma_p$ is a principal series or $p$ splits in $\CK$. 
\end{itemize}
Let $\pi=\otimes\pi_v$ be the automorphic $B^\times$-representation whose Jacquet-Langlands transfer is $\sigma$.

The starting point of the construction of anticyclotomic $p$-adic $L$-function  is the Waldspurger formula relating the period and central $L$-value for Rankin pair $(\sigma,\mu)$. 
For any $\varphi\in\pi$ and $\mu\in\Sigma^+(k)$, set 
\[P_\mu(\varphi):=\int_{\BA^{\infty,\times}\CK^\times\bs \BA_{\CK}^{\infty,\times} }\varphi(t)\mu(t)d^\times t=\frac{1}{\sharp \BA^{\infty,\times}\CK^\times\bs \BA_{\CK}^{\infty,\times}/V}\sum_{\BA^{\infty,\times}\CK^\times\bs \BA_{\CK}^{\infty,\times}/V}\varphi(t)\mu(t)\]
The Waldspurger formula states that for $\varphi=\otimes\varphi_v\in \pi$ and $\varphi^\vee=\otimes \varphi_v^\vee\in \pi^\vee$,
\[P_\mu(\varphi)P_{\mu^{-1}}(\varphi^\vee)=\frac{\zeta(2)L(1/2,\sigma\times\mu)}{8L(1,\chi_{\CK})^2L(1,\sigma,\ad)}\prod_v\alpha_v(\varphi_v,\varphi_v^\vee;\mu_v)\]
\[\alpha_v(\varphi_v,\varphi_v^\vee;\mu_v)=\frac{L_v(1,\chi_\CK)L_v(1,\sigma,\ad)}{\zeta_v(2)L_v(1/2,\sigma_v\times\mu_v)}\int_{\CK_v^\times/\BQ_v^\times}(\pi(t)\varphi_v,\varphi_v^\vee)\mu_v(t)d^\times t.\]
Here all $L$-functions are complete,
\begin{itemize}
\item $d^\times t$ is the Haar measure on $\BA^{\infty,\times}\CK^\times\bs \BA_{\CK}^{\infty,\times}$ with total volume $1$,
\item  $d_v^\times t$ is the Haar measure on $\BQ_v^\times\backslash\CK_v^\times$ such that  $\prod_v d^{\times}_vt=2L(1,\chi_\CK)dt$ and  $\Vol(\BZ_v^\times\bs \CO_{\CK_v}^\times, d^\times t_v)=1$ for almost all $v$.
\item $(-.-)$ is the Petersson inner product on $\pi\times\pi^\vee$ given by \[(\varphi,\varphi^\vee)=\int_{B^\times\bs \BB^\times/\BA^\times}\varphi(g)\varphi^\vee(g) dg\]
where $dg$ is the Haar measure on $B^\times\bs \BB^\times/\BA^\times$ with total volume $2$,
\item $(-,-)_v$ is an invariant pairing on $\pi_v\times\pi_v^\vee$ such that $\prod_v(-,-)_v=(-,-)$,
\item $V$ is a proper level subgroup fixing $\varphi$ and $\mu$.
\end{itemize}
 Take $J\in B$ such that 
 $Jt^c=tJ$ for all $t\in \CK$ and $J^2\in \BQ^\times$. Note that $\mu(t^c)=\mu^{-1}(t)\omega^{-1}|tt^c|^k$. Thus  \begin{align*}P_\mu(\varphi)&=\int_{\CK^\times\bs \BA_{\CK}^{\infty,\times}/\BA^{\infty,\times}}\varphi(t^c)\mu(t^c)d^\times t=\int_{\CK^\times\bs \BA_{\CK}^{\infty,\times}/\BA^{\infty,\times}}(\varphi\otimes\omega^{-1}|\cdot|^k)(t^c)\mu^{-1}(t)d^\times t\\
 &=\int_{\CK^\times\bs \BA_{\CK}^{\infty,\times}/\BA^{\infty,\times}}(\varphi\otimes\omega^{-1}|\cdot|^k)(tJ)\mu^{-1}(t)d^\times t
=P_{\mu^{-1}}(\pi(J)\varphi\otimes\omega^{-1}|\cdot|^k)
\end{align*}

Fix $\varphi_1=\otimes_v \varphi_{1,v} \in\pi$ (resp. $\varphi_3=\otimes_v \varphi_{3,v} \in\sigma$) of weight $k$ and $\varphi_2=\otimes_v\varphi_{2,v}\in\pi^\vee$ (resp.  $\varphi_4=\otimes_v \varphi_{4,v} \in\sigma^\vee$) of weight $-k$ such that $(\varphi_1, \varphi_2)\neq0$ (resp. $(\varphi_3, \varphi_4)\neq0$).  Assume the $\psi_\BQ$-Whittaker functional (resp. $\bar{\psi}_\BQ$-Whittaker functional) of $\varphi_3$ (resp. $\varphi_4$) is $\otimes_v W_{3,v}$ (resp. $\otimes_v W_{4,v}$) (Here $\psi_\BQ$ is an additive character on $\BQ\bs \BA$). Consider the invariant pairing \[\langle-,-\rangle_v:\ \sigma_v\times\sigma_v^\vee\to\BC,\quad (W_v, W_v^\vee)\mapsto \frac{\int_{\BQ_v^\times} W_v W_v^\vee(y) d^\times y}{L(1,\sigma_v,\ad)L(1,1_v)L(2,1_v)^{-1}}\]
Here we fix the Haar measure $d^\times y$ as in \cite{CST}. For $\varphi_v\in\pi_v$, let \[\tilde{\beta}(\varphi_v, \mu_v;\varphi_{1,v},\varphi_{2,v};W_{3,v},W_{4,v}):=\frac{\alpha_v(\varphi_v,\pi(J)\varphi_v\otimes\omega_v^{-1}|\cdot|_v^k;\mu_v)\langle W_{3,v}, W_{4,v}\rangle_v}{(\varphi_{1,v}, \varphi_{2,v})_v}\]
The following variant of Waldspurger formula is suited for $p$-adic interpolation:
\begin{prop}\label{Wal}For $\varphi=\otimes_v\varphi_v\in\pi$
\[P_\mu^2(\varphi)=\frac{L(1/2,\sigma,\mu)}{4L(1,\chi_{\CK})^2}\frac{(\varphi_1,\varphi_2)}{(\varphi_3,\varphi_4)}\prod_v\tilde{\beta}(\varphi_v,\mu_v;\varphi_{1,v},\varphi_{2,v};W_{3,v},W_{4,v})\]
\end{prop}

One can also consider the toric period integral for Eisenstein series. In this case $B=M_2(\BQ)$ and we shall fix a $\BQ$-basis $\{1,\tau_0\}$ of $\CK$. We shall choose the embedding
\[\CK\to M_2(\BQ),\quad a+b\tau_0\mapsto \begin{pmatrix} a & -b\\b N_{\CK/\BQ}(\tau_0) &
a+b \RTr_{\CK/\BQ}(\tau_0) \end{pmatrix}\] Moreover, assume  $(-1)^{k}=\omega_\infty(-1)$ and  $\omega\neq1$ when $k=2$. Set $\omega_{1-k}=\omega|\cdot|^{1-k}$. View $\omega_{1-k}$ as a character on the  upper Borel $P\subset G$ by the rule 
$\begin{pmatrix}a & b \\ 0 & d\end{pmatrix}\mapsto \omega_{1-k}(a)$. Under these assumptions, the normalized parabolic induction $I_{P(\BR)}^{G(\BR)}(\omega_{1-k})_\infty$ has an irreducible quotient whose minimal weight is $k$. For almost all $v$, take $\varphi_v^\circ\in I_{P(\BQ_v)}^{G(\BQ_v)}(\omega_{1-k})_v$ to be the spherical vector with $\varphi_v^\circ(1)=1$. Define the restricted tensor product $I_{P(\BA)}^{G(\BA)}\omega_{1-k}=\otimes^\prime_v I_{P(\BQ_v)}^{G(\BQ_v)}(\omega_{1-k})_v$ with respect to $\varphi_v^\circ$. For any $\varphi \in I_{P(\BA)}^{G(\BA)}\omega_{1-k}$, let $\varphi_s\in I_{P(\BA)}^{G(\BA)}\delta^{s-1/2}\omega_{1-k}$ be the flat section $\varphi_s(bk)=\varphi(bk)\delta^{s-1/2}(b)$ where $b\in P(\BA)$, $\delta$ is the modulus character
 and $k$ lies in the standard maximal open compact subgroup. 
When $\Re(s)>\frac{k+1}{2}$, the infinite sum $|\det(g)|_\BA^{-1/2}\sum_{\gamma\in P(\BQ)\bs G(\BQ)}\varphi_s(\gamma g)$ converges absolutely and defines the normalized mirabolic Eisenstein series $E(g,\varphi_s)$. It extends  to a meromorphic function on the whole $s$-plane which has no pole at $s=1/2$ under our assumption.  Set $E_\varphi:=E(g,\varphi_s)|_{s=1/2}$. Consider the toric period integral  for $\mu\in\Sigma(k)$:
\[\CP_{\mu}(\varphi)=\int_{\BA^{\infty,\times}\CK^\times\bs \BA_{\CK}^{\infty,\times}}E_\varphi(t)\mu(t)d^\times t\]
For each $v$, consider the local linear form 
\[\lambda_{\mu_v,s}:  I_{P(\BQ_v)}^{G(\BQ_v)}(\omega_{1-k})_v\to \BC,\quad \varphi_v\mapsto \frac{L_v(2s,(\omega_{1-k})_v^{-1})}{L_v(s-1/2,\mu_v)}\int_{\BQ_v^\times\bs \CK_v^\times}\varphi_{v,s}(t)\mu_v(t) |t|_{\CK_v}^{-1/2}d^\times t_v \] 
Note that for almost all $v$, $\lambda_{\mu_v,s}(\varphi_v^\circ)=1${\footnote{ One can see this from
\begin{itemize}
    \item   $L_v(2s,(\omega_{1-k})_v^{-1})=L_v(s-1/2,\mu_v)$ and $\BQ_v^\times \bs \CK_v^\times=\BZ_v^\times\bs \CO_{\CK_v}^\times$ when $v$ is inert.
    \item an explicit calculation using 
the Jacquet section when $v$ is split
\[F_\varphi:\ g\in G(\BQ_v)\mapsto |\det(g_v)|^s \omega_{1-k}(\det(g)) \int_{\BQ_v^\times}\varphi([0,y]g)\omega_{1-k}(y) |y|^{2s} d^\times y\quad \varphi\in \CS(\BQ_v^2)\]
\end{itemize}}}.
\begin{prop}\label{Wal-Eis}Set $\lambda_{\mu_v}=\lambda_{\mu_v,s}|_{s=1/2}$. Then for any  pure tensor $\varphi=\prod_v \varphi_v$,  
\[\CP_{\mu}(\varphi)=\frac{L(0,\mu)}{2L(1,\chi_\CK)L(1,\omega_{1-k}^{-1})}\prod_v\lambda_{\mu_v}(\varphi_v)\] 
\end{prop}
\begin{proof}The embedding $\CK^\times\hookrightarrow G(\BQ)$ induces a bijection  $\BQ^\times\bs \CK^\times\cong P(\BQ)\bs G(\BQ)$. Thus 
\begin{align*}
\CP_{\mu}(E(-,\varphi_s)):&=\int_{\BA^\times\CK^\times\bs\BA_\CK^\times}E(t,\varphi_s)\mu(t)d^\times t=\int_{\BA^\times\bs \BA_\CK^\times}\varphi_s(t)\mu(t)|t|_\CK^{-1/2}d^\times t\\
&=\frac{L(s-1/2, \mu)}{2L(1,\chi_\CK)L(2s,\omega_{1-k}^{-1})}\prod_v\frac{L_v(2s,(\omega_{1-k})_v^{-1})}{L_v(s-1/2,\mu_v)}\int_{\BQ_v^\times \bs \CK_v^\times}\varphi_{v,s}(t)\mu_v(t)|t|_{\CK_v}^{-1/2}d^\times t_v
\end{align*}
Taking evaluation at $s=1/2$, the desired result follows.
\end{proof}

The period integral $P_\mu(\varphi)$ admits an algebraic interpretation.  Let $\tau$ be the standard coordinate on $\BC-\BR$. Let $dz$ be the standard differential on $\BC$ and $d\tau$ be the standard differential on $\BC-\BR$.  Consider the local system \[H:\ B^\times \bs \BC^2\times (\BC-\BR)\times \BB^{\infty,\times}/U\to  B^\times \bs  (\BC-\BR)\times \BB^{\infty,\times}/U=Y_U(\BC)\]
where $\gamma[(a_1,a_2)^t,\tau,g]=[\gamma (a_1,a_2)^t, \gamma\cdot \tau, \gamma^\infty g]$. 
Then by  \cite[Lemmas 2.4.11 \& 2.4.12]{LZZ18}, $\BH=H\otimes \CO_{X_U}$ and $\uomega$ is generated by sections whose value at $\tau$ is $(\tau,1)^t$.  The Kodaira-Spencer isomorphism induces $\KS((2\pi id z)^{\otimes 2})=2\pi i d\tau$. 
For any automorphic form $\varphi$ such that \[\varphi(gk_\theta z)=z^{-m}e^{im\theta}\varphi(g) \quad\ \forall\  k_\theta=\begin{pmatrix}\cos\theta & \sin\theta\\ -\sin\theta & \cos\theta\end{pmatrix}\in\SO_2(\BR),\ z\in\BR_{>0}^\times,\]one can  attach an analytic  section $f_\varphi\in H^0(X_U(\BC),\uomega^m)$ by the rule: 
\[f_\varphi(g\cdot i)(2\pi i dz)^{-m}:=\varphi((g,1))(ci+d)^m,\quad g=\begin{pmatrix}a & b\\ c & d\end{pmatrix}\in \GL_2(\BR)=B_\BR^\times.\]
Let $\delta:=S_H\circ \KS^{-1}\circ\nabla$ be the composition of \begin{itemize}
\item the Hodge splitting $S_H:\ \BH^{ra}\to \uomega^{ra}$ of real analytic sheaves,
\item the Kodaira-Spencer isomorphism $\KS: \uomega^2=\Omega_X$, 
\item and the Gauss-Manin connection $\nabla$ on $\uomega^{k}\subset \Sym^k(\BH)$.
\end{itemize}
As explained in \cite[Section 1.2]{BDP} and \cite[Section 2.4]{LZZ18}, 
 \begin{itemize}
\item  the assignment $\varphi\mapsto f_\varphi$ is $\BB_f^{\times}$-equivariant,
\item $f_{\Delta \varphi\otimes|\cdot|^{-1}}=\delta f_\varphi$ where $\Delta$ is the weight raising operator $-\frac{1}{8\pi i}\begin{pmatrix} 1 & i\\ i & -1\end{pmatrix}$.
 \end{itemize}

In the following, we assume that  either $p$ splits in $\CK$ or $\sigma_p$ is principal series and $p$ is non-split. Take $\varphi=\otimes_v\varphi_v\in \pi$ (resp.  $I_{P(\BA)}^{\GL_2(\BA)} \omega_{1-k}$ in the Eisenstein case)  such that
\begin{itemize}
    \item $\varphi_\infty$ is holomorphic of weight $k$,
  \item $\varphi_p=\Vol(1+p^n\BZ_p)^{-1}\Char_{1+p^n\BZ_p}$ (in the Kirillov model) for $n\gg 0$.
\end{itemize}
When $p$ is non-split in $\CK$, take $g_{\ell,n}$ as in Subsection \ref{NC} with $\ell\gg2n$. Then for $\mu\in\Sigma^+(k)$ (set $\nu=w_2(\mu)$)
\begin{itemize} 
\item $\Delta^{\nu}\varphi_\infty$ is a test vector for $(\pi_\infty,\mu_\infty)$ (resp. $(I_{P(\BR)}^{\GL_2(\BR)}(\omega_{1-k})_\infty,\mu_\infty)$);
\item  $\pi_p(g_{\ell,n})\varphi_p$ (resp. $\varphi_p$) is a test vector for $(\pi_p,\mu_p)$ (and $(I_{P(\BQ_p)}^{\GL_2(\BQ_p)}(\omega_{1-k})_p,\mu_p)$) for $p$ non-split (resp. split) by Appendix \ref{Local test vector}.
\end{itemize}
Let $f_\varphi\in H^0(X_U, \uomega^k)$ be the global section attached to $\varphi$ (resp. $E_\varphi$).

 For each $t\in \BA_{\CK}^{\infty,\times}$, let $P_t=[tg_{\ell,n}]$ (resp. $P_t=[t]$) be the corresponding CM point in $\varprojlim_{U}X_U$ when $p$ is non-split (resp. split). Let $A_{t}$ be the false elliptic curve underlying $P_t$. All these $A_t$ are defined over number fields by CM theory.
For $A=A_1$, take a generator  $\omega$ of $\uomega_{A}$ over the defining field and let $\Omega_\infty\in\BC^\times$ such that $\omega=\Omega_\infty (2\pi i dz)$. Note that by CM theory, for each $A_t$, $\omega_t:=\Omega_\infty (2\pi i dz)$ is a generator of $\uomega_{A_t}$ over the defining field.

For $\mu\in \Sigma^+(k)$, set $\nu=w_2(\mu)$ and 
\[P_\mu^{\alg}(\varphi):= \frac{1}{\sharp \BA^{\infty,\times}\CK^\times\bs \BA_{\CK}^{\infty,\times}/V}\sum_{\BA^{\infty,\times}\CK^\times\bs \BA_{\CK}^{\infty,\times}/V}\delta^\nu f_\varphi(P_t) \omega_t^{-k-2\nu}\mu_\nu(t)\]
Since the Hodge splitting on CM false elliptic curves coincide with the CM splitting, $P_\mu^{\alg}(\varphi)\in\bar{\BQ}$. Recall that $s=2$ (resp. $s=1$) when $p$ is ramified (resp. inert)
\begin{lem}\label{algebracity} For $\mu\in \Sigma^+(k)$,  \[P_\mu^{\alg}(\varphi):=\begin{cases}\frac{p^{(\ell-2n+s-1)\nu}P_\mu(\Delta^\nu\varphi(-g_{\ell,n}))}{\Omega_\infty^{2\nu+k}} & p\ \text{non-split}\\
\frac{P_\mu(\Delta^{\nu}\varphi)}{\Omega_\infty^{2\nu+k}} & p\ \text{split}\end{cases}\]
\end{lem}
\begin{proof}By construction,\begin{align*}
    P_\mu^{\alg}(\varphi)&= \frac{1}{\omega_\infty^{k+2\nu}\sharp \BA^{\infty,\times}\CK^\times\bs \BA_{\CK}^{\infty,\times}/V}\sum_{\BA^{\infty,\times}\CK^\times\bs \BA_{\CK}^{\infty,\times}/V}\delta^\nu f_\varphi(P_t) (2\pi i dz)^{-k-2\nu}\mu_\nu(t)\\
    &=\begin{cases} \frac{P_{\mu_\nu}(\Delta^{\nu}\varphi\otimes|\cdot|^{-\nu}(-g_{\ell,n}))}{\omega_\infty^{k+2\nu}} &\ p\ \text{non-split}\\
\frac{P_{\mu_\nu}(\Delta^{\nu}\varphi\otimes|\cdot|^{-\nu})}{\omega_\infty^{k+2\nu}} &\ p\ \text{split}\end{cases}
\end{align*}
Since $P_{\mu_\nu}(\varphi\otimes|\cdot|^{-\nu})=P_{\mu}(\varphi)$ for any $\varphi\in \pi$, we deduce the stated result.
\end{proof}

\subsection{Anticyclotomic $p$-adic $L$-functions}
Take a large enough coefficient field $L/\BQ_p$ with ring of integers $\CO_L$. 
Take the smooth model over $\CO_L$ of $A_t$, which we still denote by $A_t$. Assume that $\omega$ generates $\uomega_A$ over $\CO_L$. Let $\omega_{p}$ be a generator of the modified sheaf $\Omega_{A}$ which reduces to the marked section module $p\delta_{A}$ and take $\Omega_p\in\BC_p^\times$ such that $\Omega_p\omega_p=\omega$. By CM theory, $\omega_{t,p}:=\Omega_p^{-1}\omega_t$ is a generator of $\Omega_{A_t}$ for any $t\in \BA_{\CK}^{\infty,\times}$. When $p$ splits in $\CK$, all $P_t$ lie in the ordinary locus (i.e.,  $A_t$ is ordinary) and $\Omega_p$ is a $p$-adic unit. When $p$ is non-split in $\CK$, none of $P_t$ lie in the ordinary locus and by Lemma \ref{CMIII}, $v_p(\Omega_p)=\begin{cases} -\frac{1}{p^{\ell-1}(p^2-1)} &\ p\ \text{inert}\\
-\frac{1}{2p^{\ell}(p-1)} &\ p\  \text{ramified}\end{cases}$.

We now define the one-variable and two-variable
anticyclotomic $p$-adic $L$-function.  Take $\varphi=\otimes_v\varphi_v\in\pi$ (resp.  $I_{P(\BA)}^{\GL_2(\BA)} \omega_{1-k}$ in the Eisenstein case)  such that
\begin{itemize}
    \item $\varphi_\infty$ is holomorphic of weight $k$,
    \item $\varphi_p=\Vol(1+p^n\BZ_p)^{-1}\Char_{1+p^n\BZ_p}$ (in the Kirillov model) for $n\gg0$. 
\end{itemize}

The case $p$ is split in $\CK$ is well-understood. Let $\delta_{AS}=S_{ur}\circ \nabla$ be the Atkin-Serre operator on the ordinary locus. Here $S_{ur}$ is the unit root splitting on the ordinary locus (see \cite[Lemma 2.3.1]{LZZ18}). Note that  $S_{ur}$ induces the CM splitting on ordinary (false) elliptic curves.

In the case  $p$ is non-split in $\CK$, we slightly generalize \cite{AI}. 
Let $A_t^\prime=A_t/\CC_{\ell-1}$ and $\lambda_t:\ A_t^\prime\to A_t$ be the dual isogeny. Let $\omega_t^\prime=\lambda_t^*(\omega_{t,p})$, which is a generator of $\Omega_{A^\prime}$.  
Note that by Lemma \ref{CMIII}, $A_t^\prime$ has CM by $\CO_{pd}:=\BZ+pd\CO_{\CK}$ for some positive integer $d$ prime to $p$. View $\CK$ as a subfield of $L$ and set \[\bar{\Omega}_{A_t^\prime}=\{v\in \BH^\sharp_{A_t^\prime}| x\cdot v=\bar{x}v,\ \forall x\in \CO_{pd}\}\] As we can choose $\ell\gg0$, the argument in  \cite[Lemma 5.1]{AI} shows that $\lambda_t^*(\BH_{A_t}^\sharp)\subset\tilde{\BH}_{A_t^\prime}:=\Omega_{A_t^\prime}\oplus \bar{\Omega}_{A_t^\prime}$. For any $p$-adic weight $\kappa$, $\omega^\prime$ induces an isomorphism $v_{\omega_t^\prime}:\ \fm^{\kappa}_{A_t^\prime}\cong\CO_L$. Moreover by Lemma \ref{Canonical Splitting}, the  CM splitting $\tilde{\BH}_{A_t^\prime}\to\Omega_{A_t^\prime}$ induces a splitting $\Psi_t:\ \BW^\kappa(\tilde{\BH}_{A_t^\prime}^\sharp, s_{\HT})\to \fm^{\kappa}_{A_t^\prime}$.

For each $t$, view the CM point $P_t$ as a map from $L$ to $\CX$ and denote the evaluation $v_{\omega_t^\prime}\circ\Psi_t\circ \lambda_t^*\circ P_t^*(\nabla^{\nu}f_\varphi)$ by $\delta_{AS}^\nu f_\varphi(P_t)$. For any $\nu\in\CW^{(n)}$, take any lift $\tilde{\nu}\in \CW$ and consider the value  $\delta_{AS}^{\tilde{\nu}} f_\varphi(P_t)\mu_{\tilde{\nu}}(t)$. Note that for  $\xi$ finite, $\delta_{AS}^\xi f_\varphi$ is given by the twist-by-$\xi$ operation. Thus as  \cite[Lemma 2.1.16]{LZZ18},   our specific choice of $\varphi_p$ implies that the value actually is independent of the lift and we shall denote it again by $\delta_{AS}^{\nu} f_\varphi(P_t)\mu_{\nu}(t)$.

\begin{defn}\label{one-variable}Let $\nu=w_2(\mu)$. Let $\CP_\varphi$ be the function on $\CW^{(n)}(k)$ given by 
\[\mu\mapsto \frac{1}{\sharp \BA^{\infty,\times}\CK^\times\bs \BA_{\CK}^{\infty,\times}/V}\sum_{\BA^{\infty,\times}\CK^\times\bs \BA_{\CK}^{\infty,\times}/V}\delta_{AS}^{\nu}f_\varphi(P_t)\mu_\nu(t)\]
\end{defn}
We now define the two-variable anticyclotomic $p$-adic
   $L$-function. We shall assume $\pi_p$ is non-supercuspidal in the cuspidal case. Take a small enough neighborhood $k\in\CU\subset \CW$. Then one can deform 
$f_\varphi$ into a $p$-adic family $F$ on $\CU$ by the following steps:
\begin{itemize}
\item deform the $p$-stabilized form in $\pi$ into a Coleman family,
\item apply the $p$-depletion procedure and take linear combination of twists by finite order characters.
\end{itemize}
Note that in the Eisenstein case, we can actually take the ordinary stabilization and consider the Hida family. Set $\CW^{(n)}(\Gamma)_\CU=\CW^{(n)}(\Gamma)\times_{\CW}\CU$ for the weight map  $w_1:\ \CW(\Gamma)\to \CW$. Upon shrinking $\CU$, we may assume  the automorphic representation generated by $F_{\kappa}$ for all classical weight $\kappa\in \CU$ has $p$-conductor $<n$.
\begin{defn}\label{two-variable}For $\mu\in \CW^{(n)}(\Gamma)_U$, set $w(\mu)=(\kappa,\nu)$. Let $\CP_F$ be the function on $\CW^{(n)}(\Gamma)_U$ given by 
\[\mu\mapsto  \frac{1}{\sharp \BA^{\infty,\times}\CK^\times\bs \BA_{\CK}^{\infty,\times}/V}\sum_{\BA^{\infty,\times}\CK^\times\bs \BA_{\CK}^{\infty,\times}/V}\delta_{\kappa,AS}^{\nu}F_\kappa(P_t)\mu_\nu(t)\]
\end{defn}
Note that in the cuspidal case, $C(k,\nu):=\tilde{\beta}(\nabla^\nu\varphi_\infty,\nabla^\nu\varphi_\infty,\mu_\infty,\varphi_{1,\infty},\varphi_{2,\infty};W_{3,\infty},W_{4,\infty})$  depends only on $(k,\nu)$. In the Eisenstein case, $C(k,\nu):=\lambda_{\mu_v}(\Delta^\nu\varphi_v)$ depends only on $(k,\nu)$. Adapting \cite[Proposition 5.6  \& 6.4]{AI}, we deduce from Theorem \ref{Wal} and Proposition \ref{Wal-Eis} that
\begin{prop}\label{Int} The function $\CP_{\varphi}$ is locally analytic on $\CW(k)$ and for $\mu\in \Sigma^+(k)$, \[\CP_{\varphi}(\mu)=\Omega_p^{2\nu+k}P_\mu^{\alg}(\varphi).\]
The function $\CP_{F}$ is locally analytic on $\CW^{(n)}(\Gamma)_\CU$ and for classical weight $\kappa\in \CU$,  $\CP_F(\kappa)=\CP_{F_\kappa}$. 

Moreover, in the cuspidal case (for $p$ non-split and split respectively) \[\frac{\CP^2_\varphi(\mu)}{\Omega_p^{4\nu+2k}}=\begin{cases}\frac{p^{(\ell-2n+s-1)2\nu}L(1/2,\sigma,\mu)}{4L(1,\chi_{\CK})^2\Omega_\infty^{4\nu+2k}}\frac{(\varphi_1,\varphi_2)}{(\varphi_3,\varphi_4)}C(k,\nu)\prod_{v<\infty}\tilde{\beta}(\pi_v(g_{\ell,n})\varphi_v,\mu_v;\varphi_{1,v},\varphi_{2,v};W_{3,v},W_{4,v})\\
\frac{L(1/2,\sigma,\mu)}{4L(1,\chi_{\CK})^2\Omega_\infty^{4\nu+2k}}\frac{(\varphi_1,\varphi_2)}{(\varphi_3,\varphi_4)}C(k,\nu)\prod_{v<\infty}\tilde{\beta}(\varphi_v,\mu_v;\varphi_{1,v},\varphi_{2,v};W_{3,v},W_{4,v}) \end{cases}\]
In the Eisenstein case, \[\frac{\CP_\varphi(\mu)}{\Omega_p^{2\nu+k}}=\begin{cases}\frac{p^{(\ell-2n+s-1)\nu}L(0,\mu)}{2L(1,\chi_{\CK})L(1,\omega_{1-k}^{-1})\Omega_\infty^{2\nu+k}}C(k,\nu)\prod_{v<\infty}\lambda_{\mu_v}(\varphi_v(-g_{\ell,n})) & p\ \text{non-split}\\
\frac{L(0,\mu)}{2L(1,\chi_{\CK})L(1,\omega_{1-k}^{-1})\Omega_\infty^{2\nu+k}}C(k,\nu)\prod_{v<\infty}\lambda_{\mu_v}(\varphi_v) & p\ \text{split}\end{cases}\]
Here for $v\neq p$, we understand $g_{\ell,n}=1$.
\end{prop}
Note that on the chosen connected component of $\CW(k)$,
\begin{itemize}
\item  for $v\nmid p\infty$ non-split in $\CK$, $\mu_v\mapsto \tilde{\beta}(\varphi_v,\mu_v;\varphi_{1,v},\varphi_{2,v};W_{3,v},W_{4,v})$ (resp. $\mu_v\mapsto \lambda_{\mu_v}(\varphi_v)$) is  constant by construction;
\item for $v\nmid p\infty$ split in $\CK$, the function $\mu_v\mapsto \tilde{\beta}(\varphi_v, \mu_v;\varphi_{1,v},\varphi_{2,v};W_{3,v},W_{4,v})$ is regular by \cite[Lemma 4.1.8]{LZZ18}. Actually, it is essentially the local epsilon-factor (thus a unit in the Iwasawa algebra)  for  specific $\varphi_v$ (see Appendix \ref{Local test vector II}.) Similar results holds for $\mu_v\mapsto \lambda_{\mu_v}(\varphi_v)$.
\end{itemize}
\begin{remark}When $p$ splits in $\CK$,   one can implement all constructions in the world of $p$-adic modular forms and deduce that $\CP_\varphi$ lies in the Iwasawa algebra from the effect of $\delta_{AS}$ on $q$-expansion/Serre-Tate expansion. For more details, see \cite{How20}. Following \cite{LZZ18} (particularly \cite[Lemma 4.1.9]{LZZ18}),
we define the primitive anticyclotomic $p$-adic $L$-function as \[\CL_{p,\pi}(\mu):=\frac{\CP^2_\varphi(\mu)}{\prod_{v\nmid \infty}\tilde{\beta}(\varphi_v,\mu_v;\varphi_{1,v},\varphi_{2,v};W_{3,v},W_{4,v})}.\]
Note that this $\CL_{p,\pi}(\mu)$ does not depend on $n$ and glues to a bounded function on $\CW(k)$. 
\end{remark}
\begin{remark}
When $p$ is non-split in $\CK$,
\begin{itemize}
    \item the complex period $\Omega_\infty$ and the $p$-adic period $\Omega_p$  depend on $\ell$. Varying $\ell$, one deduces from the continuity of $\CP_\varphi(\mu)$ and Theorem \ref{Test vector} that $\val(\Omega_{p,\ell}/\Omega_{\infty,\ell})-\ell/2$ is constant. We will discuss this interesting fact in details elsewhere. 
    \item if $\sigma_p=\St_p\otimes\xi_p$ is Steinberg and $\xi_p\circ N_{\CK_p/\BQ_p}\mu_p=1$, $\CP_\varphi$ is identically zero in a small neighborhood of $\mu\in \Sigma^+(k)$. We shall discuss  this new exceptional zero phenomenon elsewhere.
\end{itemize}
Note that the local period integral $\mu_p\mapsto \tilde{\beta}(\pi_p(g_{\ell,n})\varphi_p,\mu_p;\varphi_{1,p},\varphi_{2,p};W_{3,p},W_{4,p})$ is not bounded on $\CW(k)$. One may define the primitive anticyclotomic $p$-adic $L$-function as \[\CL_{p,\pi}(\mu):=\frac{\CP^2_\varphi(\mu)}{\prod_{v\nmid p\infty}\tilde{\beta}(\varphi_v,\mu_v;\varphi_{1,v},\varphi_{2,v};W_{3,v},W_{4,v})},\]
which is only locally analytic on $\CW(k)$.

\end{remark}
\begin{remark}We sketch how to define the  primitive two-variable anticyclotomic $p$-adic $L$-function and will report the construction in detail elsewhere. Let $S$ be a finite set of places away from $p\infty$ containing all ramified places of $\pi$. Deform the $p$-stabilized form in $\pi$ (resp. $\pi^\vee$) into a Coleman family $\varPhi_1$ (resp. $\varPhi_2$). Consider  the $B_S^\times$-smooth representations $\Pi_S$ (resp. $\Pi_S^\vee$) generated by $\varPhi_1$ (resp. $\varPhi_2$). By the local-global compatibility in families (see \cite[Section 3 \& 4]{Dis22}), one has $\Pi_S=\otimes_{v\in S}\Pi_v$ (resp. $\Pi_S^\vee=\otimes_{v\in S}\Pi_v^\vee$) for  smooth $B_v^\times$-representations $\Pi_v$ (resp. $\Pi_v^\vee$) of pure local origin. For each $v$, fix a pairing on $\Pi_v\times\Pi_v^\vee$. Note that $F$  is the $p$-depletion (and linear combination of twist-by-finite order characters) of some $\otimes_v\varPhi_v\in \Pi_S$. Assume $\varPhi_{i}=\otimes_v\varPhi_{i,v}$. For each $v\in S$, the main result in \cite{CF24} shows  how to define
\[\beta(\varPhi_v,\mu_v;\varPhi_{1,v},\varPhi_{2,v}):=\frac{\alpha_v(\varPhi_v,\pi(J)\varPhi_v\otimes\omega_v^{-1}|\cdot|_v^k;\mu_v)}{(\varPhi_{1,v},\varPhi_{2,v})}\]  as a meromorphic function on $\CW^{(n)}(\Gamma)_\CU$ for some $n$.  One can define \[\CL_{p,\Pi}:=\frac{\CP_F^2}{\prod_{v\in S}\beta(\varPhi_v, \mu_v;\varPhi_{1,v},\varPhi_{2,v})}\]
 \end{remark}

\section{$p$-adic Waldspurger formulae}\label{Explicit reciprocity law}
Let $\sigma$ be a cuspidal automorphic $\GL_2$-representation with central character $\omega|\cdot|^{-k}$ and lowest weight $2\BN\ni k=r+2\geq2$.  Let $\chi\in\CW(k)$ be of infinity type $(1+j,k-1-j)$, $0\leq j \leq r$ such that $\epsilon(1/2,\sigma\times\chi)=-1$. Let $B$ be the indefinite quaternion algebra  
such that  for each finite place $v$, $\epsilon(B_v)\chi_v(-1)\chi_{\CK}(-1)=\epsilon(1/2,\sigma_{v}\times\chi_{v})$.  Let $\pi=\otimes\pi_v$ be the automorphic $B^\times$-representation whose Jacquet-Langlands transfer is $\sigma$. In this section, assume that  $\sigma_p$ is a {\bf principal series}. This implies that $B_p$ is split and $\sigma_p=\pi_p$.    Fix the embedding $\CK\to B$  and take $g_{\ell,n}\in\GL_2(\BQ_p)$ as in Subsection \ref{NC}.

Previously, we constructed the anticyclotomic $p$-adic $L$-function $\CP_\varphi$ and established its interpolation formula at algebraic character $\mu\in \Sigma^+(k)$. In this section, we shall relate
 $\CP_\varphi(\chi)$  with  the generalized Heegner cycle attached to $(\varphi,\chi)${\footnote{It is highly desirable to establish a family version which states that the big logarithm of the big Heegner class (see Appendix \ref{Heegner cycles in family}) equals the anticyclotomic $p$-adic $L$-function. We shall report this result when $p$ splits in $\CK$ elsewhere. In the case $p$ is non-split, the big $p$-adic logarithm is not established yet.}}. Such formulae are widely studied under different ramification restrictions  (see e.g., \cite{BDP, Bro14,LZZ18, Cas18,HW24, Man24, AI}). Since the semistable models of the relevant Kuga-Sato varieties are not known yet (perhaps even not expected), we shall follow the strategy in \cite{HW24} which works directly with local systems on relevant Shimura curves. Since these curves only have semistable model in general, the  recently developed syntomic formalism with coefficients in \cite{ABSV} is essential\footnote{When $k=2$, one can also use the Vologodsky integration theory in \cite{BZ23}}. 
\subsection{Geometry of Shimura curves and Coleman primitives}
We start with  some facts concerning the rigid geometry of $X_1(m)$. When $B=M_2(\BQ)$, details can be found in \cite[Section 4.4]{BE10}. For general cases, details/ingredients can be found in \cite{Buz97,Car86} and \cite{KM85}. 
 
 When $m\geq1$, the curve $X_1(m)$ has a regular flat model $\CX_1(m)$ over $\BZ_p[\zeta_{p^m}]$. We can and will assume the tame level is small enough, so the irreducible components of the special fiber of $\CX_1(m)$ are all smooth curves of genus at least two. These irreducible components meet at the supersingular points and exactly two among them are isomorphic to the Igusa curve $\IG(m)$.  We denote these two components by $\IG_\infty(m)$ and $\IG_0(m)$. Enlarging $L$ if necessary, $X_1(m)_L$ acquires a semistable model $\pi:\ \CX_{1,m}\to \CX_{1}(m)\times\CO_L$. This semistable model $\CX_{1,m}$ can be obtained by 
 applying sufficiently many blowing-ups and the bi-rational map $\pi$ identifies two components of the closed special fiber $\bar{\CX}_{1,m}$ of $\CX_{1,m}$ with $\IG_\infty(m)\times k_L$ and $\IG_0(m)\times k_L$. 
 
 Let $\red$ be  the mod $p$ reduction map $X_1(m)_L\to \bar{\CX}_{1,m}$. The inverse image of the irreducible components of $\bar{\CX}_{1,m}$ are wide opens and form an admissible cover of  the rigid analytic variety $X_1(m)_L$. Denote by $\CW_\infty$   the admissible open corresponding to $\IG_\infty(m)\times k_L$ and  the inverse image of the smooth locus of $\IG_\infty(m)\times k_L$  by  $\CA_\infty$. $\CA_\infty$ is the underlying affinoid of $\CW_\infty$ and its $L$-points are  in bijection with quadruples $(A,i, \bar{\eta}, \eta_p)$
  where $(A,i)$ is an ordinary false elliptic curve, $\bar{\eta}$ is a $U^p$-orbit of away-from-$p$ level structure and an isomorphism $\eta_p:\ \mu_{p^m}\to e\CC(m)$ between $\mu_{p^m}$ and the canonical subgroup of level $m$. Quotient by the canonical subgroup (of level $1$) gives a characteristic zero lift $\Phi_{\Fr}$ of the Frobenius map in characteristic $p$ to a system of wide open neighborhood of $\CA_\infty$ in $\CW_\infty$. In terms of the Hecke translation, $\Phi_{\Fr}=p\langle p\rangle V_p$
  where $\langle p\rangle$ is the diamond operator.
 

Set  $\BH_r=\Sym^r \BH$ and consider the de Rham cohomology of the complex 
\[0\to \BH_r\xrightarrow{\nabla} \nabla(\BH_r)+\BH_r\otimes \Omega_{X_1(m)}^1\to0\]
In the case $B=M_2(\BQ)$, we shall consider its  parabolic de Rham cohomology.  

For any small enough open compact subgroup $U\subset \BB_f^\times$, and any form $\varphi=\otimes_v\varphi_v\in\pi^U$ such that $\varphi_\infty$ is holomorphic,  denote the differential class in $H^0(X_U, \uomega^r\otimes\Omega^1_{X_U})\subset H^1_{\dR}(X_U,\BH_r)$ attached to $\varphi\otimes|\cdot|$ by $\omega_\varphi$.
When $\varphi_p$ is the new vector,  we obtain $\omega_\varphi\in H^0(X_1(n), \uomega^r\otimes\Omega^1_{X_1(n)})$ for $n\geq \Cond(\pi_p)$. By the relation between $V_p$ and $\Phi_{\Fr}$, the action of $p$-depletion operator $\id-V_p U_p$ on  $\omega_\varphi|_{\CW_\infty}$ is given by $P(\Phi)$, where  $P$ is a polynomial  of degree two when $\pi_p$ is unramified (\cite[Proposition 5.8]{Bro14}) and  degree one when $\pi_p$ is ramified (\cite[Theorem 2.4]{Cas18}). According to the Weil conjecture, $P(T)$ is a Coleman polynomial in the sense of \cite[Definition 5.6]{Bro14}. Now by the theory of Coleman primitives \cite[Theorem 10.1]{Col94},
\begin{lem}\label{Col}When $\varphi_p$ is the new vector,  there exists a locally analytic section $F_{\varphi}$ of $\BH_r$ on $\CW_\infty$ such that $\nabla F_\varphi=[\omega_\varphi]$ and $P(\Phi_{\Fr})F_\varphi$ is rigid analytic on some strict neighborhood $\CA_\infty\subset \CW_\infty^\prime\subset \CW_\infty$. Such section is unique when $r>0$ and unique up to constant when $r=0$. 
\end{lem} 
Now let  $\varphi_p^\prime$ be the $p$-depleted new-vector. Set $\varphi^\prime=\varphi_p^\prime\otimes \otimes_{v\neq p}\varphi_v$ and denote $P(\Phi_{\Fr})F_\varphi$ by $F_{\varphi^\prime}$. When $r=0$, such $F_\varphi^\prime$ is unique only up to constant and we make the following specific choice. Let  $\iota_X:\ X_1(n)\to J_1(n)$ be the quasi-embedding from the Shimura curve $X_1(n)$ to its Jacobian  $J_1(n)$ induced by the Hodge class (\cite[Section 3]{YZZ}). The invariant differential form $\omega_{\varphi^\prime}$ can be seen as an invariant differential form on $J_1(n)$ via $\iota_X$. Let $\log_p$ be the $p$-adic logarithm from $J_1(n)$ to its Lie algebra and set
\[\log_{\varphi^\prime}:\ X_1(n)\to \BC_p,\quad x\mapsto \langle \log_p\iota_X(x),\omega_{\varphi^\prime}\rangle.\]
By \cite{BZ23}, $\log_{\varphi^\prime}$ is the Coleman primitive of $[\omega_{\varphi^\prime}]$.

In the following, take $\varphi=\otimes_v\varphi_v\in \pi$  and $\ell\gg 2n\geq \max\{4,2\Cond \pi_p\}$ such that
\begin{itemize}
    \item $\varphi_\infty$ is holomorphic of weight $k$,
    \item   $\varphi_p=\Vol(1+p^n\BZ_p)^{-1}\Char_{1+p^n\BZ_p}$.
\end{itemize}
Since $\varphi_p$ is the linear combination of $\varphi_p^\prime$ twisted by characters of order $\leq n$, we can obtain a rigid analytic section $F_\varphi$ on $\CX_0(2n,n;\tilde{r})_{c}^{\geq n}$ for $\tilde{r}\gg0$ from $F_{\varphi^\prime}$ by pull-back and twisting by finite order characters.

\subsection{Generalized Heegner class}
For any $t\in \BA_{\CK}^{\infty,\times}$, let $A_t$ be the (false) elliptic curve underlying $P_t:=[tg_{\ell,n}]$ when $p$ is non-split in $\CK$ and $P_t:=[t]$ when $p$ is split in $\CK$. Let  $\Gamma_t$ be the graph of the isogeny $f_t:\ A:=A_1\to A_t$ given by the CM theory. Consider the eigen-decomposition of $\BH_{A_t}$ with respect to the $\CK$-action and take respectively a basis $\{\omega_{A_t}\in\uomega_{A_t}, \eta_{A_t}\}$ such that $\langle \omega_{A_t},\eta_{A_t}\rangle=1$ under the \Poincare pairing and $f_t^*(\omega_{A_t})=\omega_A$. Note that $f_t^*(\eta_{A_t})=|t|_{\CK}^{-1}\eta_A$.

Let $W_r$ be the canonical desingularization of the $r/2$-fold (resp. $r$-fold) fiber product of the universal false  elliptic curve (resp. elliptic curve) over $X_0(2n,n)$ with itself and $C_r=W_r\times A^{r/2}$ (resp. $W_r\times A^r$) when $B\neq M_2(\BQ)$ (resp. $B=M_2(\BQ)$). By \cite[Section 6]{Bro14} and \cite[Section 2]{BDP}, there exists a projector $\epsilon_W\in \mathrm{Corr}(W_r,W_r)$ such that $\epsilon_WH^*_{\dR}(W_r)=H^1_{\dR}(X_0(2n,n), \BH_r)$. The specialization of $\epsilon_W$ at $P_1$ gives a projector $\epsilon_A\in\mathrm{Corr}(A^{r/2},A^{r/2})$ (resp. $\epsilon_A\in\mathrm{Corr}(A^{r},A^{r})$) such that $\epsilon_AH^*(A^{r/2})=\Sym^{r}\BH_A$ (resp. $\epsilon_AH^*(A^{r})=\Sym^{r}\BH_A$). Let  $\epsilon=\epsilon_W\epsilon_A$. The generalized Heegner cycle $\Delta_t:=\epsilon\Gamma_t^{r/2}$ (resp. $\epsilon\Gamma_t^{r}$) in $C_r$ is fibered over $P_t$ and has codimension $r+1$. When $r>0$, denote \[\frac{1}{\sharp \BA^{\infty,\times}\CK^\times\bs \BA_{\CK}^{\infty,\times}/V}\sum_{t\in\BA^{\infty,\times}\CK^\times\bs \BA_{\CK}^{\infty,\times}/V}\chi(t)|t|_\CK^{-1}\Delta_t\]
by $z^\chi$ and when $r=0$, denote its modification by the Hodge class (as in  \cite[Section 6]{Dis22}) by $z^\chi$. 

 We shall consider the image of $z^\chi$ in $H^{2r+2}_{et}
(C_r,L)$ under the \'etale cycle class map.
 When  $B=M_2(\BQ)$ (resp. $B\neq M_2(\BQ)$), let $V$ be the (resp. $e$-component of) the relative \'etale cohomology of the universal (resp. false) elliptic curve on $X_0(2n,n)$ and $V_A$ be the (resp. $e$-component of) the \'etale cohomology of $A$, all with $L$-coefficients. By Lieberman's trick, 
\[ \epsilon H^{2r+2}_{et}(C_r, L)=H^2_{et}(X_0(2n,n), \TSym^r(V))\otimes \TSym^r(V_A)\]
Since $V^\vee:=\Hom_L(V,L)\cong V(1)$, one has $\cl(\Delta_t)\in H^2_{et}(X_0(2n,n), \TSym^r(V^\vee)(1)\otimes \TSym^r(V_A))$ and thus $\cl(z^\chi)\in H^2_{et}(X_0(2n,n), \TSym^r(V^\vee)(1)\otimes \chi_{-1})$, which is cohomologically trivial  by \cite[Lemma 6.2.1]{Dis22}. 
Denote  its image in $H^1(\CK, H^1_{et}(X_0(2n,n)_{\bar{\BQ}}, \TSym^r(V^\vee)(1)\otimes \chi_{-1}))$ again by $z^\chi$.

Denote $\pi\otimes|\cdot|$ by $\pi_1$. The irreducible smooth  $\BB_f^\times$-representation $\pi_1^\infty$ admits a model over $M_\pi$, the Hecke field of $\pi$. Base change this model to $L$ and by abuse of notations, denote the outcome also by $\pi_1^\infty$. Let $V_\pi$ be the $2$-dimensional $G_{\BQ}$-representation such that $L(s-1/2,V_\pi)=L(s+1,\pi)$.   By \cite[Proposition 2.5.2]{Dis22}, for all small enough open compact subgroup $U\subset \BB_f^\times$,
\[\Hom_{G_{\BQ}}(H^1_{et}(X_{U,\bar{\BQ}}, \TSym^r(V)(1)), V_\pi)\cong  \pi_1^{\infty,U}.\]
When $r=0$, let $A_\pi$ be the abelian variety with Tate module $V_\pi$. Then $\pi_1^{\infty,U}=\Hom^\circ(J_U,A_\pi)$. Fix a generator $\omega_\pi\in \Fil^0 D_{\dR}(V_\pi)$. For $\varphi\in\pi^U$, one can decompose $\varphi=\varphi_f\otimes\varphi_\infty$ such that 
 \[(\varphi\otimes|\cdot|)_f^*(\omega_\pi)=\omega_\varphi
\in H^0(X_U,\Omega\otimes\uomega^r)\subset H_{\dR}^1(X_U, \BH_r)\]
 Then by \cite{NN16}, one has 
\[z^{\varphi,\chi}:=(\varphi\otimes|\cdot|)_f(z^\chi)\in H_f^1(\CK^S, V_{\pi}\otimes\chi_{-1}).\]
Here $S$ is a finite set of places containing $p$ and all ramified places of the Galois representation; the superscript $S$ means cohomology of the Galois group $\mathrm{Gal}(\mathcal{K}^{S}/\mathcal{K})$ of the maximal algebraic extension $\CK^S$ of $\mathcal{K}$ unramified outside $S$ and the subscript $f$ means the Bloch--Kato Selmer group.
\subsection{Explicit reciprocity law}
 For any $G_{\CK}$-representation $\rho$ (with $L$-coefficients), let \[D_{\CK_p,\dR}(\rho)= \left(\otimes_{\fp\mid p}(\rho|_{G_{\CK_\fp}}\otimes B_{\dR})^{G_{\CK_\fp}}\right)= D_{\CK_p,\iota,\dR}\oplus D_{\CK_p,\iota^c,\dR},\quad 
D_{\CK_p,\iota,\dR}=D_{\CK_p,\dR}(\rho)\otimes_{\CK_p\otimes L,\iota}L\]
Here  $B_{\dR}$ is Fontaine's de Rham period ring and the filtration $\Fil^\bullet$ on $B_{\dR}$ induces a filtration on $D_{\CK_p,\dR}(\rho)$.  For more details on the de Rham functor, we refer to Subsection \ref{sc:Hodge} below. Since $V_\pi\otimes \chi_{-1}$ is conjugate self-dual,
we obtain an element $\omega_{\pi,\chi}$ in
$\Fil^0D_{\CK_p,\iota,\dR}(V_\pi^\vee(1)\otimes\chi^{-1}_1)$ from $(j!)^{-1}\omega_\pi\otimes \omega_A^j\eta_A^{r-j}$ by  conjugation.  Denote by $\log_p$ the inverse of the isomorphism 
\[D_{\CK_p,\dR}(V_\pi\otimes \chi_{-1})/\Fil^0D_{\CK_p,\dR}(V_\pi\otimes\chi_{-1})\cong H^1_f(\CK_p, V_\pi\otimes \chi_{-1}).\]
Note that \[\Fil^0D_{\CK_p,\dR}(V_\pi^\vee(1)\otimes\chi^{-1}_1)\cong \left(D_{\CK_p,\dR}(V_\pi\otimes \chi_{-1})/\Fil^0D_{\CK_p,\dR}(V_\pi\otimes\chi_{-1})\right)^\vee.\]
\begin{thm}\label{ERC1}For $\chi\in\CW(\Gamma)$ algebraic of infinite type $(1+j,r+1-j)$, one has 
\[\CP_\varphi(\chi)/\Omega_p^{r-2j}=\langle\log_p z^{\varphi,\chi}, \omega_{\pi,\chi}\rangle\]
\end{thm}
\begin{remark}By the definition of $\omega_{\pi,\chi}$, Theorem \ref{ERC1} only deals with the $\iota$-component $\log_p z^{\varphi,\chi}$. To deal with the $\iota^c$-component, one can consider  the new Rankin pair  obtained from $(\sigma,\chi/\CK)$ by complex conjugation and run the whole machinery again. The upshot is the explicit reciprocity law for the new Rankin pair computes the $\iota^c$-component of $\log_pz^{\varphi,\chi}$.  
\end{remark}
Since $H_{\dR}^1(X_U,\BH_r)=D_{\BQ_p,\dR}(H^1_{et}(X_U, \TSym^r V))$ and \[D_{\BQ_p,\dR}(H^1_{et}(X_U, \TSym^r V))\otimes \CK_p=D_{\CK_p,\dR}(H^1_{et}(X_U, \TSym^r V)),\] the above theorem is equivalent to the following theorem by the adjunction formula and the definition of Abel-Jacobi map.
\begin{thm}\label{ERC2}For $\chi\in\CW(\Gamma)$ algebraic of infinite type $(1+j,r+1-j)$
\[\CP_\varphi(\chi)=(j!)^{-1}\Omega_p^{r-2j}\langle AJ_p(z^\chi), \omega_\varphi\wedge\omega_A^j\wedge\eta_A^{r-j}\rangle\]
\end{thm}
 Now we apply the syntomic formalism in \cite{ABSV}\footnote{The case $p=2$ is not excluded as explained by authors in a private communication. The only new ingredient in the $p=2$ case is to obtain the crystalline site  by asking the nilpotence of the divided power of the ideal instead of  just the power of the ideal.} to establish Theorem \ref{ERC2}. We start with the following  crucial proposition{\footnote{When $\Cond\pi_p=0$, we can also deduce it  from   \cite[Lemma 7.2]{AI} and \cite[Proposition 7.1]{Bro14}, \cite[Proposition 3.18]{BDP}.}}
\begin{prop}\label{Syn} For any  $\alpha\in\Sym^r\BH_A$, \[\mathrm{AJ}_p(\Delta_t)(\omega_\varphi\otimes\alpha)=\langle f_t^*F_\varphi(P_t),\alpha\rangle_A \]
Here $\langle-,-\rangle_A$ is the \Poincare pairing on $A$.
\end{prop}
\begin{proof}Since both $\BH$ and $V$ are already defined at the maximal level, $\BH$ and $V$ (resp. $\BH_r$ and $\TSym^rV$) are crystalline attached in the sense of \cite{ABSV}. Moreover the assumption $\pi_p$ is principal series guarantees that the filtered $(\Phi,N)$-module attached to $V_\pi$ is potentially crystalline. Thus we can use the theory in \cite{ABSV} for the bundle $\BH_r$ and $\BH_r\otimes \Sym^r\BH_A$. 

Consider the embedding from CM points to $X_U$. By \cite[Proposition 3.8]{Wal25} (see also the Appendix \ref{Heegner cycles in family}), $z^\chi$ coincides with the image of certain basic vector under the Gysin map.
Thus we can use the  projection formula \cite[Proposition 16.8]{ABSV}  
and  thus the argument in \cite[Theorem 6.4.1]{HW24} still works. Let $s_\omega$ be a local lift of $\omega_\varphi$ at $P_t$ and let $\cl(\Delta_t)$ be the class of $\Delta_t$ in $\Sym^r H_{A_t}\otimes \Sym^r H_A$
\[\mathrm{AJ}_p(\Delta_t)(\omega_\varphi\otimes\alpha)=\langle s_\omega\otimes\alpha, \cl(\Delta_t)\rangle\]
Via the explicit description of Coleman primitive (with the $q$-expansion in the first several lines in \cite[P. 39]{Man24} replaced by the Serre-Tate expansion), the argument in \cite[Lemma 6.2]{HW24} shows that $s_\omega=F_\varphi(P_t)$. As  in \cite[Proposition 3.21]{BDP}, we can deduce the equality
\[\langle F_\varphi(P_t)\otimes\alpha, \cl(\Delta_t)\rangle
=\langle f_t^*F_\varphi(P_t),\alpha\rangle_A\]
\end{proof}
\begin{proof}[Proof of Theorem \ref{ERC2}]By construction and Proposition \ref{Syn}, \[\mathrm{AJ}_p(\Delta_t)(\omega_\varphi\otimes\alpha)=\langle f_t^*F_\varphi(P_t),\alpha\rangle_A\quad \forall \ \alpha\in\Sym^r\BH_A.\]
Take a residue annulus $\CV$ containing  $P_t$ in the strict neighborhood on which $F_\varphi$ is rigid analytic.
As explained in \cite[Proposition 7.8]{AI}, the de Rham sheaf $\uomega|_\CV\subset \BH_\CV$ admits a basis $\{\omega_t^\prime\in\uomega,\eta_t^\prime\}$ such that 
\begin{itemize}
\item $\omega^\prime_t$ specializes to $\omega_{A_t}$ and $\eta_t^\prime$ specializes to $\eta_{A_t}$,
\item $\nabla\omega^\prime=\eta^\prime\otimes db$ for some function $b$ on $\CV$ and $\nabla\eta^\prime=0$.
\end{itemize}
Write $F_\varphi|_{\CV}=\sum_{j=0}^r(-1)^jg_j(\omega^\prime)^{r-j}(\eta^\prime)^j$. Then as \cite[Lemma 3.22]{BDP},
\[\mathrm{AJ}_p(\Delta_t)(\omega_\varphi\otimes\omega_A^{r-j}\eta_A^j)=|t|_{\CK}^{-j}g_j(P_t).\]

Let $F_{\varphi,r}$ be $(-1)^r$ times the image of $F_\varphi$ along the quotient map $\BH_r\to\uomega^{-r}$. Write $\nabla^{r-j}F_{\varphi,r}|_{\CV}=\sum_{k=0}^{r-j}h_k (\omega^\prime)^{r-2j-k}(\eta^\prime)^k$.
Then by the argument in \cite[Proposition 7.8]{AI}, $h_0=\partial^{r-j}g_r=\frac{r!}{j!}g_j$. By construction,  $\nabla^{r-j}F_{\varphi,r}(P_t)=h_0(P_t)$. On the other hand, by the proof of \cite[Proposition 7.6(ii)]{AI} together with the Serre-Tate expansion result in \cite{Fan},
\[\nabla^{r-j}F_{\varphi,r}=r!\nabla^{-1-j}\varphi\quad  \forall\ 0\leq j\leq r\]
as a rigid analytic section of $\BW_{r-2j}$ on some strict neighborhood. Now the stated results follow from the definition of $\CP_\varphi$.
\end{proof}

For completeness, let us record the Kronecker limit formula when $k=2$. Choose $\varphi=\otimes\varphi_v\in I_{P(\BA)}^{\GL_2(\BA)}(\omega|\cdot|^{-1})$,
such that $\varphi_\infty$ is holomorphic of weight $2$ and $\varphi_p=\Vol(1+p^n\BZ_p)^{-1}\Char_{1+p^n\BZ_p}$ with $n\gg0$. Let $u_\varphi\in H^1(Y, \CO_Y^\times)$ be the modular unit such that $d\log u_\varphi=E_\varphi\otimes|\cdot|$. Take an algebraic Hecke character  $\chi\in \Sigma^+(2)$ of infinite type $(-1,-1)$.   Now we can state the following generalization of \cite[Proposition 7.9]{AI}:
\begin{thm}\label{weight two Eisenstein}With notations as above, one has \[\CP_\varphi(\chi)=\frac{1}{\sharp \BA^{\infty,\times}\CK^\times\bs \BA_{\CK}^{\infty,\times}/V}\sum_{t\in\BA^{\infty,\times}\CK^\times\bs \BA_{\CK}^{\infty,\times}/V}\chi(t)|t|_{\CK}^{-1}\log_p(u_\varphi(P_t))\]
\end{thm}
\begin{proof}The discussion on $p$-adic logarithm  in \cite[Section 7.2.1]{AI} applies in our setting. Thus the argument in \cite[Proposition 7.9]{AI} carries over verbatim.
\end{proof}

\section{Local aspect of the $\pm$-Iwasawa theory}
In the remainder of the article, we develop a new $\pm$-Iwasawa theory and combine it with the $p$-adic Waldspurger formula to establish the $p$-converse theorem for self-dual Hecke characters. In this section, we focus on the local aspects of this $\pm$-theory. From now on, we assume $p$ is non-split in $\mathcal{K}$ except in Section \ref{split case}.

\subsection{Some $p$-adic Hodge Theory}\label{sc:Hodge}
Let $K$ be a finite extension of $\mathbb{Q}_p$ and let $V$ be a continuous representation of $G_K$ on a $\mathbb{Q}_p$-vector space of dimension $d$. Let $\mathbb{B}_{\mathrm{dR}}$ be the de Rham period ring of Fontaine and let $D_{K,\mathrm{dR}}(V):=(V\otimes B_{\mathrm{dR}})^{G_K}$ with a filtration $\mathrm{Fil}^\bullet$. We call $V$ a de Rham representation if $\mathrm{dim}_{K} D_{K,\mathrm{dR}}(V)=d$. Let $\mathbb{B}_{\mathrm{cris}}\subset \mathbb{B}_{\mathrm{dR}}$ be the crystalline period ring of Fontaine with the Frobenius action $\varphi$. Denote $D_{K, \mathrm{cris}}(V):=(V\otimes \mathbb{B}_{\mathrm{cris}})^{G_K}$. Let
$$\mathrm{exp}=\exp_{K,V}: \frac{D_{K,\mathrm{dR}}(V)}{\mathrm{Fil}^0D_{K,\mathrm{dR}}(V)+D_{K,\mathrm{cris}}(V)^{\varphi=1}}\hookrightarrow H^1(K, V)$$
be the Bloch--Kato exponential map, which is the boundary map in the cohomology exact sequence of
$$0\rightarrow V\rightarrow  V\otimes(\mathbb{B}^{\varphi=1}_{\mathrm{cris}}\oplus \mathbb{B}^+_{\mathrm{dR}})\rightarrow V\otimes\mathbb{B}_{\mathrm{dR}}\rightarrow 0$$
obtained from the fundamental exact sequence of $p$-adic Hodge theory
$$0\rightarrow \mathbb{Q}_p\rightarrow \mathbb{B}^{\varphi=1}_{\mathrm{cris}}\oplus\mathbb{B}^+_{\mathrm{dR}}\rightarrow \mathbb{B}_{\mathrm{dR}}\rightarrow 0.$$
We also denote the dual exponential map
$$\exp^*=\exp^*_{K,V}: H^1(K, V^*(1))\rightarrow \mathrm{Fil}^0 (D_{K,\mathrm{dR}}(V^*(1)))$$
as the dual of the exponential map for $V$ under the Tate local duality.
We define two spaces
$$H^1_e=\mathrm{ker}\{H^1(K, V)\rightarrow H^1(K, \mathbb{B}^{\varphi=1}_{\mathrm{cris}}\otimes V)\}$$
and
$$H^1_f=\mathrm{ker}\{H^1(K, V)\rightarrow H^1(K, \mathbb{B}_{\mathrm{cris}}\otimes V)\}.$$
In other words, the $H^1_e$ is the image of the exponential map.

In practice we often assume $V$ is an $L$-vector space and the Galois action is $L$-linear. Thus all these spaces above are also $L$-vector spaces.

We make the convention that the Hodge-Tate weight of the cyclotomic character is $+1$. The following lemma is useful.
\begin{lemma}
Suppose $V$ is a $2$-dimensional de Rham $L$-representation of $G_{\mathbb{Q}_p}$ and Hodge-Tate weight $(a_1,a_2)$ for $a_1\leq 0$ and $a_2> 0$ and such that $V^{G_K}=0$. Then $H^1_f(\mathbb{Q}_p, V)$ has dimension $1$. If in addition that $D_{\mathrm{cris}}(V)^{\varphi=1}=0$, then $H^1_e(\mathbb{Q}_p, V)$ also has dimension $1$.
\end{lemma}
\begin{proof}
This is clear from the dimension formula, say in \cite[Proposition 2.8, 2.21]{Bellaiche}.
\end{proof}
\begin{remark}
By identifying $H^1(\mathcal{K}_p,V)$ with $H^1(\mathbb{Q}_p, \mathrm{Ind}^{\mathcal{K}_p}_{\mathbb{Q}_p}V)$ and $D_{\CK_p,\dR}(V)\cong D_{\BQ_p,\dR}( \mathrm{Ind}^{\mathcal{K}_p}_{\mathbb{Q}_p}V)$ using Shapiro lemma, we see that for the local $G_{\mathcal{K}_p}$-representation $V$ associated to an $L$-valued  Hecke character of $\mathbb{A}^\times_\mathcal{K}$ of Archimedean type $(a_1,a_2)$ with $a_1< 0$ and $a_2\geq 0$ (so Hodge-Tate weight is $(-a_1, -a_2)$) with $a_1+a_2=-1$,  $D_{\CK_p,\mathrm{cris}}(V)^{\varphi=1}=0$ by considering the motivic weight. Then the $\exp$ map (and thus also its inverse map $\log$) and the $\exp^*$ map are both isomorphisms between $1$-dimensional $L$-vector spaces.
\end{remark}

\subsection{Vanishing Orders of CM $L$-functions}
 We recall some results on the generic vanishing orders of CM $L$-functions, which will be used  repeatedly later. A Hecke character $\psi_0$ of $\mathcal{K}^\times\backslash\mathbb{A}^\times_\mathcal{K}$ is called {\bf self-dual} if it has Archimedean type $(-n-1,n)$ for some natural number $n$ and its restriction to $\mathbb{A}^\times_\mathbb{Q}$ is $\chi_{\CK}|\cdot|^{-1}$ where $\chi_{\CK}$ is the quadratic character for $\mathcal{K}/\mathbb{Q}$. The following proposition is proved by Greenberg \cite{Greenberg1}.
\begin{proposition}\label{Greenbergnonvanishing}
Let $M$ be a positive integer. Then for all but finitely many self-dual characters $\psi_0$ whose conductor has norm bounded by  $M$ (but allowing general  $n$), we have either $L(\psi_0,1)\not=0$ or the global root number for $\psi_0$ is $-1$.
\end{proposition}
The next proposition is proved by Jia in \cite{Jia} generalizing an earlier result of Rohrlich by removing the class number $1$ assumption.
\begin{proposition}\label{Jianonvanishing}
Let $P$ be a finite set of rational primes. Then for all but finitely many self-dual characters $\psi_0$ of Archimedean type $(-1,0)$ whose conductors are supported in $P$, we have
\begin{itemize}
\item If the global root number for $\psi_0$ is $+1$, then $L(\psi_0,1)\not=0$.
\item If the global root number for $\psi_0$ is $-1$,  the vanishing order of $L(\psi_0,s)$ at $s=1$ is $1$.
\end{itemize}
\end{proposition}

\subsection{$\pm$-theory}
Recall that $\Gamma_\CK$ and $\Gamma_\CK^-$ is the Galois group of the $\BZ_p^2$ and anticyclotomic $\BZ_p$ extension of $\CK$ respectively. Let $\Psi_\mathcal{K}$ be the natural character $G_\mathcal{K}\rightarrow \Gamma_\mathcal{K}\hookrightarrow\CO_L[[\Gamma_\mathcal{K}]]$
and $\Psi_\mathcal{K}^-$ be the character $G_\mathcal{K}\rightarrow \Gamma^-_\CK\hookrightarrow\mathcal{O}_L[[\Gamma^-_\CK]]$.
For a Hecke character $\chi$ of $\mathcal{K}^\times\backslash\mathbb{A}^\times_\mathcal{K}$, we write $\boldsymbol{\chi}$ to be $\chi\Psi_\CK^-$.  We often write $\phi_0$ for the origin point of $\mathrm{Spec}\Lambda^-$. 

Consider $\Lambda^{-,\prime}=\mathcal{O}_L[[p^{-m}X]]$ and   $\Lambda^{-,\prime}_L=\mathcal{O}_L[[p^{-m}X]]\otimes L$ for some $m\gg0$. One way to think of $\Lambda^{-,\prime}$ is to compare it with Tate algebras: we have $L\langle p^{-m}X\rangle \subset \mathcal{O}_L[[p^{-m}X]]\otimes L\subset L\langle p^{-m-1}X\rangle$.
A $\BC_p$-point $\phi$ of $\mathrm{Spec}\mathcal{O}_L[[p^{-m}X]]$ sends $X$ to $\phi(X)$ with $|\phi(X)|_p<p^{-m}$.
By abuse of notations, we also write $\Psi_\CK^-$ $\boldsymbol{\chi}$ for the corresponding characters with coefficient ring changed to  $\Lambda^{-,\prime}$ or $\Lambda^{-,\prime}_L$. We will indicate the precise ring when necessary.

A $\BC_p$-point $\phi\in\mathrm{Spec}\mathcal{O}_L[[p^{-m}X]]$ is called {\bf  an arithmetic point} if there is an even integer $\kappa$ such that the specialization of $\Psi^-_\mathcal{K}$ to $\phi$ is the $p$-avatar of a Hecke character of $\mathcal{K}^\times\backslash \mathbb{A}^\times_{\mathcal{K}}$ of Archimedean type $(-\kappa, \kappa)$. Sometimes we write $\kappa_\phi$ for the $\kappa$ corresponding to this $\phi$.

\begin{definition}
Let $\mathfrak{X}_1$ be the set of arithmetic points $\phi$  such that the corresponding  $\kappa_\phi$ is non-negative and  divisible by $2(p^2-1)$. Let $\mathfrak{X}_2$ be the set of arithmetic points $\phi$  such that the corresponding $\kappa_\phi\leq -2$ and   divisible by $2(p^2-1)$.
\end{definition}
The reason to impose the condition that $\kappa_\phi$ is divisible by $2(p^2-1)$ is due to Lemma \ref{punchged}. It is the analogue of the phenomenon that powers of the cyclotomic character live in the same component in the $p$-adic family only when the weights are congruent modulo $(p-1)$.

We shall also need to consider the $2$-variable weight space $\mathcal{O}_L[[p^{-m}X,p^{-m'}T]]\otimes L$ where $\mathcal{O}_L[[p^{-m'}T]]$ is the weight space parameterizing the Coleman family $\mathcal{F}$ (in the case we consider the slope of $f$ is $\frac{1}{2}$, the weight is $2$, and the corresponding automorphic representation is either ramified or has distinct Satake parameters at $p$, thus the corresponding point on the eigencurve is \'etale over the generic fiber of the weight space). An arithmetic point $\phi$ of $\mathrm{Spec}\mathcal{O}_L[[p^{-m}X,p^{-m'}T]]\otimes L$ sends $(1+X)$ to $(1+p)^{\kappa_\phi}$ and sends $(1+T)$ to $(1+p)^{k_\phi-2}$ where $\kappa_\phi$ and $k_\phi\geq 2$ are integers. We define $2$ sets of arithmetic point as follows. To keep the notations same as \cite[Theorem 5.13]{BDP} and \cite[Theorem 7.1]{AI} we let $j_\phi=\kappa_\phi+\frac{k_\phi-2}{2}$.
\begin{definition}
We define the set $\mathfrak{X}_3$ to be the arithmetic points $\phi$ such that $-\frac{k_\phi-2}{2}\leq \kappa_\phi\leq \frac{k_\phi-2}{2}$ and  $\kappa_\phi$ is divisible by $2(p^2-1)$
. Let $\mathfrak{X}_4$ be the arithmetic points $\phi$ such that $\kappa_\phi<-\frac{k_\phi-2}{2}$ and $\kappa_\phi$ is divisible by $2(p^2-1)$. Let $\mathfrak{X}_5$ be the arithmetic points $\phi$ such that $\kappa_\phi>\frac{k_\phi-2}{2}$ and $\kappa_\phi$ is divisible by $2(p^2-1)$. By abuse of notations, we also use $\mathfrak{X}_3$, $\mathfrak{X}_4$ and $\mathfrak{X}_5$ for the corresponding sets of arithmetic points for a fixed $f$ (compared to a Coleman family $\mathcal{F}$).
\end{definition}
Let $\psi$ be a self-dual Hecke character on $\CK^\times\bs\BA_\CK^\times$ with infinite type $(-1,0)$.
\begin{lemma}\label{punchged}
Suppose $m>1$ and $\kappa_\phi$ is divisible by $2(p^2-1)$. At the prime $p$, the $p$-local component of the Hecke character corresponding to $\boldsymbol{\psi}_\phi$ is the same as the $p$-local component of $\psi$ (recall we assumed $p$ is non-split in $\mathcal{K}$).
\end{lemma}
\begin{proof}
If it were not the case, then it is easy to show by considering the image of $\mathcal{K}^\times_p\subset \mathbb{A}^\times_\CK$ under the reciprocity map and the formula for the $p$-adic avatar of a Hecke character, that the image of the specialization of $\Psi^-_\mathcal{K}$ to $\phi$ must contain a primitive $p$-th root of unity or $-1$ times an element $y\in\mathcal{O}^\times_{\mathcal{K}_p}$ with image $1$ in its residue field. Here we need the assumption on $\kappa_\phi$ to ensure that $\kappa_\phi$ kills the multiplicative group of the residue field of $\mathcal{O}_{\mathcal{K}_p}$, and that $\sqrt{-D}/\sqrt{-D}^c=-1$. Here $-D$ is the fundamental discriminant of $\CK$. This contradicts that this character takes values in a small neighborhood of $1$ since $m>1$.
\end{proof}

\begin{lemma}\label{root number}
Suppose $m>1$. For each $\phi\in\mathfrak{X}_1$ the character $\boldsymbol{\psi}_\phi$ has same root number as $\psi$; for each $\phi\in\mathfrak{X}_2$ the root number of the character $\boldsymbol{\psi}_\phi$ is $-1$ times the root number of $\psi$.
\end{lemma}
\begin{proof}
We consider the local root numbers of the specializations. The Archimedean root numbers for Hecke characters $\boldsymbol{\psi}_\phi$ with $\phi$ in $\mathfrak{X}_1$ are the same as that for $\psi$, while the Archimedean root numbers for Hecke characters $\boldsymbol{\psi}_\phi$ with $\phi$ in $\mathfrak{X}_2$ become that for $\psi$ multiplied by $-1$ (say using the formula in \cite[Lecture 2, 5.1]{Rohrlich1}, which implies the local root number is $i^{-|a-b|}$ if the Archimedean type is $(a,b)$). Note that the specialization of $\Psi^-_\mathcal{K}$ at $\phi$ is unramified at all primes $v\nmid p\infty$.  If $v$ is non-split in $\mathcal{K}$, then the image of the uniformizer $\varpi_v$ in $\Gamma^-_\CK$ under the reciprocity map is the identity, since its square is $1$ and $\Gamma^-_\CK$ is torsion-free. If $v=w\bar{w}$ is split in $\mathcal{K}$, then the local root number for $\boldsymbol{\psi}_\phi$ at $v$ is $\boldsymbol{\psi}_\phi((-1)_w)$ where $(-1)_w$ is the \idele element at $w$ taking value $-1$. Again, the image of $(-1)_w$ in $\Gamma^-_\CK$ under the reciprocity map is the identity, since the square of $(-1)_w$ is $1$ and $\Gamma^-_\CK$ is torsion-free. Together with Lemma \ref{punchged}, we see the conclusion of the lemma by taking the product of the root number over all places.
\end{proof}

\subsection{Explicit Reciprocity Law for Elliptic Units}
We recall some results about elliptic units following Kato in \cite[Chapter 15]{Kato}. We take an elliptic curve $E$ with CM by $\mathcal{O}_\mathcal{K}$. Let  $\psi_E$ be the associated Hecke character, which has  infinity type $(-1,0)$. Let $\mathfrak{f}$ be an ideal of $\mathcal{O}_\mathcal{K}$ contained in the conductor of $\psi_E$. Let $\psi$ be a Hecke character of $\mathcal{K}^\times$ of infinity type $(-r,0)$ with conductor dividing $\mathfrak{f}$. We fix a number field $M$ containing the values of $\psi$ and assume $L$  contains $M$. Following \emph{loc.cit.} let $V_M(\psi)=H^1(E(\mathbb{C}), \mathbb{Q})^{\otimes r}\otimes_\mathcal{K} M$. The \'etale realization  $V_L(\psi)$ of $\psi$ is $H^1_{\mathrm{et}}(E_{\bar{\mathbb{Q}}}, \mathbb{Q}_p)^{\otimes r}\otimes_\mathcal{K} L$ equipped with the Galois action given by $\psi$ (i.e., twisted by the character $\psi\cdot\psi_E^{-r}$ which is of finite order and conductor dividing $\mathfrak{f}$). We define the $1$-dimensional $M$-vector space $S(\psi)$ to be $\mathrm{coLie}(E)^{\otimes r}\otimes M$. We also define a map
$$\mathrm{per}_\psi: S(\psi)\rightarrow V_\mathbb{C}(\psi):=V_M(\psi)\otimes_M\mathbb{C}$$
as the map induced by the period map
$$\mathrm{coLie}(E)\rightarrow H^1(E(\mathbb{C}),\mathbb{C}).$$
Let $g_\psi$ be the CM form associated to $\psi$ as in \cite[15.10]{Kato}. Let $S(g_\psi)$ be the $M$-vector space spanned by the newform $g_\psi$. We define
$$V^\sim_L(\psi):=V_L(\psi)\oplus \tilde{c}\cdot V_L(\psi)$$
where $\tilde{c}\in\mathrm{Gal}(\bar{\mathbb{Q}}/\mathbb{Q})$ is a lift of $c\in\Gal(\CK/\BQ)$ and the action of $\mathrm{Gal}(\bar{\mathbb{Q}}/\mathbb{Q})$ on $\tilde{c}\cdot V_L(\psi)$ is the Galois action on $V_L(\psi)$ composed with the conjugation by $c$.

As in \cite[15.11]{Kato}, we fix an isomorphism of $1$-dimensional $M$-vector spaces
$$S(\psi)\simeq S(g_\psi),$$
which determines a unique isomorphism
$$V^{\sim}_L(\psi)\simeq V_L(g_\psi)$$
as in \cite[Lemma 5.11 (1)]{Kato} (the $V_L(g_\psi)$ is the Galois representation associated to $g_\psi$).

More generally, for Hecke characters $\psi$ of general Archimedean type $(-r_1,r_2)$ with $r_i\geq 0$, then $\psi\cdot |\cdot|^{-r_2}$ is of Archimedean type $(-r_1-r_2,0)$. We can define the $V(\psi)$ and $S(\psi)$ to be those of $\psi\cdot |\cdot|^{-r_2}$ as above, twisted by $(\zeta_{p^n})_n^{\otimes (-r_2)}$ (the $-r_2$-th power of the Tate motive).
Let $\mathcal{K}_{p^\infty\mathfrak{f}}$ be the ray class field of $\mathcal{K}$ of conductor $p^\infty\mathfrak{f}$. Let $S$ be a finite set of primes containing the primes dividing $p\mathfrak{f}$. We define for any $\mathbb{Z}_p$ representation $T$ of $G_\mathcal{K}$ the Iwasawa cohomology
$$H^1_{\mathrm{Iw}}(\mathcal{K}^S_{p^\infty\mathfrak{f}}, T):=\varprojlim_{\mathcal{K}'} H^1(\mathcal{K}^{\prime,S}, T)$$
for $\mathcal{K}'$ running through all finite sub-extensions of $\mathcal{K}_{p^\infty\mathfrak{f}}$. Here the superscript $S$ means cohomology of the Galois group $\mathrm{Gal}(\mathcal{K}^{\prime,S}/\mathcal{K}^\prime)$ where $\mathcal{K}^{\prime,S}$ is the maximal algebraic extension of $\mathcal{K}^\prime$ unramified outside $S$.

In \cite[Section 15.6]{Kato}, the family of elliptic units $z_{p^\infty\mathfrak{f}}$ is defined as an element in $H^1_{\mathrm{Iw}}(\mathcal{K}^S_{p^\infty\mathfrak{f}}, \mathbb{Z}_p(1))$. (In fact in the terminology of \cite[Section 5.1]{JK} one needs to multiply it by any element in the ideal $\mathcal{J}_\Lambda$ (in the notation of \emph{loc.cit.}) to make it in $H^1_{\mathrm{Iw}}(\mathcal{K}^S_{p^\infty\mathfrak{f}}, \mathbb{Z}_p(1))$, but this $\mathcal{J}_\Lambda$ is the unit ideal in our weight ring $\Lambda^{-}_L$). We have the following slight generalization of \cite[Proposition 15.9]{Kato}.

From now on we suppose the set of prime divisors of $\mathfrak{f}$ is contained in the union of $\{p\}$ and the set of prime divisors of the conductor of $\psi$. This will guarantee the involved $L$-function is primitive. 
\begin{proposition}
Let $r\geq 1$ and $\psi$ is a Hecke character of $\mathcal{K}$ of type $(-r_1,r_2)$ with $r_i\geq 0$. Let $\gamma\in V_M(\psi)$. Consider the map
$$H^1_{\mathrm{Iw}}(\mathcal{K}^S_{p^\infty\mathfrak{f}}, \mathbb{Z}_p(1))\rightarrow H^1_{\mathrm{Iw}}(\mathcal{K}^S_{p^\infty\mathfrak{f}},\mathbb{Z}_p(1))\otimes V_L(\psi)\rightarrow H^1_{\mathrm{Iw}}(\mathcal{K}^S_{p^\infty\mathfrak{f}}, V_L(\psi)(1))$$
where the first map is multiplying by $\gamma$ and the second is the natural identification map (note that $\psi$ factorizes through $\mathcal{K}_{p^\infty\mathfrak{f}}$).
Let $z_{p^\infty\mathfrak{f},\psi,\gamma}$ be the image of $z_{p^\infty\mathfrak{f}}$ under the above map.

Then for each character $\chi$ of $\mathrm{Gal}(\mathcal{K}'/\mathcal{K})$ where $\mathcal{K}'$ is a finite extension of $\mathcal{K}$ contained in $\mathcal{K}(p^\infty\mathfrak{f})$, the image of $z_{p^\infty\mathfrak{f},\psi,\gamma}\in H^1_{\mathrm{Iw}}(\mathcal{K}^S_{p^\infty\mathfrak{f}}, V_L(\psi)(1))$ in $H^1(\mathcal{K}', V_L(\psi)(1))$ has image under $\exp^*$ map   in $S(\psi)\otimes \mathcal{K}'$. Further composed with the map
$$\sum_{g\in\mathrm{Gal}(\mathcal{K}'/\mathcal{K})}\chi(g)\mathrm{per}_\psi\circ g: S(\psi)\otimes \mathcal{K}'\rightarrow V(\psi),$$
the image is $L_{p\mathfrak{f}}(\bar{\psi},\chi,r_1-r_2)\cdot\gamma$.
\end{proposition}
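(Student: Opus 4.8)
The plan is to reduce the statement to Kato's \cite[Proposition~15.9]{Kato}, which establishes the identity for $\psi$ of infinity type $(-r,0)$, and to transport it to general infinity type along the twisting operations introduced just before the statement. The key observation is that each of these operations is compatible with the dual exponential map, with the family of elliptic units $z_{p^\infty\mathfrak{f},\psi}$, and with the period map $\mathrm{per}_\psi$, so that the formula on the right-hand side survives the reduction.

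First I would recall the content of Kato's proposition in the base case. For $\psi$ of type $(-r,0)$ with $r\geq 1$, the image of $z_{p^\infty\mathfrak{f},\psi,\gamma}$ under $\exp^*$ is computed, via Kato's evaluation of the Coleman map attached to elliptic units---his form of the Wiles--Coleman explicit reciprocity law, \cite[15.6--15.9]{Kato}---in terms of Kronecker--Eisenstein series evaluated at the CM point attached to $E$; pairing this with $\sum_{\sigma}\chi(\sigma)\,\mathrm{per}_\psi\circ\sigma$ and applying Damerell's theorem identifies the outcome with $L(\bar\psi,\chi,r)\cdot\gamma$. This is exactly \cite[Proposition~15.9]{Kato}, which I would invoke directly.

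For a general Hecke character $\psi$ of type $(-r_1,r_2)$, set $\psi':=\psi\cdot|\cdot|^{r_1}$, of type $(0,r_1+r_2)$. By construction $V_L(\psi)$ and $S(\psi)$ are obtained from the objects attached to $\psi'$ by twisting by the $r_1$-th power of the Tate motive, and I would carry the base-case identity across this twist. Concretely, the elliptic unit class lives in $H^1_{\mathrm{Iw}}(\mathcal{K}^\Sigma_{p^\infty\mathfrak{f}},\mathbb{Z}_p(1))$ independently of the coefficient character into which it is pushed forward, so $z_{p^\infty\mathfrak{f},\psi,\gamma}$ and $z_{p^\infty\mathfrak{f},\psi',\gamma}$ correspond under the twisting isomorphism on Iwasawa cohomology; the dual exponential map commutes with cyclotomic twists up to a standard factor (a product of $\Gamma$-factors and powers of $p$), which is absorbed into the normalization built into the fixed identification $S(\psi)\simeq S(g_\psi)$; and the substitution $\psi\mapsto\psi\cdot|\cdot|^{r_1}$ shifts the argument of the complex $L$-function, so that the value $L(\bar{\psi'},\chi,\cdot)$ produced by Kato's formula becomes $L(\bar\psi,\chi,r)$ after relabeling. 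Assembling these with the base case gives the stated formula.

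I expect the only genuine difficulty to be bookkeeping: keeping the infinity-type conventions, the precise shift of the $L$-argument produced by the $|\cdot|^{r_1}$-normalization, the comparison between the archimedean CM period entering $\mathrm{per}_\psi$ and the de~Rham period entering $\exp^*$, and the chosen isomorphism $S(\psi)\simeq S(g_\psi)$ all mutually consistent, so that no spurious constant appears. The remaining point---that the identity holds compatibly over every finite $\mathcal{K}'\subset\mathcal{K}(p^\infty\mathfrak{f})$---is already encoded in the norm-compatibility of the elliptic-unit Euler system $z_{p^\infty\mathfrak{f},\psi}$ and requires no further input, while the remark recalled from \cite[Section~5.1]{JK} guarantees that $z_{p^\infty\mathfrak{f},\psi}$ is itself integral in our component, so that there is nothing to clear there.
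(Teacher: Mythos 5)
Your plan correctly isolates the base case (Kato's Proposition~15.9 for infinity type $(-r,0)$) and correctly sees that the general $(-r_1,r_2)$ case should be reached by a cyclotomic twist. But the step you describe as ``only bookkeeping'' --- the assertion that ``the dual exponential map commutes with cyclotomic twists up to a standard factor ($\Gamma$-factors and powers of $p$), absorbed into the normalization built into the fixed identification $S(\psi)\simeq S(g_\psi)$'' --- is precisely the explicit reciprocity law, and cannot be dispensed with by calibrating a single isomorphism of one-dimensional spaces. Passing from $V_L(\psi)$ to a Tate twist changes the Hodge filtration on $D_{\mathrm{dR}}$, so $\mathrm{Fil}^0$ and hence the target of $\exp^*$ at a fixed finite layer $\mathcal{K}'$ change; and although $z_{p^\infty\mathfrak{f},\psi,\gamma}$ and its twisted counterpart correspond in $H^1_{\mathrm{Iw}}$, their projections to $H^1(\mathcal{K}',-)$ are genuinely different specializations picked out by different characters of the cyclotomic tower. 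Relating the resulting dual-exponential values is the content of the Perrin--Riou/Coleman interpolation, not a normalization of constants; no choice of isomorphism $S(\psi)\simeq S(g_\psi)$ can be made simultaneously compatible with all $(-r_1,r_2)$ in this naive way.

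The paper's proof supplies the missing input: it first identifies, along the cyclotomic family, the elliptic unit class $z_{p^\infty\mathfrak{f},\psi}$ with the zeta element of the CM modular form $g_\psi$ (Kato (15.16.1), using the identifications $S(\psi)\simeq S(g_\psi)$, $V_L^\sim(\psi)\simeq V_L(g_\psi)$, and the Zariski density of arithmetic points of type $(-r,0)$ where the base case applies), and then invokes Kato's Theorem~12.5 --- the explicit reciprocity law for modular-form zeta elements --- which packages exactly this interpolation across cyclotomic twists and thus across all infinity types $(-r_1,r_2)$. Your argument would be complete if, in place of the ``absorbed into normalization'' sentence, you invoked Theorem~12.5 via the identification with $g_\psi$, or some other form of the explicit reciprocity law that governs $\exp^*$ along the cyclotomic tower; as written, the reduction from $(-r_1,r_2)$ to $(-r,0)$ is not justified. (A secondary notational point: $\psi\cdot|\cdot|^{r_1}$ has type $(0,r_1+r_2)$, which is not of the base-case form $(-r,0)$, so even the starting character of your twist must be chosen more carefully; this matches an apparent slip in the paper's own definition of $V(\psi)$ for general type, but you should sort it out so the twist actually lands on a base-case character.)
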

\begin{remark}
A more precise way to state the result is the image under the $\exp^*$ map is $\frac{L(\bar{\psi},\chi,r_1-r_2)}{\Omega^{r_1+r_2}_{\omega_E}}\cdot \omega_E^{r_1+r_2}$
where $\Omega^{r_1+r_2}_{\omega_{E}}$ is the ratio between $\omega^{r_1+r_2}_E$ over $\gamma$ under the deRham isomorphism over $\mathbb{C}$, the motive for $\psi$ is viewed as the Serre tensor of $E$ by a finite order character of $\mathbb{A}^\times_\mathcal{K}/\mathcal{K}^\times$, and $\omega_E$ is viewed as a differential for the motive for $\psi$ over the splitting field of this character.
\end{remark}

\begin{proof}\footnote{We thank D. Loeffler for showing us the following argument.}
If the infinity type of $\psi$ is $(-r,0)$, then this is directly proved in \cite[Theorem 15.9]{Kato}. For the general infinity type case, we use the identification along the cyclotomic family of $z_{p^\infty\mathfrak{f},\psi}$ with the zeta element for the CM modular form $g_\psi$ as in \cite[(15.16.1)]{Kato} (under the above identification of $S(\psi)$ with $S(g_\psi)$ and $V^{\sim}_L(\psi)$ with $V_L(g_\psi)$). The proposition then follows from the explicit reciprocity law for zeta elements by \cite[Theorem 12.5]{Kato}.
\end{proof}
Let $T(\psi)\subset V_L(\psi)$ be the $\mathcal{O}_L$-submodule corresponding to $\mathcal{O}_L\subset L$.
\begin{definition}
Fix a basis $\gamma\in T(\psi)$. Define $z_{p^\infty,\psi}\in H^1_{\mathrm{Iw}}(\mathcal{K}^S_{p^\infty}, T(\psi)):=\varprojlim_{\mathcal{K}'} H^1(\mathcal{K}^{\prime,S}, T(\psi))$ ($\mathcal{K}'$ runs through all finite sub-extensions of $\mathcal{K}_{p^\infty}$) as the image of $z_{p^\infty\mathfrak{f},\psi,\gamma}$.
\end{definition}

Let $\psi_0$ be a self-dual Hecke character of $\mathcal{K}^\times\backslash\mathbb{A}^\times_\mathcal{K}$ with Archimedean type $(-1,0)$.  We first show that the family of elliptic unit $z_{p^\infty,\psi_0}$ is nonzero along the anticyclotomic line.

\begin{lemma}\label{freeness}
There is no point $\phi\in\mathrm{Spec}(\Lambda^-_L)$ such that $\psi_0=\Psi_{\CK,\phi}$ (the specialization of $\Psi_\CK$ to $\phi$) as characters of $G_{\mathcal{K}_p}$. Therefore we have that $H^1(\mathcal{K}_p, \boldsymbol{\psi}_0)$ and $H^1(\mathcal{K}^S,\boldsymbol{\psi}_0)$ are both free over $\Lambda^-_L$. Moreover for each $\phi\in\mathrm{Spec}(\Lambda^-_L)$, $H^1(\mathcal{K}_p, \boldsymbol{\psi}_{0,\phi})$ has rank $2$. Thus $H^1(\mathcal{K}_p, \boldsymbol{\psi}_0)$ is free of rank $2$ over $\Lambda^-_L$.
\end{lemma}
\begin{proof}
If it were the case that $\psi=\Psi_{\CK,\phi}$, since $\Psi_{\CK,\phi}=\Psi^{-c}_{\CK,\phi}$, the Hodge-Tate weights of $\Psi_{\CK,\phi}$ should add up to $0$, contradicting that the Hodge-Tate weight for $\psi_0$ is $(1,0)$. For the second statement, note that $\Lambda^-_L$ is a principal ideal domain, it is enough to show that the modules are torsion-free, which in turn follows from the first statement (by considering the Galois cohomology long exact sequence associated to the short exact sequence of multiplying by a nonzero prime element $\varpi\in \Lambda^{-}_L$ on $\boldsymbol{\psi}_0$).  The third statement follows from the first statement and the Euler-Poincar\'e characteristic formula (say \cite[Theorem 2.17]{DDT}).
\end{proof}

\begin{proposition}\label{nontriviality}
The $z_{p^\infty,\psi_0}$ is a nonzero element in $H^1(\mathcal{K}^S, \boldsymbol{\psi}_0)$.
\end{proposition}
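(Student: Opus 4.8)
The plan is to detect the nontriviality of the norm-compatible system $z_{p^\infty,\psi'}$ by computing the image of one of its anticyclotomic specializations under the dual exponential map and identifying it with a nonzero Hecke $L$-value. I would apply the explicit reciprocity law of the preceding Proposition with its ``$\psi$'' taken to be $\psi'$ (whose infinity type $(-1,0)$ is of the admissible shape $(-r_1,r_2)$, $r_i\ge 0$, with $r_1=1$, $r_2=0$), and with $\mathcal{K}'$ a finite layer of $\mathcal{K}^-_\infty$ together with a finite-order anticyclotomic character $\chi$: the Proposition then identifies the image under $\exp^{*}$, followed by the period map $\sum_{\sigma}\chi(\sigma)\,\mathrm{per}_{\psi'}\circ\sigma$, of the relevant component of $z_{p^\infty,\psi'}$ with $L(\overline{\psi'},\chi,1)\cdot\gamma$ up to a nonzero period factor. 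Since $\exp^{*}$ and all the specialization maps in sight are $\Lambda^-\otimes\mathbb{Q}_p$-linear, a single anticyclotomic $\chi$ with $L(\overline{\psi'},\chi,1)\ne 0$ --- and lying in the range of infinity types to which the reciprocity law applies --- already forces $z_{p^\infty,\psi'}\ne 0$ in $H^1_{\mathrm{Iw}}(\mathcal{K}_\infty,\boldsymbol{\psi}')\otimes\mathbb{Q}_p$.

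To produce such a $\chi$, observe that the relevant admissible range of twists is, after computing infinity types, the origin $\phi_0$ together with exactly one of $\mathcal{X}_1$ or $\mathcal{X}_2$ (the other set consisting of twists whose specialization sits in $H^1_f=\ker\exp^{*}$, where $\exp^{*}$ detects nothing). By Lemma \ref{root number} and its proof, which apply verbatim to $\psi'$, the global root number of $\psi'\chi$ is constant along each of $\mathcal{X}_1$, $\mathcal{X}_2$, equal to $w(\psi')$ on one and $-w(\psi')$ on the other; hence it is $+1$ on at least one of the two families. If the admissible family has root number $+1$, then the central values $L(\overline{\psi'},\chi,1)$ for anticyclotomic $\chi$ of $p$-power conductor in that family are not forced to vanish, and by Rohrlich's nonvanishing theorem for Hecke $L$-functions twisted by anticyclotomic characters of $p$-power conductor, all but finitely many of them are nonzero; one then picks such a $\chi$, adjusting if needed so that the divisibility $t_{\mathcal{K},p}\mid\kappa_\phi$ holds. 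If instead the admissible family has root number $-1$, the central value at $\phi_0$ is unobstructed, and Rohrlich's theorem (applied now to the fixed $\psi'$, for which it suffices that $L(\overline{\psi'},\chi,1)\ne 0$ for some anticyclotomic $\chi$ of $p$-power conductor) again supplies the needed point.

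The step I expect to be the main obstacle is not conceptual but a careful bookkeeping of infinity types and root numbers: one must verify that the anticyclotomic twists furnished by Rohrlich's theorem genuinely lie in the admissible range of the reciprocity law of the Proposition --- rather than in the complementary family, where the class lies in $H^1_f$ and $\exp^{*}$ vanishes identically --- and that the $L$-value that appears is indeed the central value governed by the root number. An alternative route, should the bookkeeping prove delicate, is to identify $z_{p^\infty,\psi'}$ via the Coleman map (which on the present component is an isomorphism up to the ideal $\mathcal{J}_\Lambda$, here the unit ideal, as recalled above) with a unit multiple of the restriction to $\mathcal{K}^-_\infty$ of the two-variable Katz $p$-adic $L$-function, and then to invoke the vanishing of the anticyclotomic $\mu$-invariant of the latter (Hida, Hsieh), which yields the nonvanishing along the anticyclotomic line directly.
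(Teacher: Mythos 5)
Your overall strategy --- apply the paper's explicit reciprocity law and detect nonvanishing via a nonvanishing theorem for anticyclotomic twists --- is indeed what the paper does, but the paper cites Greenberg's nonvanishing theorem for Hecke $L$-functions of varying infinity type (\cite[Theorem~1]{Greenberg1}), not Rohrlich's theorem for finite-order twists of $p$-power conductor. More substantively, your handling of the root-number-$(-1)$ case is incorrect, and this is exactly where the paper does something you have missed. By Lemma~\ref{root number}, the specializations along $\mathcal{X}_1$ carry the \emph{same} root number as $\psi'$, and this same family (together with $\phi_0$) is the admissible one for the reciprocity law as stated --- as confirmed by the paper's proof, which has to ``switch the roles played by $\iota$ and $c\circ\iota$'' precisely in order to get access to $\mathcal{X}_2$. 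So when the admissible family has root number $-1$, so does $\psi'$ itself, and hence so does the twist at $\phi_0$: the central value at $\phi_0$ is \emph{obstructed}, not unobstructed. Your second case therefore proves nothing; no choice of $\chi$ in the admissible region gives a nonvanishing $L$-value. The ingredient you are missing is the swap of the two complex embeddings, which trades $\psi'$ for its conjugate and makes $\mathcal{X}_2$ (where the root numbers are $+1$) the admissible family for the reciprocity law. Without that swap there is no direction in which $\exp^*$ can see a nonzero $L$-value when $w(\psi')=-1$.

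Your suggested alternative route also does not work in the present setting: the identification of the elliptic units with the Katz two-variable $p$-adic $L$-function via a Coleman map, and the Hida--Hsieh vanishing of the anticyclotomic $\mu$-invariant, are both specific to the $p$-ordinary (split) situation; here $p$ is nonsplit, which is the whole point of the paper, and the relevant $p$-adic $L$-function is the Andreatta--Iovita one, for which no such $\mu$-invariant result is in place.
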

\begin{proof}
If the global root number of $\psi_0$ is $+1$, this is a direct consequence of the explicit reciprocity law for elliptic units we recalled above, together with Greenberg's result Proposition \ref{Greenbergnonvanishing}  on non-vanishing of $L$-values.

If the global root number is $-1$, we can show the non-triviality of the elliptic unit family by switching the roles played by $\iota$ and $ \iota^c$ and looking at the image under $\exp^*$ at arithmetic points in $\mathfrak{X}_2$ (where the global root numbers of the specializations are $+1$), and applying Greenberg's result again as above.
\end{proof}
\subsection{Iwasawa theory for elliptic units}
Now we work with the coefficient ring $\Lambda^-_L$. Let $\psi_0$ be a self-dual character of Archimedean type $(-1,0)$. We define various Iwasawa modules below.
Throughout, $*$ always denotes the Pontryagin dual.
\begin{definition}\label{defineSelmer}
Let $$H^1_{\mathrm{str}}(\mathcal{K}^S, \boldsymbol{\psi}_0):=\mathrm{ker}\{H^1(\mathcal{K}^S, \boldsymbol{\psi}_0)\rightarrow H^1(\mathcal{K}_p, \boldsymbol{\psi}_0)\}.$$
 The strict Selmer groups
 \[\mathrm{Sel}^{\mathrm{str}}(\mathcal{K}^S, \boldsymbol{\psi}_0^*):=\mathrm{ker}\{H^1(\mathcal{K}^S, \boldsymbol{\psi}_0^*)\rightarrow \prod_{v\nmid p} H^1(\mathcal{K}_v, \boldsymbol{\psi}_0^*)\times H^1(\mathcal{K}_p, \boldsymbol{\psi}_0^*)\}\]
and the relaxed Selmer groups
\[\mathrm{Sel}^{\mathrm{rel}}(\mathcal{K}^S, \boldsymbol{\psi}_0^*):=\mathrm{ker}\{H^1(\mathcal{K}^S, \boldsymbol{\psi}_0^*)\rightarrow \prod_{v\nmid p} H^1(\mathcal{K}_v, \boldsymbol{\psi}_0^*)\}.\]
Put
$$X^{\mathrm{str}}_{\psi_0}=\mathrm{Sel}^{\mathrm{str}}(\mathcal{K}^S, \boldsymbol{\psi}_0^*)^*$$
and define $X^{\mathrm{rel}}$ similarly.
\end{definition}
We have the following control theorem for strict Selmer groups.
\begin{lemma}\label{strictcontrol}
The specialization map
$$X^{\mathrm{str}}_{\psi_0}\otimes_{\phi_0} L\rightarrow \mathrm{Sel}^{\mathrm{str}}(\mathcal{K},L/\mathcal{O}_L(\psi_0))^*\otimes L$$
is an isomorphism.
\end{lemma}
\begin{proof}
We consider the short exact sequence
$$0\rightarrow \Lambda^*(\boldsymbol{\psi}_0)[X]\rightarrow \Lambda^*(\boldsymbol{\psi}_0)\rightarrow \Lambda^*(\boldsymbol{\psi}_0)\rightarrow 0.$$
Note that $H^1(\mathcal{K}_v, L/\mathcal{O}_L(\psi_0))=0$ for all $v\nmid p$ (say using \cite[Theorem 2.17 (d)]{DDT}).  We just need to study the kernels of
$$H^1(\mathcal{K}, L/\mathcal{O}_L(\psi_0))\rightarrow H^1(\mathcal{K}, \Lambda^*(\boldsymbol{\psi}_0))[X]$$ and
$$H^1(\mathcal{K}_p, L/\mathcal{O}_L(\psi_0))\rightarrow H^1(\mathcal{K}_p, \Lambda^*(\boldsymbol{\psi}_0))[X].$$
These are the cokernels of
$$\times X: H^0(\mathcal{K},\Lambda^*(\boldsymbol{\psi}_0))\otimes_{\mathcal{O}_L}L\rightarrow H^0(\mathcal{K},\Lambda^*(\boldsymbol{\psi}_0))\otimes_{\mathcal{O}_L}L$$
and
$$\times X: H^0(\mathcal{K}_p,\Lambda^*(\boldsymbol{\psi}_0))\otimes_{\mathcal{O}_L}L\rightarrow H^0(\mathcal{K}_p,\Lambda^*(\boldsymbol{\psi}_0))\otimes_{\mathcal{O}_L}L.$$
Taking the Pontryagin dual, we see that they are respectively the dual of the kernels of
$$\times X: \Lambda(\boldsymbol{\psi}_0)_{G_\mathcal{K}}\rightarrow \Lambda(\boldsymbol{\psi}_0)_{G_\mathcal{K}}$$
and
$$\times X: \Lambda(\boldsymbol{\psi}_0)_{G_{\mathcal{K}_p}}\rightarrow \Lambda(\boldsymbol{\psi}_0)_{G_{\mathcal{K}_p}}.$$
The above two kernels are both killed by $X$ and any element $\boldsymbol{\psi}_0(g)-1$ for every $g\in G_{\mathcal{K}_p}$. Clearly $\psi_0|_{G_{\mathcal{K}_p}}$ is not the trivial character. Thus these elements generate the unit ideal and the kernels are both $0$. We are done.
\end{proof}
\begin{lemma}\label{global}
The module $X^{\mathrm{str}}_{\psi_0}$ is torsion over $\Lambda^{-}_L$ and the strict Selmer group $H^1_{\mathrm{str}}(\mathcal{K}^S, \boldsymbol{\psi}_0)$ has rank zero over $\Lambda^{-}_L$.
\end{lemma}
\begin{proof}
The first statement follows from the generic non-triviality of the elliptic unit Euler system over $\Lambda^-$, Proposition \ref{nontriviality}, and the standard Euler system argument (e.g., \cite[Section 2.2]{Rubin3}) bounding the strict Selmer group by the index of the elliptic units. For the second statement, by Galois cohomology long exact sequence, it is enough to show that at some arithmetic point $\phi$ the $H^1_{\mathrm{str}}(\mathcal{K}^S,\boldsymbol{\psi}_{0,\phi})$ has rank $0$. Note that for each $v\nmid p$, $H^1(\mathcal{K}_v, \boldsymbol{\psi}_{0,\phi})$ has $0$ rank for generic $\phi$. The result again follows from the Euler system argument and non-triviality of the elliptic unit Euler system along the anticyclotomic line.
\end{proof}
\begin{corollary}\label{4.3}
The relaxed Selmer group $H^1(\mathcal{K}^S, \boldsymbol{\psi}_0)$ has rank one over $\Lambda^{-}_L$.
\end{corollary}
\begin{proof}
The rank is at least $1$ since we have seen the family of elliptic unit is generically non-trivial along the anticyclotomic line.  We look at the Poitou-Tate exact sequence
\begin{align*}
&0\rightarrow H^1_{\mathrm{str}}(\mathcal{K}^S,\boldsymbol{\psi}_0)\rightarrow H^1(\mathcal{K}^S,\boldsymbol{\psi}_0)\rightarrow H^1(\mathcal{K}_p,\boldsymbol{\psi}_0)&\\&\rightarrow H^1(\mathcal{K}^S, \boldsymbol{\psi}_0^\vee(1)\otimes\Lambda_L^{-,*})^*\rightarrow H^1_{\mathrm{str}}(\mathcal{K}^S, \boldsymbol{\psi}_0^\vee(1)\otimes\Lambda_L^{-,*})^*\rightarrow 0.&
\end{align*}
From Lemma \ref{global} the first and last terms have rank $0$. From the non-triviality of the elliptic units we know the second and fourth terms have ranks at least one. If $H^1(\mathcal{K}^S, \psi_0\otimes\Lambda^{-}_L)$ has rank at least $2$, then $H^1(\mathcal{K}^S, \boldsymbol{\psi}_0^\vee(1)\otimes\Lambda^{-,*}_L)^*$ also  has rank at least $2$. Then by Lemma \ref{freeness} the third term has rank $2$. Thus we get a contradiction.
\end{proof} 

We use $\mathrm{char}$ to denote characteristic ideal.
\begin{theorem}\label{RMCC}
We have
$$\mathrm{char}_{\Lambda^{-}_L}(X^{\mathrm{str}}_{\psi_0})=\mathrm{char}_{\Lambda^{-}_L}(\frac{H^1_{\mathrm{Iw}}(\mathcal{K}^-_\infty, \boldsymbol{\psi}_0)}{\Lambda^{-}_Lz_{p^\infty,\psi_0}}).$$
\end{theorem}
\begin{proof}
By Proposition \ref{nontriviality} the family of zeta elements along the anticyclotomic line is not identically zero. The two-variable Iwasawa main conjecture over the $\mathbb{Z}_p^2$ extension $\mathcal{K}_\infty/\mathcal{K}$ is proved in \cite[Theorem 5.2]{JK}, which generalizes earlier results of Rubin (\cite[Theorem 2 (ii)]{Rubin1}, \cite[Theorem 4.1 (ii)]{Rubin2}) to allow the order of the character divisible by $p$. In \emph{loc.cit.} the result is formulated in terms of Det functor (see \cite[1.3]{JK}) and fundamental line. Our one-variable result follows from the non-vanishing of the elliptic unit Euler system along the anticyclotomic line, and the fact that fundamental line commutes with arbitrary base change. To relate this formulation to the formulation here, we use the similar argument as in \cite[Proposition 7.1]{Kobayashi}. More precisely,
\begin{itemize}
\item Let $T$ be the $p$-adic representation of $G_{\mathcal{K}}$ corresponding to $\psi_0$. Global duality says that $\Sha^2(\mathcal{K}_n^S, \mathrm{Hom}(T,\mathbb{Q}_p/\mathbb{Z}_p(1)))$ is the Pontryagin dual of $\Sha^1(\mathcal{K}_n, S, T)$
where \[\Sha^i(\mathcal{K}_n^S, -)=\mathrm{ker}\{H^i(\mathcal{K}^S_n,-)\rightarrow \prod_{v\in S} H^i(\mathcal{K}_{n,v},-)\}.\]
\item Let $j$ be the natural morphism $\mathrm{Spec} \mathcal{K}_n\rightarrow\mathrm{Spec}\mathcal{O}_{\mathcal{K}_n}[1/p]$. The localization long exact sequence (see \emph{loc.cit.})
\begin{align*}
&0\rightarrow H^1(\mathcal{O}_{\mathcal{K}_n}[1/p], j_*T)\rightarrow H^1(\mathcal{O}_{\mathcal{K}_n}[1/S], T)\rightarrow \oplus_v H^0(k(v), H^1(I_{n,v}, T))\\
&\rightarrow H^2(\mathcal{O}_{\mathcal{K}_n}[1/p], j_*T)
\rightarrow H^2(\mathcal{O}_{\mathcal{K}_n}[1/S], T)\rightarrow \oplus_v H^2(K_{n,v}, T)&
\end{align*}
for intermediate fields $\mathcal{K}_n$ between $\mathcal{K}$ and $\mathcal{K}^-_\infty$, where $v$ runs over all primes outside $p$ dividing $\Sigma$ and $k(v)$ is the corresponding residue field. When comparing the two formulations, the error term comes from the term $H^0(k(v), H^1(I_{n,v}, T))$.
Note that each prime $q\not=p$ split in $\mathcal{K}$ is finitely decomposed in $\mathcal{K}^-_\infty$, and $\varprojlim_n H^0(k(v), H^1(I_{n,v}, T))=0$. Each prime $q\not=p$ non-split in $\mathcal{K}$ splits completely in $\mathcal{K}^-_\infty$, and thus $\varprojlim_n H^0(k(v), H^1(I_{n,v}, T))$ is killed by a fixed (independent of $n$) power of $p$. In each case the above error term vanishes after inverting $p$.
\end{itemize}
\end{proof}

\begin{corollary}\label{1111}
 Suppose $L(\psi_0,s)$ has vanishing order $1$ at $s=1$. Then the specialization $z_{p^\infty,\psi_0}$ at $\phi_0$ has nonzero restriction to $H^1_f(\mathcal{K}_p, \psi_0)$.
\end{corollary}
\begin{proof}
The Gross--Zagier and Kolyvagin theorem implies that the rank of the Bloch-Kato Selmer group for $\psi_0$ is $1$ and is generated by the Mordell-Weil group of the associated $\mathrm{GL}_2$-type abelian variety. Thus by our assumption and Lemma \ref{AV} the rank of its strict Selmer group is $0$. This implies by Lemma \ref{strictcontrol} that $\phi_0$ is not in the support of the characteristic ideal of $X^{\mathrm{str},S}_{\psi_0}\otimes L$ as a module over $\Lambda^{-}_L$. By Rubin's Iwasawa main conjecture, this implies $\phi_0$ is not in the support of the characteristic ideal of $\frac{H^1(\mathcal{K}^S, \boldsymbol{\psi}_0)}{\Lambda^{-}_L\cdot z_{p^\infty,\psi_0}}$, which in turn implies $\phi_0(z_{p^\infty,\psi_0})$ is nonzero. This is an element in the rank $1$ Bloch-Kato Selmer group for $\psi_0$ since it lies in the kernel of the $\exp^*$ map. So by our assumption again, the localization image of $\phi_0(z_{p^\infty,\psi_0})$ in $H^1_f(\mathcal{K}_p,\psi_0)$ is also nonzero.
\end{proof}

\subsection{Main constructions}
 Let $\psi$ be a self-dual character of Archimedean type $(-1,0)$.
 \subsubsection{$p$-adic $L$-functions}\label{padicL}
Let $\chi_3$ be a Hecke character of $\mathcal{K}^\times\backslash \mathbb{A}^\times_\mathcal{K}$ of Archimedean type $(0,0)$, such that $\chi_1:=\psi\chi^{-1}_3$ is unramified at $p$ (and of Archimedean type $(-1,0)$). We require that  $\chi_3$ is unramified outside $p$ and another odd inert prime $\ell_0$. Let $\ell_1$ be an odd inert prime different from $p$ and $\ell_0$ such that $\psi$ and $\chi_3$ are unramified at $\ell_1$. We choose a Hecke character $\chi_{\mathrm{aux}}$ of finite order, unramified outside $\ell_1$ such that $\chi_1\chi_3^c(\chi_{\mathrm{aux}}\chi_{\mathrm{aux}}^{-c})$ has global sign $+1$ and that $L(\chi_1\chi_3^c(\chi_{\mathrm{aux}}\chi_{\mathrm{aux}}^{-c}),1)\not=0$. This is clearly possible: we first take a character of conductor and order a power of $\ell_1$ and use the argument in \cite[Page 247]{Greenberg} for $\ell_1$ to meet the requirement on root number, and then apply  Proposition \ref{Jianonvanishing} to ensure non-vanishing of the central $L$-value. Note that
$\chi_1\cdot\chi_{\mathrm{aux}}\cdot\chi_3\cdot\chi^{-1}_{\mathrm{aux}}=\psi$ and $\chi_1\cdot\chi_{\mathrm{aux}}\cdot(\chi_3\cdot\chi_{\mathrm{aux}}^{-1})^c=\chi_1\chi^c_3(\chi_{\mathrm{aux}}
\cdot\chi_{\mathrm{aux}}^{-c})$. We denote $(\chi_1\chi^c_3(\chi_{\mathrm{aux}}
\cdot\chi_{\mathrm{aux}}^{-c}))^c$ as $\psi'$. Let $\xi:=\chi_1\chi_{\mathrm{aux}}$ and $f:=f_\xi$ be the CM form corresponding to $\xi$. Let $\chi:=\chi_3\chi_{\mathrm{aux}}^{-1}$. We will consider the $p$-adic Waldspurger formula for the pair $(f,\chi/\CK)$.

\begin{definition}\label{pLf}
We write $\mathcal{L}^\circ_{\psi}$ for the $p$-adic $L$-function $\CP_\varphi$ in the Eisenstein case which interpolates the algebraic part of the $L$-values $L(\mathcal{K},\boldsymbol{\psi}_\phi)$ for $\phi$ in $\mathfrak{X}_1$ (using the Eisenstein case). Note that the origin point $\phi_0$ is an interpolation point -- this corresponds to $k=1$, $j=0$ in \cite[2.3.1]{AI}. We write $\mathcal{L}_{f,\chi}$ for the anticyclotomic $p$-adic $L$-function $\CP_\varphi$ for the pair $(f,\chi/\CK)$ (with interpolation points in $\mathfrak{X}_4$). Here we allow certain flexibility of the  test vector $\varphi$.
\end{definition}
Note that these $p$-adic $L$-functions live in some ring $\Lambda^{-,\prime}_L\cong \CO_L[[p^{-m}X]]\otimes L$ for $m\gg0$. 
 
\subsubsection{Local definitions over small discs}
\begin{lemma}\label{choicecharacter}
We have
\begin{itemize}
\item There is a finite order anticyclotomic character $\chi'$ whose order and conductor are both a power of an odd prime $\ell_1$ inert in $\mathcal{K}$ which is prime to $p$ and all primes where $\psi$ is ramified, such that $L(\psi\chi',1)\not=0$.
\item There is a finite order anticyclotomic character $\chi''$ whose order and conductor are both a power of an odd prime $\ell_1$ inert in $\mathcal{K}$ which is prime to $p$ and all primes where $\psi$ is ramified, such that $L(\psi\chi'',s)$ has vanishing order $1$ at $s=1$.
\end{itemize}
\end{lemma}
\begin{proof}
We take $\chi'$ and $\chi''$ to be finite order characters of the anticyclotomic $\mathbb{Z}_{\ell_1}$-Galois group of $\mathcal{K}$. Note that the local root number of $\psi\chi'$ at $\ell_1$ is determined by the conductor of $\chi'$ at $\ell_1$ (whether being an odd or even power of $\ell_1$, see e.g., \cite[Proposition 3.4]{LX}). Moreover, multiplying by $\chi'$ does not change the local root numbers at primes outside $\ell_1$. We may first manage to take $\chi'$ to make the global root number correct, and then use Proposition \ref{Jianonvanishing} to ensure the vanishing orders of the corresponding $L$-functions are $0$ or $1$ respectively.
\end{proof}
\begin{remark}
The restrictions of $\chi'$ and $\chi''$ to $G_{\mathcal{K}_p}$ are the trivial characters, since they are unramified at $p$, anticyclotomic, and of odd order, and thus $\psi|_{G_{\mathcal{K}_p}}=\psi\chi'|_{G_{\mathcal{K}_p}}=\psi\chi''|_{G_{\mathcal{K}_p}}$.
\end{remark}

Take $\chi^\prime$ as in Lemma \ref{choicecharacter}  and assume 
the image of $z_{p^\infty,\psi\chi'}$ in $H^1(\mathcal{K}_p, \boldsymbol{\psi}\chi')$ do not take $0$-value inside the disc of radius $p^{-m}$ around the origin. We use $\Lambda_L^{-,\prime}$ as the coefficient ring for family of characters.

Choose another auxiliary anticyclotomic character $\chi''$ as in Lemma \ref{choicecharacter}.  The map from the rank $1$ space $H^1_f(\mathcal{K}, \psi\chi'')$ to $H^1_f(\mathcal{K}_p,\psi\chi'')$ is nonzero by  Lemma \ref{AV} below, which is proved in \cite[Theorem 2.8]{BSW} using the $p$-adic analytic subgroup theorem.  \begin{lemma}\label{AV}
Suppose $f$ is a weight $2$ modular form with trivial character whose associated $\mathrm{GL}_2$-type abelian variety is $A_f$. Here $A_f$ has dimension $[F:\mathbb{Q}]$ where $F$ is the Hecke field of $f$ (which is totally real). For any prime $\mathfrak{p}$ above $p$,  let $V_{f,\mathfrak{p}}$ be the $\mathfrak{p}$-adic Tate module of $A_f$, which is a rank $2$ Galois representation of $G_{\mathbb{Q}_p}$ over $F_{\mathfrak{p}}$. Suppose the vanishing order of $L(f,s)$ at $s=1$ is $1$. Then for each such prime $\mathfrak{p}$ above $p$, the map from the rank $1$ space $H^1_f(\mathbb{Q}, V_{f,\mathfrak{p}})$ to $H^1(\mathbb{Q}_p, V_{f,\mathfrak{p}})$ is injective.
\end{lemma}
 Note that $z_{p^\infty,\psi\chi'}$ (resp. $z_{p^\infty,\psi\chi''}$) specializes to elements in $H^1_f(\mathcal{K}, -)$ at arithmetic points in $\mathfrak{X}_2$ (resp. $\mathfrak{X}_1$) by Lemma \ref{root number} and the explicit reciprocity law for elliptic units, switching the roles played by $\iota$ and $\iota^c$.
\begin{definition}\label{mainlocaltheory}
Let $H^1_-(\mathcal{K}_p, \boldsymbol{\psi})\subset H^1(\mathcal{K}_p,\boldsymbol{\psi})$ and $H^1_+(\mathcal{K}_p,\boldsymbol{\psi})$ be the $\Lambda^{-,\prime}_L$-module generated by the image of $z_{p^\infty,\psi\chi'}$ and $z_{p^\infty,\psi\chi''}$ respectively. We sometimes just write them as $H^1_-$ and $H^1_+$ for short. Clearly $H^1_-$ (resp. $H_+^1$)  specializes to $H^1_f$ at all points  $\phi\in\mathfrak{X}_2$ (resp. $\fX_1$). 
\end{definition}

By Corollary \ref{1111}, we see that the specialization of $z_{p^\infty,\psi\chi''}$ to $\phi_0$ has nonzero image in $H^1_f(\mathcal{K}_p,\psi)$. Thus the specializations of $H_\pm^1$ to $\phi_0$ span the $2$-dimensional vector space $H^1(\mathcal{K}_p,\psi)$. Indeed one of them is the $1$-dimensional vector space $H^1(\mathcal{K}_p,\psi)/H^1_f(\mathcal{K}_p,\psi)$ (from the explicit reciprocity law and $L(\psi\chi',1)\not=0$), and the other one is $H^1_f(\mathcal{K}_p, \psi)$. Recall that $H^1(\mathcal{K}_p,\boldsymbol{\psi})$ is free of rank $2$ over $\Lambda^{-,\prime}_L$. We have the following   analogue of Rubin's conjecture in our context, which is much easier to see.
\begin{proposition}
By possibly enlarging the $m$, we can ensure that over $\Lambda^{-,\prime}_L$,
\begin{equation}\label{RMC}
H^1(\mathcal{K}_p,\boldsymbol{\psi})=H^1_+(\mathcal{K}_p,\boldsymbol{\psi})\oplus H^1_-(\mathcal{K}_p,\boldsymbol{\psi}).
\end{equation}
\end{proposition}
\begin{proof}
Indeed we take a basis of $H^1(\mathcal{K}_p,\boldsymbol{\psi})$ over $\Lambda^{-,\prime}_L$ and express $z_{p^\infty,\psi\chi''}$ and $z_{p^\infty,\psi\chi'}$ in terms of this basis. The determinant of this matrix takes nonzero value at $\phi_0$. Therefore as an analytic function, it is an invertible element in a small disc around $\phi_0$. Thus by linear algebra, $z_{p^\infty,\psi\chi''}$ and $z_{p^\infty,\psi\chi'}$ also form a basis of $H^1(\mathcal{K}_p,\boldsymbol{\psi})$.
\end{proof}

\begin{lemma}\label{characterize1}
The $H^1_\pm$ are characterized by all classes specializing to $H^1_f$ for all $\phi$ in $\mathfrak{X}_1$ and $\mathfrak{X}_2$ respectively.
\end{lemma}
\begin{proof}
Suppose $a_+\in H^1_+$ and $a_-\in H^1_-$ are such that $a_++a_-$ specializes in $H^1_f$ for all $\phi\in\mathfrak{X}_1$, then $a_-=0$ by Proposition \ref{Greenbergnonvanishing} (since $\boldsymbol{\psi}_\phi\chi'$ has global root number $+1$) and the explicit reciprocity law for elliptic units, which implies the image of $z_{p^\infty,\psi\chi'}$ has nonzero image in $H^1(\mathcal{K}_p, \boldsymbol{\psi}_\phi\chi')/H^1_f(\mathcal{K}_p, \boldsymbol{\psi}_\phi\chi')$ for these $\phi$. Thus if we write $a_-=f_1z_{p^\infty,\psi\chi'}$ with $f_1\in\Lambda^{-,\prime}_L$, then $f_1$ has to take $0$ value at a $p$-adically dense set. Thus $f_1$ is identically zero. If $a_++a_-$ specializes to an element in $H^1_f$ for all $\phi\in\mathfrak{X}_2$, then $a_+=0$ by the same argument using Proposition \ref{Greenbergnonvanishing} and the explicit reciprocity law for elliptic units, switching the roles played by $\iota$ and $\iota^c$. (Note that the global root numbers for $\boldsymbol{\psi}_\phi\chi''$ are $+1$.)
\end{proof}
\subsubsection{Local definitions over localizations of Iwasawa algebra}
Now we develop the local $\pm$-theory over the localization of the anticyclotomic Iwasawa algebra at an appropriate polynomial taking nonzero value at the origin. In this subsubsection, we assume $p\not=2$ and $\zeta_p\in L$.  Recall $\Lambda^-_L=\mathcal{O}_L[[X]]\otimes L$.

Following Pollack \cite{Pollack}, we define that
$$\log^+_p(X)=\frac{1}{p}\prod_n(\frac{\Phi_{2n}(1+X)}{p}),\quad \log^-_p(X)=\frac{1}{p}\prod_n(\frac{\Phi_{2n-1}(1+X)}{p})$$
where $\Phi_m$ is the  $p^m$-th cyclotomic polynomial.
For $1\leq l\leq p-1$, let
$$f_l(X)=\frac{1}{1-\zeta^l_p}((1+X)-\zeta^l_p).$$
Let $f_a$ (resp. $f_b$) be the product of $f_l$ with $l\in[1,p-1]$ running over quadratic residues (resp. non-residues) modulo $p$. Suppose $\mathcal{K}=\mathbb{Q}(\sqrt{-D})$ where $-D$ is the fundamental discriminant. Let
$$f_+=\prod_{2|n} f_a((1+X)^{p^n}-1)\prod_{2\nmid n}f_b((1+X)^{p^n}-1)$$
and
$$f_-=\prod_{2\nmid n} f_a((1+X)^{p^n}-1)\prod_{2| n}f_b((1+X)^{p^n}-1)$$
for $p$ ramified in $\mathcal{K}$ with $D'=\frac{D}{p}$ being a quadratic non-residue modulo $p$, and let
$$f_+=\prod_n f_a((1+X)^{p^n}-1),\quad f_-=\prod_n f_b((1+X)^{p^n}-1)$$
for $p$ ramified in $\mathcal{K}$ such that $D'$ is a quadratic residue modulo $p$.
If $p$ is inert in $\mathcal{K}$, set
$$f_+=\log^+_p, f_-=\log^-_p.$$
These are analogues of Pollack's half logarithms.
Let $\mathfrak{X}_\pm\subset \mathrm{Spec}\Lambda^-_L$ be the zeros of $f_\pm$.

The following lemma is a straightforward application of the local root number formulas. For convenience of  readers we copy the formula in \cite[Proposition 2]{RohrlichRN} here for the ramified case.
\begin{proposition}
Suppose $p$ is ramified in $\mathcal{K}$ and $\chi_p$ is the local component at $p$ of a self-dual Hecke character of $\mathcal{K}^\times\backslash\mathbb{A}^\times_\mathcal{K}$. Let $\varpi$ be a uniformizer of $\mathcal{K}^\times_p$. Then the local root number for $\chi_p$ is $\left(\frac{2}{p}\right)$ if $f=1$, and is $\left(\frac{-2l_{\chi_p}}{p}\right)\chi(\varpi^{f-1})i^\delta$ if $f>1$, where $\left(\cdot\right)$ is the Legendre symbol, $f$ is the exponent of the conductor of $\chi_p$, $l_{\chi_p}$ is an invertible residue class modulo $p$ such that
$$\chi_p(1+\varpi^{f-1})=\exp\left(\frac{2\pi il_{\chi_p}}{p}\right),$$ $\delta=0$ if $p\equiv 1(\mathrm{mod}\ 4)$, and $\delta=1$ if $p\equiv 3(\mathrm{mod}\ 4)$.
\end{proposition}

\begin{lemma}\label{rn}
For one of $\mathfrak{X}_\pm$, for all but finitely many points $\phi$ in it, we have the root numbers $r(\boldsymbol{\psi}_{\phi,p})=r(\psi_p)$, and $r(\boldsymbol{\psi}_\phi)=r(\psi)$. For the other one of $\mathfrak{X}_\pm$, for all but finitely many points $\phi$ in it, we have $r(\boldsymbol{\psi}_{\phi,p})=-r(\psi_p)$ and $r(\boldsymbol{\psi}_\phi)=-r(\psi)$.
\end{lemma}
\begin{proof}
Note that the specialization of $\Psi^-_\mathcal{K}$ at $\phi$ is unramified at all primes outside $p$. Moreover the Archimedean types of these characters are always $(-1,0)$, so the Archimedean local root number is unchanged. If $v\nmid p$ is non-split in $\mathcal{K}$, then the image of the uniformizer $\varpi_v$ in $\Gamma^-_\CK$ under the reciprocity map is the identity, since its square is $1$ and $\Gamma^-_\CK$ is torsion-free. If $v=w\bar{w}$ is split in $\mathcal{K}$, then the local root number for $\boldsymbol{\psi}_\phi$ at $v$ is $\boldsymbol{\psi}_\phi((-1)_w)$ where $(-1)_w$ is the \idele element at $w$ taking value $-1$. Again, the image of $(-1)_w$ in $\Gamma^-_\CK$ under the reciprocity map is the identity, since the square of $(-1)_w$ is $1$ and $\Gamma^-_\CK$ is torsion-free. Thus the local root numbers at these primes are unchanged throughout the family. Now the lemma follows from the above proposition at ramified primes $p$, taking the uniformizer there to be $\sqrt{-D}$. If $p$ is inert then the local formula is easier and essentially determined by the level of the conductor, e.g., given in \cite[Proposition 3.4]{LX}.
\end{proof}
\begin{definition}
We define $\mathfrak{X}'_{\pm}$  as the subset of $\mathfrak{X}_\pm$ respectively, of points satisfying the formulas for root numbers in Lemma \ref{rn}.
\end{definition}

Let $\chi'$ and $\chi''$ be as in Lemma \ref{choicecharacter}. From the above lemma we may find a basis $(v_4,v_5)$ of $H^1(\mathcal{K}_p, \psi\otimes \Lambda^-_L)$ over $\Lambda^-_L$. Let $P_{\psi,\pm}\in \Lambda^-_L$ be the determinant of $(z_{\psi\chi'}, z_{\psi\chi''})$ with respect to this basis. Note that this does depend on the choices of $\chi'$ and $\chi''$. From our choice we know that $\phi_0(P_{\psi,\pm})\not=0$. The $(z_{\psi\chi'}, z_{\psi\chi''})$ form a basis of $H^1(\mathcal{K}_p,\psi\otimes\Lambda^-_{L,P_{\psi,\pm}})$ over  $\Lambda^-_{L,P_{\psi,\pm}}$, the localization at $P_{\psi,\pm}$ of $\Lambda_L^{-}$.  We define $H^1_-(\mathcal{K}_p,\psi\otimes\Lambda^-_{L,P_{\psi,\pm}})$ and $H^1_+(\mathcal{K}_p, \psi\otimes\Lambda^-_{L,P_{\psi,\pm}})$ as the submodules generated by $z_{\psi\chi'}$ and $z_{\psi\chi''}$ respectively. The following proposition is an analogue of Rubin's conjecture and can be proved in the same way as (\ref{RMC}).

\begin{proposition}
We have
\begin{equation}\label{rubincj2}
H^1(\mathcal{K}_p, \psi\otimes\Lambda^-_{L,P_{\psi,\pm}})=H^1_-(\mathcal{K}_p, \psi\otimes\Lambda^-_{L,P_{\psi,\pm}})\oplus H^1_+(\mathcal{K}_p, \psi\otimes\Lambda^-_{L,P_{\psi,\pm}}).
\end{equation}
\end{proposition}

We assume $\mathfrak{X}_+$ is the set in the above Lemma \ref{rn} (the $\mathfrak{X}_-$ case is completely analogous).

\begin{lemma}\label{characterize2}
The module $H^1_+(\mathcal{K}_p,\psi\otimes\Lambda^-_{L,P_{\psi,\pm}})$ can be characterized as sections whose specializations to points in $\mathfrak{X}'_+$ where $P_{\psi,\pm}$ does not take $0$ value live in $H^1_f$, and the module $H^1_-(\mathcal{K}_p,\psi\otimes\Lambda^-_{L,P_{\psi,\pm}})$ can be characterized as sections whose specializations to points in $\mathfrak{X}'_-$ where $P_{\psi,\pm}$ does not take $0$ value live in $H^1_f$.
\end{lemma}
\begin{proof}
This follows easily from the definition, the explicit reciprocity law for elliptic units, Proposition \ref{Jianonvanishing} and Lemma \ref{rn}.
\end{proof}

\section{Perrin-Riou's main conjecture and $p$-converse}
Let $\psi$ be a self-dual Hecke character of Archimedean type $(-1,0)$ such that the global root number is $-1$.
To study Iwasawa theory for $p$-adic $L$-functions and Perrin-Riou's main conjecture we need to work with the ring $\Lambda^{-,\prime}_L$. Here $\Lambda^{-,\prime}:=\mathcal{O}_L[[p^{-m}X]]$ and $\Lambda^{-,\prime}_L:=\mathcal{O}_L[[p^{-m}X]]\otimes L$ for some $m\gg0$.

\subsection{Iwasawa theory for $p$-adic $L$-functions}

Let $H^1_\pm(\mathcal{K}_p, (\Lambda^{-,\prime}(\boldsymbol{\psi}))^*)$ be the orthogonal complement of
$H^1_\pm(\mathcal{K}_p, \boldsymbol{\psi})$
under the local Tate pairing. We define various Iwasawa modules over $\Lambda^{-,\prime}$.
\begin{definition}\label{defineSelmer1}
Let
$$H^1_\pm(\mathcal{K}^S, \boldsymbol{\psi}):=\mathrm{ker}\{H^1(\mathcal{K}^S, \boldsymbol{\psi})\rightarrow \frac{H^1(\mathcal{K}_p, \boldsymbol{\psi})}{H^1_\pm(\mathcal{K}_p, \boldsymbol{\psi})}\}.$$
Let
\[\mathrm{Sel}^\pm(\mathcal{K}^S, \boldsymbol{\psi}^*):=\mathrm{ker}\{H^1(\mathcal{K}^S, \boldsymbol{\psi}^*)\rightarrow \prod_{v\nmid p} H^1(\mathcal{K}_v, \boldsymbol{\psi}^*)\times \frac{H^1(\mathcal{K}_p, \boldsymbol{\psi}^*)}{H^1_\pm(\mathcal{K}_p, \boldsymbol{\psi}^*)}\}.\]
Put (again $*$ means Pontryagin dual)
$$X^{\pm}_\psi=\mathrm{Sel}^{\pm}(\mathcal{K}^S, \boldsymbol{\psi}^*)^*.$$
We also define the strict and relaxed Selmer modules as Definition \ref{defineSelmer} except replacing the coefficient ring by $\Lambda^{-,\prime}_L$.
\end{definition}
Composing with the conjugation isomorphism $\circ: \gamma\mapsto \gamma^{-1}$ on the weight space, the anticyclotomic $p$-adic $L$-function in the Eisenstein case  gives  another $p$-adic $L$-function which we denote as $\mathcal{L}^{\bullet}_{\psi}$ interpolating the algebraic part of special $L$-values $L(\boldsymbol{\psi}_\phi,1)=L(\boldsymbol{\psi}^c_\phi,1)$ for $\phi$ in $\mathfrak{X}_2$.  Note that the image of $\mathfrak{X}_2$ under $\circ$ is inside $\mathfrak{X}_1$ and the global root number for these $\boldsymbol{\psi}_\phi$ are $+1$. The following question arises naturally: is there a relation between $\phi_0(\mathcal{L}^{\bullet}_{\psi})$ and the $p$-adic logarithm $\log_p$ of the specialization at $\phi_0$ of $z_{p^\infty,\psi}$ (due to root number reason, it does lie in $H^1_f$)?

We start with the following proposition about factorization of $p$-adic $L$-functions. Recall that $f$ is the eigenform associated to the Hecke character $\xi$, see Subsubsection \ref{padicL}.
\begin{proposition}\label{factorization}
 There is a nonzero constant $C_{\xi,\chi}$ and a unit element $\mathcal{U}_{\xi,\chi}$ in $\Lambda_L^-$ such that
$$\mathcal{L}^2_{f,\chi}=C_{\xi,\chi}\mathcal{U}_{\xi,\chi}\mathcal{L}^{\bullet}_{\xi\chi}
\mathcal{L}^\circ_{\xi\chi^c}.$$
Here the superscript $\circ$ means the conjugation $\circ:\gamma\mapsto \gamma^{-1}$ on the weight space.
\end{proposition}
\begin{proof}
The proposition follows from comparing the interpolation formulas for the various $p$-adic $L$-functions. Indeed note that we take $\phi\in\mathfrak{X}_4$. Then the $L$-function on the left hand side splits up into $2$ $L$-functions for the characters $\xi\boldsymbol{\chi}_\phi$ and $\xi\boldsymbol{\chi}^c_\phi$. These are precisely the $L$-values interpolated by $\mathcal{L}^{\bullet}_{\xi\chi}$ and $\mathcal{L}^\circ_{\xi\chi^c}$.   It is easy to see that we can take the same twisting element $g_{\ell,n}$ for all of them, and thus the $p$-adic periods $\Omega_p$ and Archimedean periods $\Omega_\infty$ involved are the same. We consider the ratio $\frac{\mathcal{L}^2_{f,\chi}}{\mathcal{L}^{\bullet}_{\xi\chi}\mathcal{L}^\circ_{\xi\chi^c}}$ and use our observations right after Proposition \ref{Int} on the local toric integrals to finish the proof.
\end{proof}
\begin{lemma}
The $\mathcal{L}^{\bullet}_\psi$ is not identically zero.
\end{lemma}
\begin{proof}This follows from Proposition \ref{Greenbergnonvanishing} and the interpolation formula for the $p$-adic $L$-function.
\end{proof}

The Iwasawa main conjecture for $\mathcal{L}^{\bullet}_{\psi}$ below is a corollary of Theorem \ref{RMCC}.
\begin{corollary}\label{AIMC}
Assumptions are as in Theorem \ref{RMCC}. Then by possibly shrinking the weight space, we have the Iwasawa main conjecture
$$\mathrm{char}_{\Lambda^{-,\prime}_L}(X^-_\psi)=(\mathcal{L}^{\bullet}_{\psi}).$$
\end{corollary}

\begin{proof}
As in the proof of Lemma \ref{choicecharacter}, we can find a finite order anticyclotomic CM character $\eta$ such that $\eta$ restricts to the trivial character of $G_{\mathcal{K}_p}$, and the vanishing order of $L(\psi\eta, s)$ at $s=1$ is $1$, the vanishing order of $L(\xi^c\chi\eta,s)=L(\psi'\eta, s)$ is $0$ at $s=1$, and that the localization map $H^1_f(\mathcal{K}, \psi\eta)\rightarrow H^1_f(\mathcal{K}_p,\psi\eta)$ is injective. We first prove the following.
\begin{lemma}\label{auxnonzero}
The specialization of $\mathcal{L}^{\bullet}_{\psi\eta}$ at $\phi_0$ is nonzero.
\end{lemma}
\begin{proof}
By our assumption on the vanishing orders of $L$-functions of $\psi\eta$ and $\psi'\eta$, the image of the Heegner point associated to $(f,\chi\eta)$ in $H^1_f(\mathcal{K},\psi\eta\oplus\psi'\eta)$ is nonzero. But $H^1_f(\mathcal{K}, \psi'\eta)$ is trivial since $L(\psi'\eta,1)\not=0$, so the image in $H^1_f(\mathcal{K},\psi\eta)$ is nonzero. Therefore by Lemma \ref{AV} its image in $H^1_f(\mathcal{K}_p,\psi\eta)$ is also nonzero. We see moreover that $H^1_f(\mathcal{K}_p, V_f\otimes\chi\eta)\simeq H^1_f(\mathcal{K}_p, \psi\eta\oplus\psi'\eta)$ is mapped isomorphically to
\begin{equation}\label{deRham}
\mathrm{Fil}^0D_{\mathrm{dR},\mathcal{K}_p}(\psi\eta\oplus\psi'\eta)\simeq\mathcal{K}_p\otimes_{\mathbb{Q}_p}
 L\simeq L\oplus L
\end{equation}
under the $p$-adic logarithm map, where the $\mathcal{K}_p$ acts on the first component by identity and acts by complex conjugation on the second.
By the $p$-adic Waldspurger formula and Proposition \ref{factorization} on the factorization of the $p$-adic $L$-functions, the first component is given by
$\phi_0(\mathcal{L}^{\bullet}_{\psi\eta})  L(\xi^c\cdot\chi\eta,1)\omega_f^\vee$.

Note that the definition of this $p$-adic $L$-function depends on our chosen  embedding $\iota:\bar{\mathcal{K}}\hookrightarrow \mathbb{C}_p$. In the following, we write $\mathcal{L}^\iota_{f,\chi\eta}$  to indicate this dependence. Consider another $p$-adic $L$-function $\mathcal{L}^{\iota^c}_{f,\chi\eta}$ with respect to the $\tilde{c}$-conjugation of $(f,\chi/\CK)$ via the embedding $\iota^c$ (the $p$-adic object is the same as that of $(f,\chi/\CK)$ via the embedding $\iota$). Its interpolation formulas are similar to the $\iota$ case except the interpolation points are for $\phi\in\mathfrak{X}_5$ (while the interpolation points are $ \phi\in \mathfrak{X}_4$ for $\mathcal{L}^\iota_{f,\chi}$).

By Proposition \ref{factorization} and Lemma \ref{root number}, we see that  $\mathcal{L}^{\iota^c}_{f,\chi\eta}$ splits into $2$ CM $p$-adic $L$-functions whose global root numbers are both $-1$ (while the global root numbers for the $2$ factors of $\mathcal{L}^\iota_{f,\chi\eta}$ are both $+1$). Thus $\mathcal{L}^{\iota^c}_{f,\chi\eta}$ is identically zero. By the $p$-adic Waldspurger formula, the second component of the image of the Heegner point for $(f,\chi\eta)$ in (\ref{deRham}) is $0$. Consequently, the first component must be nonzero. This proves $\phi_0(\mathcal{L}^{\bullet}_{\psi\eta})\not=0$.
\end{proof}
\begin{remark}
In fact the above picture is seen more clearly in the simple case when $p$ splits in $\mathcal{K}$, where the $2$ components in (\ref{deRham}) correspond to the $2$ primes $v_0$ and $\bar{v}_0$ above $p$. The second component in (\ref{deRham}) of the image of the Heegner point associated to $(f,\chi\eta)$ is its image in $H^1_f(\mathcal{K}_p, \psi'\eta)$ which must be $0$ since the rank of the Selmer group of $\psi'\eta$ is $0$.
\end{remark}
We return to the proof of the corollary. Consider $w:= z_{p^\infty,\psi\eta}/\mathcal{L}^{\bullet}_{\psi\eta}$. As before, by shrinking the weight space, we may avoid the zero locus of $\mathcal{L}^{\bullet}_{\psi\eta}$ and assume that $w$ is a basis of $H^1_+(\mathcal{K}_p,\boldsymbol{\psi})$ over $\Lambda^{-,\prime}_L$ (note that $\psi$ and $\psi\eta$ restrict to the same character of $\mathcal{K}_p$). Both $z_{p^\infty,\psi}$ and $w$ lie in the $1$-dimensional space $H^1_+(\mathcal{K}_p,\boldsymbol{\psi})$ over $\Lambda^{-,\prime}_L$. Hence we may write $z_{p^\infty,\psi}=f_2\cdot w$ for some $f_2\in\Lambda^{-,\prime}_L$. By comparing the explicit reciprocity law for elliptic units for $\psi$ and $\psi\eta$ as recalled above at the dense set of arithmetic points in $\mathfrak{X}_2$, we see that as elements in $ H^1_+(\mathcal{K}_p,\boldsymbol{\psi})$,
$$z_{p^\infty,\psi}= \mathcal{B}\cdot\mathcal{L}^{\bullet}_{\psi}\cdot w$$
where $\mathcal{B}$ is a nonzero constant times a unit in $\Lambda^{-,\prime}$ coming from local factors in the $p$-adic $L$-functions at primes outside $p$ (see the remarks after Proposition \ref{Int}, noting that the local data at $p$ are the same for $\psi$ and $\psi\eta$ and thus the local factors at $p$ and the $p$-adic and Archimedean periods are the same).
Now the corollary follows from Theorem \ref{RMCC}, (\ref{RMC})
over $\Lambda^{-,\prime}_L$, and the Poitou-Tate exact sequence
$$0 \rightarrow H^1(\mathcal{K}^S, \boldsymbol{\psi})\rightarrow \frac{H^1(\mathcal{K}_p, \boldsymbol{\psi})}{H^1_-(\mathcal{K}_p, \boldsymbol{\psi})}\rightarrow X^-_{\psi}\rightarrow X^{\mathrm{str}}_{\psi}\rightarrow 0.$$
\end{proof}
\begin{remark}
When $p$ is split in $\mathcal{K}$, one has a good understanding of a local regulator map as in \cite[Theorem 4.15]{LZ1}, which is not available in the non-split case here. Thus here instead we use the local basis associated to an auxiliary character $\psi\eta$ to replace the role played by the regulator map in the argument. One may compare with the argument in Section \ref{split case}.
\end{remark}

\subsection{Selmer groups and global duality argument}
In this subsection, we prove an analogue of Perrin-Riou's conjecture for Heegner family and deduce the $p$-converse theorem (similar to \cite{CW}).

We have seen from Corollary \ref{4.3} and Lemma \ref{freeness} that $H^1_{\mathrm{str}}(\mathcal{K}^S, \boldsymbol{\psi})=0$, and $H^1(\mathcal{K}^S, \boldsymbol{\psi})$ is free of rank one over $\Lambda^{-,\prime}_L$ (note that this ring is a discrete valuation ring). From Lemma \ref{localHeegner} below we see that $H^1(\mathcal{K}^S,\boldsymbol{\psi})$ is mapped to $H^1_+(\mathcal{K}_p, \boldsymbol{\psi})$, and
\begin{equation}\label{rel}
H^1_+(\mathcal{K}^S, \boldsymbol{\psi})=H^1(\mathcal{K}^S, \boldsymbol{\psi}).
\end{equation} Thus also $H^1_-(\mathcal{K}^S, \boldsymbol{\psi})=0$.
We have by the Poitou-Tate exact sequence the following:
\begin{equation}\label{(1)}
0\rightarrow H^1_+(\mathcal{K}^S, \boldsymbol{\psi})\rightarrow H^1_+(\mathcal{K}_p, \boldsymbol{\psi})\rightarrow X^{\mathrm{rel}}_{\psi}\rightarrow X^+_\psi\rightarrow 0\end{equation}
and
\begin{equation}\label{(2)}
0\rightarrow H^1(\mathcal{K}^S, \boldsymbol{\psi})\rightarrow \frac{H^1(\mathcal{K}_p, \boldsymbol{\psi})}{H^1_-(\mathcal{K}_p, \boldsymbol{\psi})}\rightarrow X^-_{\psi}\rightarrow X^{\mathrm{str}}_{\psi}\rightarrow 0.
\end{equation}
Let $P$ be the height one prime $(X)$. We focus on the lengths of various Iwasawa modules at $P$ since this is related to the $p$-converse theorem.

The argument in \cite[Lemma 6.7]{CW} shows that for any height one prime $P$ of $\Lambda^{-,\prime}_L$, we have
\begin{equation}\label{(3)}\mathrm{leng}_P(X^{\mathrm{rel}}_{\psi,\mathrm{tor}})=\mathrm{leng}_P(X^{\mathrm{str}}_\psi).
\end{equation}
Here the subscript $\mathrm{tor}$ means the $\Lambda^{-,\prime}_L$-torsion part. Consider the $\Lambda^{-,\prime}_L$-localization map
\begin{equation}\label{locind}
H^1(\mathcal{K}^S,\boldsymbol{\psi})\rightarrow H^1_+(\mathcal{K}_p, \boldsymbol{\psi}).
\end{equation}
Write $\mathrm{locind}_P$ for the length at $P$ of the cokernel of map (\ref{locind}).
Using (\ref{(1)}), (\ref{(2)}) and (\ref{(3)}) we can see that
\begin{equation}\label{exactsequence}
\mathrm{leng}_PX^+_{\psi,\mathrm{tor}}+2\mathrm{locind}_P=\mathrm{leng}_PX^-_{\psi}.
\end{equation}
(Note that from Lemma \ref{global}, Corollary \ref{4.3} and Poitou-Tate exact sequence in the proof there, the rank of $X^{\mathrm{rel}}_\psi$ over $\Lambda^{-,\prime}_L$ is $1$, and the rank of $X^+_\psi$ is also $1$.)

On the other hand we have proved the main conjecture that
\begin{equation}\label{imc}
\mathrm{char}_{\Lambda^{-,\prime}_L}(X^-_{\psi})=(\mathcal{L}^{\bullet}_{\psi}).
\end{equation}
\begin{definition}
We define a virtual Heegner family to be an element $\kappa^{\mathrm{virt}}=\kappa^{\mathrm{virt}}_{f,\boldsymbol{\chi}}$ in $H^1_+(\mathcal{K}^S,\boldsymbol{\psi})$ whose image in $H^1(\mathcal{K}_p,\boldsymbol{\psi})$ satisfies that for the height one prime $P=(X)$,
\begin{equation}\label{vhf}
2\mathrm{leng}_P(\frac{H^1_+(\mathcal{K}_p,\boldsymbol{\psi})}
{(\Lambda^{-,\prime}_L)\kappa^{\mathrm{virt}}})= \mathrm{ord}_P(\mathcal{L}^{\bullet}_{\xi\chi}\cdot\mathcal{L}^\circ_{\xi\chi^c}),
\end{equation}
and the specialization of  $\kappa^{\mathrm{virt}}$ to $\phi_0$ is the Heegner point $\kappa_{f,\chi}$ in Appendix \ref{Heegner cycles in family} associated to the pair $(f,\chi/\CK)$.
\end{definition}
The existence of a virtual Heegner family will be proved in Proposition \ref{isvirtualHeegner}.

Recall that $\xi\chi=\psi$, and $L(\xi\chi^c, 1)\not=0$ by our choice.
\begin{theorem}\label{main proposition}
If the Selmer rank of $\psi$ is $1$, then the specialization $\kappa_{f,\chi}$ of the virtual Heegner family to $\phi_0$ is non-torsion, and thus the vanishing order of $L(\psi, s)$ at $s=1$ is exactly $1$.
\end{theorem}
\begin{proof}
For the height one prime $P:=(X)$,  $$\mathrm{ord}_P(\mathrm{char}(\frac{H^1_+(\mathcal{K}_p,\boldsymbol{\psi})}{(\Lambda^{-,\prime}_L)
\kappa^{\mathrm{virt}}}))
=\mathrm{locind}_P+\mathrm{ord}_P(\mathrm{char}(\frac{H^1_+(\mathcal{K}^S,\boldsymbol{\psi})}
{(\Lambda^{-,\prime}_L)\kappa^{\mathrm{virt}}})),$$
so by (\ref{vhf}), (\ref{imc}) and (\ref{exactsequence}),
\begin{align}\label{PRCJ}
&2(\mathrm{leng}_P(\frac{H^1_+(\mathcal{K}^S,\boldsymbol{\psi})}
{(\Lambda^{-,\prime}_L)\kappa^{\mathrm{virt}}}))&&=
\mathrm{leng}_P(X^{+}_{\psi,\mathrm{tor}})+\mathrm{ord}_P
(\mathcal{L}^\circ_{\xi\chi^c})&\\&&&
=\mathrm{leng}_P(X^+_{\psi,\mathrm{tor}})
.&
\end{align}
Here for the last identity, we use  $L(\xi\chi^c,1)\not=0$. This equation (\ref{PRCJ}) can  be viewed as an analogue of Perrin-Riou's main conjecture for Heegner family in our setting at $P$.

We note that the specialization of the Selmer condition $H^1_+(\mathcal{K}_p, \boldsymbol{\psi})$ to $\phi_0$ is nothing but the $H^1_f(\mathcal{K}_p,\psi)$. Thus by self duality of $H^1_f(\mathcal{K}_p,\psi)$ under the  local Tate duality, we have $$H^1_+(\mathcal{K}_p, \Lambda^{-,\prime}_L(\boldsymbol{\psi})^*)^{\gamma^-=1}=H^1_f(\mathcal{K}_p,\psi).$$ Now under the assumption that the Selmer rank of $\psi$ is $1$, we see that $X$ does not divide $\mathrm{char}_{\Lambda^{-,\prime}_L}(X^{+}_{\psi,\mathrm{tor}})$. Thus the specialization $\kappa^{\mathrm{virt}}$ of the virtual Heegner family to $\phi_0$ must be non-torsion.

But the specialization of the virtual Heegner family to $\phi_0$ is by definition the Heegner point associated to the pair $(f,\chi/\CK)$. It being non-torsion, together with the Gross--Zagier and Kolyvagin theorem imply the last sentence of the theorem.
\end{proof}

\subsection{Construction of the Heegner family}\label{VHC}
In this subsection, we write $\kappa_{\mathcal{F},\chi}$ and $\kappa_{f,\chi}$ for the family of Heegner cycles associated to the pair $(\mathcal{F},\chi/\CK)$ or $(f,\chi/\CK)$ respectively in Appendix \ref{Heegner cycles in family}. Note that as remarked in the appendix, the test vectors at $p$ are different from the test vectors for the explicit reciprocity law, and the resulting $p$-adic Waldspurgers differ by a nonzero constant $C$  in Remark \ref{constantC} depending only on the local data at $p$.
\subsubsection{Over Small Discs}
Now we construct the virtual Heegner family. In Appendix \ref{Heegner cycles in family}, we have a Coleman family $\mathcal{F}$ over a domain, whose specialization at the arithmetic point $\phi'_0$ is $f$.
\begin{lemma}\label{localHeegner}
The image of $H^1(\mathcal{K}^S, \boldsymbol{\psi})$ in $H^1(\mathcal{K}_p,\boldsymbol{\psi})$ lies in $H^1_+(\mathcal{K}_p,\boldsymbol{\psi})$.
\end{lemma}
\begin{proof}
By the explicit reciprocity law, $z_\psi$ lies in $H^1_+(\mathcal{K}_p,\boldsymbol{\psi})$. Recall we have shown that $H^1(\mathcal{K}^S,\boldsymbol{\psi})$ is a free module of rank $1$. We take $v$ as a basis. Then $z_\psi$ is $v$ multiplied by a nonzero element of $\Lambda^{-,\prime}_L$. Thus $v$ itself is also in $H^1_+(\mathcal{K}_p,\boldsymbol{\psi})$ since $H^1(\mathcal{K}_p,\boldsymbol{\psi})=H^1_+(\mathcal{K}_p,\boldsymbol{\psi})\oplus H^1_-(\mathcal{K}_p,\boldsymbol{\psi})$.
\end{proof}
\begin{remark}\label{RKHeegner}
Below we will consider the image of the Heegner family $\kappa_{\mathcal{F},\boldsymbol{\chi}}$ associated to the pair $(\mathcal{F},\boldsymbol{\chi})$ in $H^1(\mathcal{K}_p,\mathcal{V}\otimes\boldsymbol{\chi})$ constructed in Appendix \ref{Heegner cycles in family} where $\mathcal{V}$ is the dual Galois representation associated to the Coleman family $\mathcal{F}$. In particular we will compare different families of characters $\boldsymbol{\chi}$ and $\boldsymbol{\chi}\boldsymbol{\eta}$ (where $\boldsymbol{\eta}$ is some auxiliary family of characters) restricting to the same character on $G_{\mathcal{K}_p}$, where we actually choose different test vectors $\mathcal{F}_1$ and $\mathcal{F}_2$ (at primes not dividing $p$) in the Coleman family. The convention in Appendix B.3 explicitly means the following: at an arithmetic point $\phi$, under the identification of $\mathcal{V}_\phi$ with $\mathcal{V}_{\mathcal{F}^\vee_{1,\phi}}$ and $\mathcal{V}_{\mathcal{F}^\vee_{2,\phi}}$ there, and thus of $H^1_f(\mathcal{K}_p, \mathcal{V}_\phi\otimes\boldsymbol{\chi}_\phi)$ with $H^1_f(\mathcal{K}_p, \mathcal{V}_{\mathcal{F}^\vee_{1,\phi}}\otimes\boldsymbol{\chi}_\phi)$ and $H^1_f(\mathcal{K}_p, \mathcal{V}_{\mathcal{F}^\vee_{2,\phi}}\otimes(\boldsymbol{\chi}\boldsymbol{\eta})_\phi)$, the maps $(\log_p(-), \mathcal{F}^\vee_{1,\phi})$ (taking $p$-adic logarithm and pairing with $\mathcal{F}^\vee_{1,\phi}$) and $(\log_p(-),\mathcal{F}^\vee_{2,\phi})$ coincide. This is easily seen from the construction and the functoriality of $p$-adic logarithm map.
\end{remark}

\begin{proposition}\label{isvirtualHeegner}
After possibly enlarging $m$ and $m'$, the specialization of $\kappa_{\mathcal{F},\boldsymbol{\chi}}$ to $f$ is a virtual Heegner family.
\end{proposition}
\begin{proof}
We show that the image in $H^1(\mathcal{K}_p,\boldsymbol{\psi})$ of the specialization $\kappa_{f,\boldsymbol{\chi}}$ of the Heegner family $\kappa_{\mathcal{F},\boldsymbol{\chi}}$ to $f$ does give us a virtual Heegner family.

Consider
$$\mathrm{Fil}^0D_{\mathrm{dR},\mathcal{K}_p}(V_{\mathcal{F}_\phi}\otimes\boldsymbol{\chi}_\phi)\simeq \mathcal{K}_p\otimes_{\mathbb{Q}_p} L\simeq L\oplus L,$$
where $V_{\mathcal{F}_\phi}$ is the Galois representation associated to $\mathcal{F}_\phi$ twisted by $(-\frac{k_\phi-2}{2})$, $\mathcal{K}_p$ acts on the first factor by identity, and on the second by complex conjugation.  Recall that the definition of the $p$-adic $L$-function depends on the choice of an isomorphism $\iota: \bar{\mathbb{Q}}\hookrightarrow \mathbb{C}_p$. Thus from now on in this proof we write $\mathcal{L}_{\mathcal{F},\chi}^\iota$ to indicate this dependence.

We consider the variable $p$-adic $L$-function $\mathcal{L}^{\iota^c}_{\mathcal{F},\chi}$ which specializes to $\CL_{f,\chi}^{\iota^c}$. We claim that $\mathcal{L}^{\iota^c}_{\mathcal{F},\chi}$ is divisible by the variable $T$. Indeed we specialize $\mathcal{F}$ to $f$ and the Rankin-Selberg $L$-value splits up into two CM $L$-values and the claim follows from the global root number consideration as in the last paragraph of the proof of Lemma \ref{auxnonzero}.

Recall $\psi'=\xi^c\chi$. We can apply Proposition \ref{Jianonvanishing} to find a finite order anticyclotomic character $\eta'$ of conductor and order a power of an odd prime $q'$ inert in $\mathcal{K}$ where $\xi$ and $\chi$ are both unramified, so that $L(\psi\eta',1)\not=0$ and $L(\psi'\eta^{\prime}, s)$ has vanishing order $1$ at $s=1$.  This time we have $\mathcal{L}^\iota_{\mathcal{F},\chi\eta^\prime}$ is divisible by the variable $T$. Keep in mind that the restriction of $\eta'$ to $G_{\mathcal{K}_p}$ is trivial, and the quaternion algebra associated to $(f,\chi)$ and $(f,\chi\eta')$ are the same from our construction.

Using again Proposition \ref{Jianonvanishing} and the argument in our main local definition, we can find a finite order anticyclotomic character $\eta$ of conductor and order a power of $q'$ , such that $L(\psi\eta,s)$ has vanishing order $1$ at $s=1$ and $L(\psi'\eta,1)\not=0$. Again from the construction we see the quaternion algebra associated to the pairs $(f,\chi\eta)$ and $(f,\chi)$ are the same. Then as above we have $\mathcal{L}^{\iota^c}_{\mathcal{F},\chi\eta}$ is divisible by the variable $T$.

Using Lemma \ref{auxnonzero} and Proposition \ref{factorization}, we see that $\phi_0(\mathcal{L}^\iota_{\mathcal{F},\chi\eta})\not=0$ and $\phi_0(\mathcal{L}^{\iota^c}_{\mathcal{F},\chi\eta'})\not=0$. Thus we can (by possibly enlarging $m$ and $m'$) ensure that $\mathcal{L}^{\iota^c}_{\mathcal{F},\chi\eta'}$  and $\mathcal{L}^{\iota}_{\mathcal{F},\chi\eta}$ are invertible elements in $\mathcal{O}_L[[p^{-m'}T, p^{-m}X]]\otimes L$.
Define $A$ and  $B$ in $\mathcal{O}_L[[p^{-m'}T, p^{-m}X]]\otimes L$ such that
$$A=\frac{\mathcal{L}_{\mathcal{F},\chi\eta}^{\iota^c}}
{\mathcal{L}^{\iota^c}_{\mathcal{F},\chi\eta'}},
B=\frac{\mathcal{L}^\iota_{\mathcal{F},\chi\eta'}}{\mathcal{L}^{\iota}_{\mathcal{F},\chi\eta}}.$$

Note that the $A$ and $B$ are all divisible by the variable $T$. By further enlarging $m'$, we may ensure that the determinant of the block matrix
$$\begin{pmatrix}1&A\\B&1\end{pmatrix}$$
is an invertible element in $\mathcal{O}_L[[p^{-m'}T, p^{-m}X]]\otimes L$. Thus its inverse matrix has all entries in $\mathcal{O}_L[[p^{-m'}T, p^{-m}X]]\otimes L$. We consider the expression
$$(C_1, C_2)\begin{pmatrix}1&A\\B&1\end{pmatrix}^{-1}
(\kappa_{\mathcal{F},\boldsymbol{\chi\eta}}, \kappa_{\mathcal{F},\boldsymbol{\chi\eta'}})^t$$
where the $C_i\in\mathcal{O}_L[[p^{-m'}T, p^{-m}X]]\otimes L$ are such that
$$C_1=\frac{\mathcal{L}^\iota_{\mathcal{F},\chi}}{\mathcal{L}^{\iota}_{\mathcal{F},\chi\eta}},
C_2=\frac{\mathcal{L}^{\iota^c}_{\mathcal{F},\chi}}{\mathcal{L}^{\iota^c}_{\mathcal{F},\chi\eta'}}$$
and the superscript $t$ means transpose. Note that the constant $C$ appearing in the $p$-adic Waldspurger formula in Remark \ref{constantC} only depends on the local data at $p$, which must be the same for the pairs $(\mathcal{F},\chi)$, $(\mathcal{F},\chi\eta)$ and $(\mathcal{F},\chi\eta')$ at every arithmetic point $\phi$. Then by considering the images at arithmetic points in $\phi\in\mathfrak{X}_3$ using the $p$-adic Waldspurger formula we proved and linear algebra, we see that this expression coincides with $\kappa_{\mathcal{F},\boldsymbol{\chi}}$ as local Galois cohomology classes at $p$ since they have the same images in $\mathrm{Fil}^0 D_{\mathrm{dR},\mathcal{K}_p}(V_{\mathcal{F}_\phi}\otimes\boldsymbol{\chi}_\phi)\simeq L\oplus L$ under the $p$-adic $\log$ map. Indeed by our $p$-adic Waldspurger formula for the pairs $(\mathcal{F},\boldsymbol{\chi}_{\eta})$ and $(\mathcal{F},\boldsymbol{\chi}_{\eta'})$ at an arithmetic point $\phi\in\mathfrak{X}_3$, we have
$$\begin{pmatrix}1&A\\B&1\end{pmatrix}^{-1}(\kappa_{\mathcal{F},\boldsymbol{\chi\eta}}, \kappa_{\mathcal{F},\boldsymbol{\chi\eta'}})^t$$
has image in the first component $L$ being $(C^\iota\cdot\mathcal{L}^\iota_{\mathcal{F},\boldsymbol{\chi\eta}}(\omega_{\mathcal{F}_\phi}\wedge\omega_A^{j_\phi}\wedge
\eta_A^{r_\phi-j_\phi})^\vee, 0)^t$ where $r_\phi$ and $j_\phi$ are the $r$ and $j$ in our $p$-adic Waldspurger formula at $\phi$, $\vee$ means dual, and the $C^\iota$ is the constant $C$ in Remark \ref{constantC} which depends only on the local data at $p$ (note that $\chi$, $\chi\eta$ and $\chi\eta'$ restrict to the same local character at $p$ and  that we are justified to simply write $\omega_{\mathcal{F}_\phi}$ here, thanks to Remark \ref{RKHeegner}); it (similarly) has image in the second component $L$ being
$(0, C^{\iota^c}\cdot\mathcal{L}^{\iota^c}_{\mathcal{F},\boldsymbol{\chi\eta'}}
(\omega_{\mathcal{F}_\phi}\wedge\omega_A^{j_\phi}\wedge\eta_A^{r_\phi-j_\phi})^\vee)^t$. Multiplying by $(C_1, C_2)$ we get the desired identity at $\phi$.
So by density of arithmetic points $\phi$ in $\mathfrak{X}_3$, we have locally at $p$,
$$\kappa_{\mathcal{F},\boldsymbol{\chi}}=(C_1, C_2)\begin{pmatrix}1&A\\B&1\end{pmatrix}^{-1}
(\kappa_{\mathcal{F},\boldsymbol{\chi\eta}}, \kappa_{\mathcal{F},\boldsymbol{\chi\eta'}})^t$$
Note that the two components in $L\oplus L$ are given by specializations at $\phi$ of the $\iota$ and $\iota^c$ $p$-adic $L$-functions $\mathcal{L}^\iota_{\mathcal{F},\chi}$ and $\mathcal{L}^{\iota^c}_{\mathcal{F},\chi}$ respectively (and similarly for the objects involving $\chi\eta$ and $\chi\eta'$).

Now we specialize $\mathcal{F}$ to $f$. Then as $C_2$ is divisible by $T$, the specialization of $C_2$ is identically $0$. The specializations of $C_1$ is divisible by $X^t$ where $t=\mathrm{ord}_X \mathcal{L}^\iota_{f,\chi}$. We consider the specialization with $T\mapsto 0$ and take
$$\kappa'_{f,\boldsymbol{\chi}}:=(\frac{1}{X^t}C_1, 0)\begin{pmatrix}1&A\\B&1\end{pmatrix}^{-1}
(\kappa_{\mathcal{F},\boldsymbol{\chi\eta}}, \kappa_{\mathcal{F},\boldsymbol{\chi\eta'}})^t.$$
Then clearly
\begin{equation}\label{vhfrelation}
X^t\kappa'_{f,\boldsymbol{\chi}}=\kappa_{f,\boldsymbol{\chi}}
\end{equation}
in $H^1(\mathcal{K}_p, V_f\otimes\boldsymbol{\chi})$. Moreover it is easy to check that the specialization of $\kappa'_{f,\boldsymbol{\chi}}$ to $X\mapsto 0$ has image equal to $(C^\iota\frac{\mathcal{L}^\iota_{f,\chi}}{X^t}(0)\omega_f^\vee, 0)\in \mathrm{Fil}^0 D_{\mathrm{dR},\mathcal{K}_p}(V_f\otimes\chi)\simeq L\oplus L$, which is not zero. However since
$$\kappa'_{f,\boldsymbol{\chi}}=(\frac{1}{X^t}C_1)\kappa_{f,\boldsymbol{\chi\eta}}$$
whose specialization to $X\mapsto 0$ has its image being $0$ in $H^1_f(\mathcal{K}_p, \psi'\eta)$ (recall $V_f\otimes \chi\simeq \psi\oplus\psi'$ as representations of $G_\mathcal{K}$) since the Selmer group of $\psi'\eta$ has rank $0$. We see that the image of $\kappa'_{f,\chi}$ in $H^1_f(\mathcal{K}_p,\psi)$ must be nonzero. Thus combining with Lemma \ref{localHeegner}, the image of $\kappa'_{f,\boldsymbol{\chi}}$ is a basis of $H^1_+(\mathcal{K}_p, \boldsymbol{\psi})$ upon further enlarging $m$. Therefore we have shown that $\kappa_{f,\boldsymbol{\chi}}$ is indeed a virtual Heegner family. Indeed, we have seen that $\mathrm{ord}_{(X)}\mathcal{L}^{\bullet}_{\xi\chi}=t$ and $\mathrm{ord}_{(X)}\mathcal{L}^\circ_{\xi,\chi^c}=0$, thus the property follows from (\ref{vhfrelation}) and Proposition \ref{factorization}.

\end{proof}

\subsubsection{Over Localizations of Iwasawa Algebras}
Let $\mathcal{A}$ be the ring of rigid analytic functions on the open unit disc. Following Pollack \cite{Pollack}, for two elements $F$ and $G$ in $\mathcal{A}$, we say $F$ is $O(G)$ if $\mathrm{sup}_{|z|<r}|F(z)|_p$ is $O(\mathrm{sup}_{|z|<r}|G(z)|_p)$ as $r\rightarrow 1^-$.
We write $F\sim G$ if $F$ is $O(G)$ and $G$ is $O(F)$. Pollack proved that $\log^+_p\sim\log^-_p$. If $F$ is $O(\log^+_p(X))$ then we say $F\in\mathcal{H}_{\frac{1}{2}}$. In the following, assume $p\not=2$.

The following lemma can be proved in the same way as \cite[Sections 4.1, 4.2, 5.1]{Pollack}
\begin{lemma}\label{division}
The  $f_\pm$ are convergent in $\mathcal{A}$, and  $f_\pm\sim \log_p^\pm$. Moreover if $G$ is some element in $\mathcal{H}_{\frac{1}{2}}$ such that $G=hf_+$ (or $G=hf_-$) for some $h\in\mathcal{A}$. Then $h\in\mathcal{O}_L[[X]]\otimes L$.
\end{lemma}

Let $\chi$ and $\chi'$ be as in the previous subsection.
Note that by the norm relation for Heegner points associated to the stabilization $(f_\alpha,\chi)$, we obtain as in \cite[Theorem 6.8.4]{LLZ} a Heegner point family in $H^1(\mathcal{K}^S, \boldsymbol{\psi})\otimes_{\Lambda^-_L}\mathcal{H}_{\frac{1}{2}}$, since the slope of $f$ has $p$-adic valuation $\frac{1}{2}$. By the same argument as in Corollary \ref{4.3} and Lemma \ref{freeness}, we can show that $H^1(\mathcal{K}^S,\boldsymbol{\psi})$ is free of rank $1$. We write $v_1$ for a generator of the free rank-$1$ $\Lambda^-_L$-module $H^1(\mathcal{K}^S,\boldsymbol{\psi})$. Then for any $\phi\in\mathfrak{X}'_-$, $\boldsymbol{\psi}_\phi$ has global root number $+1$. Thus for all but finitely many such $\phi$ we have $L(\boldsymbol{\psi}_\phi,1)\not=0$ by Proposition \ref{Jianonvanishing}. Thus $H^1_f(\mathcal{K}^S, \boldsymbol{\psi}_\phi)=0$ for them. Therefore by Lemma \ref{division}, there are nonzero elements $P_{f,\chi}(X), g_{f,\chi,v_1}(X)\in \Lambda^-_L$ such that $\phi_0(P_{f,\chi})\not=0$, $P_{f,\chi}(X)$ divides $f_-(X)$ and
\begin{equation}\label{bdd Heegner}
P_{f,\chi}(X)\kappa_{f,\boldsymbol{\chi}}^\psi=f_-(X)\cdot g_{f,\chi,v_1}(X)v_1
\end{equation}
where $\kappa_{f,\boldsymbol{\chi}}^\psi$ is the projection to $\psi$-component of $\kappa_{f,\boldsymbol{\chi}}$.
Indeed we just take $P_{f,\chi}$ to be the polynomial such that $P_{f,\chi}(X)$ takes $0$ values at all points $\phi$ in $\mathfrak{X}_-$ such that $L(\boldsymbol{\psi}_\phi,1)=0$. Then the specializations of $P_{f,\chi}(X)\kappa^\psi_{f,\boldsymbol{\chi}}$ to all points in $\mathfrak{X}_-$ are $0$ (since they all specialize to elements in $H^1_f$). We write $P_{f,\chi}(X)\kappa^\psi_{f,\boldsymbol{\chi}}=P_2(X)\cdot v_1$ for $P_2(X)\in\mathcal{H}_{\frac{1}{2}}$. Then by Lemma \ref{division} and noting that $P_2(X)$ is divisible by $f_-(X)$, we get (\ref{bdd Heegner}).
\begin{definition}
We define the bounded Heegner class $\kappa^-_{f,\chi}:=P_{f,\chi}(X)^{-1}g_{f,\chi,v_1}\cdot v_1\in H^1(\mathcal{K}^S,\psi\otimes\Lambda^-_{L, P_{f,\chi}})$. This is clearly independent of the choice of $v_1$.
\end{definition}

\begin{lemma}
If $\psi_3$ is a self-dual character whose global root number is $-1$, then the image of $H^1(\mathcal{K}^S, \psi_3\otimes\Lambda^-_L)$ in $H^1(\mathcal{K}_p,\psi_3\otimes\Lambda^-_{L, P_{\psi_3,\pm}})$ belongs to $H^1_+(\mathcal{K}_p,\psi_3\otimes\Lambda^-_{L, P_{\psi_3,\pm}})$.
If $\psi_4$ is a self-dual character whose global root number is $+1$, then the image of $H^1(\mathcal{K}^S, \psi_4\otimes\Lambda^-_{L,P_{\psi,\pm}})$ in $H^1(\mathcal{K}_p,\psi_4\otimes\Lambda^-_{L, P_{\psi_4,\pm}})$ belongs to $H^1_-(\mathcal{K}_p,\psi_4\otimes\Lambda^-_{L, P_{\psi_4,\pm}})$.
\end{lemma}
\begin{proof}
Suppose first the global root number is $-1$. By the explicit reciprocity law at points in $\mathfrak{X}'_+$ for $\psi_3$, we have that $z_{\psi_3}\in H^1_+$. Let $v_1$ be a generator of the free $\Lambda^-_L$-module $H^1(\mathcal{K}^S,\psi_3\otimes\Lambda^-_L)$. Then $z_{\psi_3}=P_3(X)v_1$ for some $0\not=P_3(X)\in\Lambda^-_L$ (the nonzero follows from Proposition \ref{Jianonvanishing}). Thus $v_1$ must also be in $H^1_+$. The second statement is proved completely similarly.
\end{proof}
\begin{remark}\label{remark1}
In fact for each $\phi\in\mathfrak{X}_+\cup\mathfrak{X}_-$, we have the global root number for the pair $(f,\chi\chi_\phi)$ is $-1$. Indeed the proof of Lemma \ref{rn} implies the local root numbers at all primes outside $p$ are independent of $\phi$. Moreover since the local representation $\pi_{f,p}$ is a principal series, by the theorem of Saito-Tunnell (see \cite[Theorem 1.3]{YZZ}), the local root number for the pair $(f,\boldsymbol{\chi}_\phi)$ at $p$  is also independent of $\phi$.
\end{remark}
Let $v_2$ be a basis of $H^1_+(\mathcal{K}_p, \psi\otimes\Lambda^-_{L, P_{\psi,\pm}})$. Then by the above lemma we see that the image of the bounded Heegner class $\kappa^-_{f,\boldsymbol{\chi}}$ lies in $H^1_+(\mathcal{K}_p, \psi\otimes\Lambda^-_{L,P_{\psi,\pm}P_{f,\chi}})$, which we write as $g_{f,\chi,v_2}\cdot v_2$ for $g_{f,\chi,v_2}\in\Lambda^-_{L, P_{\psi,\pm}P_{f,\chi}}$. Choose $\eta$ as in Proposition \ref{isvirtualHeegner} and  similarly define $P_{f,\chi\eta}$ and $g_{f,\chi\eta,v_2}$. We claim that neither $g_{f,\chi,v_2}$ nor $g_{f,\chi\eta,v_2}$ is identically $0$. Indeed, we look at arithmetic points $\phi$ in $\mathfrak{X}'_+$, for all but finitely many such $\phi$ we have the vanishing order of $L(\boldsymbol{\psi}_\phi,s)$ at $s=1$ is $1$, and $L(\boldsymbol{\psi}'_\phi,s)\not=0$ (by Remark \ref{remark1} these $\boldsymbol{\psi}'_\phi$ have global root numbers $+1$), again by Proposition \ref{Jianonvanishing}. Note also that $f_-$ takes nonzero values at points in $\mathfrak{X}_+$. We have seen that for those $\phi$ the image of the Heegner point in $H^1_f(\mathcal{K}_p, \boldsymbol{\psi}_\phi\oplus\boldsymbol{\psi}'_\phi)$ is nonzero, by Lemma \ref{AV}. But $H^1_f(\mathcal{K}^S, \boldsymbol{\psi}'_\phi)=0$ by our choice. Thus the image of the Heegner point in $H^1_f(\mathcal{K}_p,\boldsymbol{\psi}_\phi)$ is nonzero. This shows the claim for $g_{f,\chi,v_2}$. The claim for $g_{f,\chi\eta,v_2}$ is proved in the same way.

Let $\psi_3=\psi=\xi\chi$, $\psi_4=\xi\chi^c=\psi^{\prime,c}$, $\psi'_3=\xi\chi\eta$ and $\psi'_4=\xi\chi^c\eta^c$. Note that the restrictions of $\psi_3$ and $\psi'_3$ to $G_{\mathcal{K}_p}$ are the same, and the restrictions of $\psi_4$ and $\psi'_4$ to $G_{\mathcal{K}_p}$ are the same. Without loss of generality we assume $\mathfrak{X}_+$ is the set in Lemma \ref{rn} for $\psi_4$ (the $\mathfrak{X}_-$ case is treated similarly). Let $z_{\psi_3}=f_{\psi_3}\cdot v_2$ and $z_{\psi'_3}=f_{\psi'_3}\cdot v_2$ for $f_{\psi_3}$ and $f_{\psi'_3}$ in $\Lambda^-_{L, P_{\psi,\pm}}$. Let $v_3$ be a basis of $H^1_-(\mathcal{K}_p, \psi_4\otimes\Lambda^-_{L, P_{\psi_4,\pm}})$ where $P_{\psi_4,\pm}$ is defined similarly as $P_{\psi,\pm}$ with $\psi$ replaced by $\psi_4$. Let $z_{\psi_4}=f_{\psi_4}\cdot v_3$ and $z_{\psi'_4}=f_{\psi'_4}\cdot v_3$ for $f_{\psi_4}$ and $f_{\psi'_4}$ in $\Lambda^-_{L, P_{\psi_4,\pm}}$. These $f_{\psi_3}$, $f_{\psi'_3}$, $f_{\psi_4}$, $f_{\psi'_4}$ are not identically $0$, by the explicit reciprocity law at arithmetic points in $\mathfrak{X}_+\cup\mathfrak{X}_-$ of global root numbers $+1$ and Proposition \ref{Jianonvanishing}. Note that these depend on the choices of $v_2$ and $v_3$ but we omit the dependence in the notation for simplicity since  below we only consider certain ratios, making these choices irrelevant.
\begin{proposition}\label{bddHeegnerfamily}
We have 
\begin{equation}
(\frac{g_{f,\chi,v_2}}{g_{f,\chi\eta,v_2}})^2=\frac{f_{\psi_3}f^\circ_{\psi_4}}{f_{\psi'_3}f^\circ_{\psi'_4}}\cdot
\prod_{v\nmid p}\frac{C_{f,\chi,v}}{C_{f,\chi\eta,v}}
\end{equation}
as elements in $\mathrm{Frac}(\Lambda^-_L)$,
where each $C_{f,\chi,v}$ and $C_{f,\chi\eta,v}$ are elements in $\Lambda_L^{-,\times}$ interpolating the local toric integrals at $v$ in the $p$-adic Waldspurger formula for $(f,\chi)$ and $(f,\chi\eta)$ respectively. Here the superscript $\circ$ means the automorphism $\Lambda^-_L\rightarrow \Lambda^-_L$ induced by $\Gamma^-\rightarrow \Gamma^-: g\mapsto g^{-1}$.
\end{proposition}
\begin{proof}
Let $\mathcal{S}$ be the subset of $\mathfrak{X}'_+\cup\mathfrak{X}'_-$ excluding the zeros of $P_{\psi,\pm}$, $P_{\psi',\pm}$, and the denominators and numerators of the reduced form of $g_{f,\chi,v_2}$, $g_{f,\chi\eta,v_2}$, $P_{f,\chi}$, $P_{f,\chi\eta}$, $f_{\psi_3}$, $f_{\psi'_3}$, $f_{\psi_4}$ and $f_{\psi'_4}$ in  $\Frac(\Lambda^-_L)$. Write $\mathcal{S}'$ for these excluded points, which is a finite set. Write $\mathcal{S}^\pm=\mathfrak{X}'_\pm\cap \mathcal{S}$. For simplicity, we write $\chi_\phi$ for the specialization of $\Psi^-_\mathcal{K}$ to  an arithmetic point $\phi$. For $\phi\in\mathcal{S}^+$, let $\mathfrak{X}_{1,\phi}$ and $\mathfrak{X}_{2,\phi}$  be the points in $\mathrm{Spec}\Lambda^-_L$ such that  $\Psi^-_\mathcal{K}$ specializes to  $\chi_\phi\chi_{\phi'}$ with $\phi'\in\mathfrak{X}_1$ and $\phi'\in\mathfrak{X}_2$ respectively.

Fix $\phi\in \CS^+$ and consider the specializations of $\mathcal{L}_{f,\chi}$ and $\mathcal{L}_{f,\chi\eta}$ at arithmetic points $\phi''$ in $\mathfrak{X}_{2,\phi}\backslash \mathcal{S}'$. From the interpolation formula of the anticyclotomic $p$-adic $L$-functions, we see that
$$\phi''(\prod_v\frac{C_{f,\chi,v}}{C_{f,\chi\eta,v}})\phi''(\frac{f_{\psi_3}f^\circ_{\psi_4}}
{f_{\psi'_3}f^\circ_{\psi'_4}})=
\phi'(\frac{\mathcal{L}^2_{f,\chi\chi_\phi}}{\mathcal{L}^2_{f,\chi\eta\chi_\phi}})$$
if $\phi''\in \mathfrak{X}_{2,\phi}\backslash \mathcal{S}'$ satisfies that $\chi_{\phi''}=\chi_\phi\chi_{\phi'}$. Note that the local data for the numerators and denominators at $p$ are all the same, thus the local factors at $p$,  the $p$-adic periods  and the Archimedean periods cancel out.
Thus
$$\phi(\prod_v\frac{C_{f,\chi,v}}{C_{f,\chi\eta,v}})
\phi(\frac{f_{\psi_3}f^\circ_{\psi_4}}{f_{\psi'_3}f^\circ_{\psi'_4}})=
\phi_0(\frac{\mathcal{L}^2_{f,\chi\chi_\phi}}{\mathcal{L}^2_{f,\chi\eta\chi_\phi}}).$$

Note that $\psi\chi_{\phi}$ has Hodge-Tate weights $(1,0)$, and $\psi'\chi_{\phi}$ has Hodge-Tate weights $(0,1)$. Consider the action of $\mathcal{O}_{\CK,p}^\times$ on the Tate module of $\psi\chi_{\phi}$ given by composing the $G_{\CK}$-action with the Artin map. Then by the complex multiplication theory and Shimura's reciprocity law, this $\mathcal{O}_{\CK,p}^\times$-action is given by   $\iota$. Consequently,   $\CK_p$ acts on the tangent space of the Tate module via $\iota$. See for example \cite[3.11]{Milne}. Note that $H^1_f(\mathcal{K}_p, V_f\otimes\chi\chi_{\phi})=H^1_f(\mathcal{K}_p, \psi\chi_\phi\oplus\psi'\chi_\phi)$   is mapped isomorphically to the tangent space $\CK_p\otimes L\cong L\oplus L$ of $\psi\chi_\phi\oplus\psi'\chi_\phi$
via the $p$-adic logarithm map. Let $\mathcal{K}_p$ act on the first $L$ via $\iota$ and on the second $L$ via $\iota^c$. Then the $\psi\chi_\phi$-part corresponds to the first $L$, and the $\psi'\chi_\phi$-part corresponds to the second $L$.

Recall that the $p$-adic Waldspurger formulae involving $\mathcal{L}_{f,\chi\chi_\phi}$ and $\mathcal{L}_{f,\chi\eta\chi_\phi}$ deal with the first $L$, i.e. the $\psi\chi_\phi$-part. 
Thus $$\phi_0(\frac{\mathcal{L}_{f,\chi\chi_\phi}}{\mathcal{L}_{f,\chi\eta\chi_\phi}})
=\phi(\frac{g_{f,\chi,v_2}}{g_{f,\chi\eta,v_2}}).$$  
So
$$\phi(\prod_v\frac{C_{f,\chi,v}}{C_{f,\chi\eta,v}})
\phi(\frac{f_{\psi_3}f^\circ_{\psi_4}}{f_{\psi'_3}f^\circ_{\psi'_4}})=
\phi(\frac{g_{f,\chi,v_2}}{g_{f,\chi\eta,v_2}})^2.$$
By density of all such $\phi\in \CS^+$, we deduce the proposition.
\end{proof}
Note that by our choice for $\chi\eta$,  $\phi_0(g_{f,\chi\eta,v_2})$, $\phi_0(f_{\psi'_3})$ (from Corollary \ref{1111}), $\phi_0(f^\circ_{\psi_4})$ and $\phi_0(f^\circ_{\psi'_4})$ are all nonzero. The image of $\kappa^-_{f,\boldsymbol{\chi}}$ in $H^1(\mathcal{K}_p,\psi\otimes\Lambda^-_{L,P_{\psi,\pm}})$ is $\frac{g_{f,\chi,v_2}}{g_{f,\chi\eta,v_2}}$ times the image of $\kappa^-_{f,\boldsymbol{\chi\eta}}$, and $\phi_0(\kappa_{f,\boldsymbol{\chi\eta}})$ has nonzero image in $H^1_f(\mathcal{K}_p, \psi)$. This enables one to prove the $p$-converse theorem in the same way as in the previous section, using the bounded family $\kappa^-_{f,\boldsymbol{\chi}}$. Indeed the $-$ main conjecture for $\psi_3$ states that over the localization of the Iwasawa algebra, the characteristic ideal of the $-$-Selmer groups of $\psi_3$ is given by $f_{\psi_3}$ and can be deduced from the Iwasawa main conjecture for elliptic units as in Corollary \ref{imc}. Then we can deduce the $p$-converse theorem as in Theorem \ref{main proposition}, using Proposition \ref{bddHeegnerfamily} in the place of the property satisfied by the virtual Heegner family. We omit the details.
\begin{remark}
One can also construct the bounded Heegner class for the $\psi'$-component, and express its local image using the anticyclotomic  $p$-adic $L$-function for the embedding $\iota^c$ by looking at the other half of the arithmetic points $\phi$. If we write this class as $\kappa^+_{f,\boldsymbol{\chi}}$, then we have the decomposition $\kappa_{f,\boldsymbol{\chi}}=f_-\kappa^-_{f,\boldsymbol{\chi}}+f_+\kappa^+_{f,\boldsymbol{\chi}}$.
\end{remark}

\section{Split Case}\label{split case}
We now treat the case when $p$ is split in $\mathcal{K}$. This case is much easier than the non-split case -- the Heegner family can be constructed over the anticyclotomic Iwasawa algebra and we do not need to work with Coleman families since there is a Zariski dense set of arithmetic points of weight $2$. The argument here is just a simplified version of the argument in \cite{WAN} (we only care about $p$-converse theorem here). In this section, we set $\Lambda^-=\CO_L[[X]] \hat{\otimes}_{\mathbb{Z}_p}\mathbb{Z}^{\mathrm{ur}}_p$
and 
$\Lambda^{-,\prime}:=\mathcal{O}_L[[p^{-m}X]]\hat{\otimes}_{\mathbb{Z}_p}\mathbb{Z}^{\mathrm{ur}}_p$.  We add the subscript $L$ for the $p$-inverted version. Here $\mathbb{Z}_p^{\mathrm{ur}}$ is the integer ring of the maximal unramified complete extension of $\mathbb{Q}_p$.

Let $p=v_0\bar{v}_0$ in $\mathcal{K}$. Then the $\pm$-local cohomology groups for the family $\boldsymbol{\psi}$ are nothing but the summands in
$$H^1(\mathcal{K}_p, \boldsymbol{\psi})\simeq H^1(\mathcal{K}_{v_0},\boldsymbol{\psi})\oplus H^1(\mathcal{K}_{\bar{v}_0}, \boldsymbol{\psi}).$$
Analogous to the non-split case, we write the first and second summand above as $H^1_+$ and $H^1_-$. Choose the pair of CM characters $\xi$ and $\chi$ as in the non-split case, requiring  $\xi$ to be  unramified at $p$. Then the form $f=f_\xi$ is good ordinary at $p$. Here the associated Galois representation
$$T_f|_{G_\mathcal{K}}\simeq \xi\oplus \xi^c$$
and the action of $c\in\mathrm{Gal}(\mathcal{K}/\mathbb{Q})$ is given by swapping the summands.  Then by \cite[Theorem 4.15]{LZ1} (applied by taking the $T$ there to be $\xi(-1)$ which has Hodge-Tate weight $0$, and restrict the $2$-variable map in \emph{loc.cit.} to our $1$-variable family), there is a big regulator map
$$\mathrm{Log}_{v_0}: H^1(\mathcal{K}_{v_0}, \xi\otimes\boldsymbol{\chi})\hat{\otimes}\mathbb{Z}^{\mathrm{ur}}_p[1/p]\rightarrow \Lambda^-_L.$$
 In \cite{LZ1} the result is stated for $p>2$. The case when $p=2$ is covered in \cite{Venjakob}\footnote{We thank Otmar Venjakob for writing up \cite{Venjakob}.}.  By our $p$-adic Waldspurger formula and the formula for the constant $C$ in Remark \ref{constantC} (note that in the notation there we have $\chi_1\chi_2\xi_1\xi_2=|\cdot|^{-1}$ as representations of $\mathbb{Q}^\times_p$), we have up to a unit in $\Lambda^-_L$,
$$\mathrm{Log}_{v_0}(\kappa_{f,\boldsymbol{\chi}})=\mathcal{L}_{f,\chi}.$$
This is similar to \cite[Theorem 2.8.8]{Castella}, replacing Proposition 2.3.2 of \emph{op.cit.} with the $p$-adic Waldspurger we proved here.

 Note that by \cite[Theorem 4.15]{LZ1}, the specialization to $\phi_0$ of $\mathrm{Log}_{v_0}$ is a nonzero multiple of the $p$-adic $\log$ map. By considering the motivic weight, we see that $\psi_{v_0}$ is neither the trivial character nor the cyclotomic character. Thus the specialization of $\mathrm{Log}_{v_0}$ to $\phi_0$ is an isomorphism. Thus by possibly enlarging $m$, we can ensure that $\mathrm{Log}_{v_0}$ becomes an isomorphism after extending the coefficients to $\Lambda_L^{-,\prime}$.
As in the non-split case $\mathcal{L}_{f,\chi}$ is equal to $\mathcal{L}_{\psi}^{\bullet}\cdot\mathcal{L}^\circ_{\psi'}$ up to an element in $\Lambda_L^{-,\times}$. Moreover by \cite[Theorem 4.15]{LZ1} and \cite{Venjakob} again, we have big regulator maps $$\mathrm{Exp}^*_{\bar{v}_0}:\ H^1(\mathcal{K}_{\bar{v}_0}, \boldsymbol{\psi}')\hat{\otimes}_{\mathbb{Z}_p}\mathbb{Z}^{\mathrm{ur}}_p[1/p]\rightarrow \Lambda_L^-$$
$$\mathrm{Exp}^*_{v_0}:\ H^1(\mathcal{K}_{v_0}, \boldsymbol{\psi})\hat{\otimes}_{\mathbb{Z}_p}\mathbb{Z}^{\mathrm{ur}}_p[1/p]\rightarrow \Lambda_L^-,$$ such that by the explicit reciprocity law for elliptic units, up to an element in $\Lambda_L^{-,\times}$
$$\mathrm{Exp}^*_{v_0}(z_{\psi,p^\infty})=\mathcal{L}^{\bullet}_\psi,\quad \mathrm{Exp}^*_{\bar{v}_0}(z_{\psi',p^\infty})=\mathcal{L}_{\psi'}.$$
 Moreover  by \cite[Theorem 4.15]{LZ1}, the specialization of $\mathrm{Exp}^*_{\bar{v}_0}$ to $\phi_0$ is a nonzero multiple of the $p$-adic $\exp^*$ map, and the specialization of $\mathrm{Exp}^*_{v_0}$ to $\phi_0$ is a nonzero multiple of the $p$-adic $\log$ map. Thus enlarging $m$ if necessary, these regulator maps become isomorphisms when extending coefficients to $\Lambda^{-,\prime}_L\hat{\otimes}_{\BZ_p}\BZ_p^{\ur}$. Now the main theorem uses the same argument as in the proof of Theorem \ref{main proposition}: the $-$-main conjecture for $\mathcal{L}^{\bullet}_\psi$ and the $-$-main conjecture for $\mathcal{L}_{\psi'}$ over $\Lambda^{-,\prime}_L$ follow from the corresponding main conjecture for elliptic units as in the proof of Corollary \ref{AIMC}, using that $\mathrm{Exp}^*_{\bar{v}_0}$ and $\mathrm{Exp}^*_{v_0}$ are isomorphisms over $\Lambda^{-,\prime}_L$. Perrin-Riou's conjecture, namely 
\begin{equation}
    2\mathrm{leng}_P(\frac{H^1(\mathcal{K}^S,\boldsymbol{\psi})}{\Lambda^{-,\prime}_L\kappa_{f,\boldsymbol{\chi}}})=\mathrm{leng}_P X^+_{\psi,\mathrm{tor}}
\end{equation}
(recall $P$ is the prime corresponding to the origin point) follows from the same  argument as  in the proof of Theorem \ref{main proposition}. The $p$-converse theorem follows from Perrin-Riou's conjecture in the same way as the last two paragraphs of the proof of Theorem \ref{main proposition}.

\appendix

\section{Local test vector and local period integral}\label{Local test vector}
In this section, we provide some useful results on local test vectors and local period integrals for Rankin pairs. In the following, \begin{itemize}
\item Let $F$ be a $p$-adic field with ring of integers $\CO$ and uniformizer $\varpi$.
\item Fix a non-trivial additive character $\psi_F: \CO\bs F\to\BC^\times$ of level zero and let $d^\times y$ be the  Haar measure on $F^\times$ with $\Vol(\CO^\times)=1$.

\item  Let $\pi$ be an infinite dimensional irreducible smooth admissible complex $\GL_2(F)$-representation with central character $\omega$. 

\item Let $N\subset P\subset \GL_2(F)$ be the upper unipotent subgroup and upper Borel subgroup respectively. For any $m\in\BN$, let $U_0(m):=\begin{pmatrix}\CO^\times & \CO\\ \varpi^m\CO & \CO^\times\end{pmatrix}$.

\item  Identify $\pi$ (resp. $\pi^\vee$) with its  $\psi_F$-Kirillov (resp. $\bar{\psi}_F$-Kirillov model).
\item Take $n\geq\max\{1,\Cond \pi\}$ and set $\varphi=\Vol(1+\varpi^n\CO)^{-1}\Char_{1+\varpi^n\CO}\in \pi$. 
\end{itemize}
Note that $\varphi$ is fixed by \[\{\begin{pmatrix}a & b\\ c & d\end{pmatrix}\in\GL_2(\CO)| a-1\equiv d-1\equiv 0\mod \varpi^n, \varpi^{2n}\mid c\}\]
and in the $\bar{\psi}_F$-Kirillov model, $\varphi=\Vol(1+\varpi^n\CO)^{-1}\Char_{-1+\varpi^n\CO}$.

 Let $K/F$ be an \'etale quadratic  extension with an embedding $ K\hookrightarrow M_2(F)$ and associated quadratic character $\chi_K$. Let $\chi:\ K^\times\to\BC^\times$ be a smooth character such that $\chi|_{F^\times}\omega=1$. 
 By the result of Saito-Tunnell, $m(\pi,\chi):=\dim\Hom_{K^\times}(\pi,\chi^{-1})\leq1$.  When $m(\pi,\chi)=1$, an element $v\in\pi$ is called a test vector for the pair $(\pi,\chi)$ (this depends on the embedding) if $L(v)\neq0$ for some (hence any)  $L\neq0\in \Hom_{K^\times}(\pi|_{K^\times},\chi^{-1})$.    Note that
\begin{itemize}
    \item when $K=F\oplus F$, $m(\pi,\chi)=1$,
    \item when $K/F$ is a field extension and $\pi$ is non-supercuspidal, the only case $m(\pi,\chi)=0$ is $\pi=\St_F\otimes\xi$
with $\xi\circ N_{K/F}\cdot\chi=1$.
\end{itemize}  
Throughout, we shall assume $m(\pi,\chi)=1$ and the pairing  \[ (-,-):\ \pi\times\pi^\vee\to\BC,\quad (W,W^\vee)\mapsto\int_{F^\times}W(y)W^\vee(y)d^\times y\]
is well-defined. Particularly, this holds when $\pi$ is unitary.   We shall consider the canonical local period 
\[\alpha(\varphi,\pi(J)\varphi\otimes\omega^{-1};\chi)=\frac{L(1,\chi_K)L(1,\pi,\ad)}{\zeta(2)L(1/2,\pi,\chi)}\int_{K^\times/F^\times}(\pi(t)\varphi,\pi(J)\varphi\otimes\omega^{-1})\chi(t)d^\times t.\]
\subsection{The case $K/F$ is a field extension}
In this subsection, assume $K/F$ is a field extension. Take $\ell\geq 2n$ and  let 
$\varphi_{\ell}=\pi(g_{\ell,n})\varphi$ where $g_{\ell,n}=\begin{pmatrix} \varpi^{2n-\ell+1-s} & \varpi^{-\ell+n+1-s}\\ 0 & 1\end{pmatrix}$. Here $s=1$ (resp. $s=2$) when $K/F$ is relatively inert (resp. ramified).

\begin{prop}\label{Test vector} Assume that  $\pi$ is non-supercuspidal and $m(\pi,\chi)=1$. 
 Then $\varphi_{\ell}$  is a test vector for $(\pi,\chi)$ for all $\ell\gg2n$.
\end{prop}

We start by describing $\chi$-eigenvectors for $\pi$ non-supercuspidal.  Let $W_\chi\neq0\in \pi^\vee$ such that $\pi^\vee(t)W_\chi=\chi(t)W_\chi$ for all $t\in K^\times$. 
\begin{lem}\label{eigen}Assume $\pi$ is non-supercuspidal. Then the support of $W_\chi$ is not bounded. More precisely, there exists $C\gg0$ and non-zero constants $c_1$,$c_2$ (resp. $c$) such that when $\val(y)>C$
\begin{itemize}
\item $W_\chi(y)=c_1 \xi_1(y)|y|^{1/2} + c_2 \xi_2(y)|y|^{1/2}$  when  $\pi^\vee=I(\xi_1,\xi_2)$ with $\xi_1\neq\xi_2$,
\item $W_\chi(y)=c_1\xi(y)|y|^{1/2} + c_2\xi(y)|y|^{1/2}\val_\varpi(y)$  when $\pi^\vee=I(\xi_1,\xi_2)$ with $\xi_1=\xi_2=\xi$,
\item $W_\chi(y)=c\xi(y)|y|$  when $\pi^\vee=\St_F\otimes\xi$.
\end{itemize}
 Moreover $W\mapsto (W,W_\chi)$ belongs to $\Hom_{K^\times}(\pi|_{K^\times},\chi^{-1})$.
\end{lem}
\begin{proof}For the moreover part, note that $\pi|_{K^\times}=\oplus\theta^{-1}$ where the summation runs over all $\theta$ such that $m(\pi,\theta)=1$. Set $W_{\theta}\in \pi$ be a non-zero $\theta^{-1}$-eigenvector. Then $W=\sum c_\theta W_\theta$ for some $c_\theta\in\BC$ and the moreover part follows.

 Consider the $P$-subrepresentation $\pi^\vee(N)=\langle \pi^\vee(n)v-v| n\in N,\ v\in \pi^\vee\rangle\subset \pi^\vee$, which just consists of all Schwartz functions on $F^\times$. 
If $W_\chi\in \pi^\vee(N)$, the $\GL_2(F)=PK^\times$-translation of $W_\chi$ all belong to $\pi^\vee(N)$, which forces $\pi^\vee(N)=\pi^\vee$. This contradicts with the assumption $\pi^\vee$ is non-supercuspidal. Same arguments also shows the $P$-submodule generated by $\pi^\vee(N)$ and $W_\chi$ must be $\pi^\vee$. Thus the image of $W_\chi$ in the Jacquet module is a generator. Based on these observations, the remaining part follows from \cite[Theorem 4.7.2 \& 4.7.3]{Bum97}.
\end{proof}

\begin{proof}[Proof of Proposition \ref{Test vector}] Note that 
\begin{align*}
(\varphi_\ell,W_\chi)&=\int_{F^\times}\varphi(\begin{pmatrix} y\varpi^{2n-\ell+1-s} & y\varpi^{-\ell+n+1-s}\\ 0 & 1\end{pmatrix})W_\chi(y)d^\times y\\&=\int_{F^\times}\psi_F(y\varpi^{-\ell+n+1-s})\varphi(y\varpi^{2n-\ell+1-s})W_\chi(y)d^\times y\\
&=\int_{F^\times}\psi_F(y\varpi^{-n})\varphi(y)W_\chi(y\varpi^{\ell+s-1-2n})d^\times y\\
&=\int_{1+\varpi^n\CO}\psi_F(y\varpi^{-n})W_\chi(y\varpi^{\ell+s-1-2n})d^\times y=\psi_F(\varpi^{-n})W_\chi(\varpi^{\ell+s-1-2n})
\end{align*}
where the last equality follows from Lemma \ref{eigen} and the assumption $\ell\gg 2n$. Again by Lemma \ref{eigen}, we deduce the stated result.
\end{proof}
\subsection{The case $K=F\oplus F$}\label{Local test vector II}
In this subsection, we consider the case  $K=F\oplus F$. Take the embedding \[K\hookrightarrow M_2(F),\quad (a,b)\mapsto S^{-1}\begin{pmatrix} b & 0\\ 0 & a\end{pmatrix} S, \quad S=\begin{pmatrix}1 & 1\\ 0 & 1\end{pmatrix}.\]
Assume $J=S^{-1}wS$ with $w=\begin{pmatrix} 0 & 1\\ -1 & 0\end{pmatrix}$ and $\chi=(\chi_1,\chi_2)$. Then
\begin{align*}
    \alpha(W,\pi(J)W\otimes\omega^{-1};\chi)
    =&\frac{L(1,\chi_K)L(1,\pi,\ad)}{\zeta(2)L(1/2,\pi,\chi)} \int_{F^\times}\pi(S)W(y)\chi_2(y)d^\times y\\
   &\times \int_{F^\times}\pi(SJ)W(y)\chi_2^{-1}\omega^{-1}(y)d^\times y
    \end{align*}
 For $W=\varphi$,  $\int_{F^\times}\varphi(y)\chi_2(y)d^\times y=1$.  By the functional equation, 
 \begin{align*}
   \int_{F^\times}\pi(SJ)\varphi(y)\chi_2^{-1}\omega^{-1}(y)d^\times y&=\gamma(1/2,\pi,\chi_2,\bar{\psi}_F)\int_{F^\times}\varphi(y)\chi_2(y)d^\times y\\
   &=\chi_1(-1)\gamma(1/2,\pi,\chi_2,\psi_F)
 \end{align*}
Thus $\varphi$ is  a test vector and 
 \[\alpha(\varphi,\pi(J)\varphi\otimes\omega^{-1};\chi)=\chi_1(-1)\frac{L(1,\chi_K)L(1,\pi,\ad)}{\zeta(2)L(1/2,\pi,\chi)}\epsilon(1/2,\pi,\chi_2,\psi_F)\]
A similar computation shows that if we take $W$ such that $W\otimes\chi_2$ is new in $\pi\otimes\chi_2$, then
  \[\alpha(W,\pi(J)W\otimes\omega^{-1};\chi)=\chi_1(-1)\zeta(2)^{-1}L(1,\chi_K)L(1,\pi,\ad)\epsilon(1/2,\pi,\chi_2,\psi_F)\]

\section{Generalized Heegner cycles in Coleman families}\label{Heegner cycles in family}
In this section, we adapt the results on generalized Heegner cycles in Coleman families from \cite{JLZ} and \cite{Wal25} to our setting. We shall also borrow ideas from the exposition in \cite[Section 6]{Dis22}. Fix a finite order Hecke character $\omega$ of $\BQ$ with $p$-conductor $n_0$.

Let $\sigma$ be a cuspidal automorphic $\GL(2)$-representation of
weight $r+2\geq 2$ and central character $\omega|\cdot|^{-r}$.  In other words, $\sigma=\sigma(f)$ is the cuspidal automorphic representation generated by some cuspidal newform $f$ of weight $r+2$ such that $L(s,\sigma)=L(s+1/2,f)$. 

Let $\chi$ be an algebraic Hecke character of
$\CK$ of infinity type $(r-j,j)$ with $0\leq j\leq r$ and $\chi|_{\BA^\times}\omega|\cdot|^{-r}=1${\footnote{The Rankin pair $(\sigma,\chi)$ in this Appendix differs from that in Section \ref{Explicit reciprocity law} by a twist of $|\cdot|$.}}.  Let $V_\sigma$ be the two-dimensional Galois representation attached to $\sigma$, i.e. $L(s-1/2,V_\sigma)=L(s,\sigma)$. We shall call $(\sigma,\chi)$ or $(V_\sigma,\chi)$  a {\bf balanced} Rankin pair.

 Fix  a small enough tame level $V^p\subset \CK_{\BA}^{p\infty,\times}$.  For $m\geq n_0$, let $V_m=V^pV(m)$ where $V(m)=1+2p^{n_0}\BZ_p+2p^m\CO_{\CK_p}\subset \CO_{\CK_p}^\times$. Let $F_m/\CK$  be the abelian extension with Galois group $\Gal(F_m/\CK)\cong \CK^\times\bs \BA_{\CK}^{\infty,\times}/V_m$.   Let $F_\infty=\cup_mF_m$ and $\tilde{\Gamma}^-=\Gal(F_\infty/\CK)$. We shall view $\chi$ as a character of $\tilde{\Gamma}^-$ via the geometrically normalized Artin map.
Let $\CW(\tilde{\Gamma}^-)$ be the anticyclotomic weight space. Let $\boldsymbol{\kappa}^-$ be the universal character.

Assume $\sigma_p$ is non-supercuspidal and fix a refinement $(\sigma,\varphi_p)$, i.e., $\varphi_p\in \sigma_p^{N(\BZ_p)}$ is $U_p$-eigen with non-zero eigenvalue $\alpha$,  that is {\bf noble} in the sense of~\cite{Han17}. The refinement $(\sigma,\varphi_p)$ determines a point  in the eigencurve (of $\GL(2)$). Take a disc  $D$ in the eigencurve containing this point and let $\CV$ be the   rank-$2$ $G_\BQ$-representation on $D$ which specializes to $V_\sigma$. Let $\tilde{D}\subset D\times\CW(\tilde{\Gamma}^-)$ be the locus cut out by the self-dual condition $\det\CV \cdot \boldsymbol{\kappa}^-\circ \mathrm{Ver}=1$. Here $\mathrm{Ver}: G_{\BQ}^{ab} \to G_{\CK}^{ab}$ is the Verschiebung map. Let $\tilde{\CV}$ be the rank two Galois representation on $\tilde{D}$.

Assume there exists an indefinite quaternion algebra $B$ such that $B_p$ is split and for all $v\nmid p\infty$,  $\epsilon(B_v)\chi_{v}(-1)\chi_{\CK}(-1)=\epsilon(1/2,\sigma_{v}\times\chi_{v})$.  Then attached to the balanced Rankin pair $(V_\sigma,\chi)$, one can define  cohomology class $z^{V_\sigma,\varphi_p,\chi} \in H^1_f(\CK^S,\, V_\sigma\otimes\chi)$ from the generalized Heegner cycle in the Kuga-Sato variety over the Shimura curve associated to $B$.  Here $S$ is a finite set of places containing $p$ and the ramified places of the Galois representation.

\begin{thm}\label{BigHeegner}There  exists a
class $z \in H^1\!\bigl(\CK^S,\tilde{\CV}\bigr)$ 
whose specialization at any balanced Rankin pair  $(V_\sigma,\chi)\in\tilde{D}$ is $z^{V_\sigma,\varphi_p,\chi}$.
\end{thm}
In the following, we shall follow the notations and conventions in Subsection \ref{NC}.  In particular, we   denote the injection $\CK\subset B$ by $\iota_0$. Set $\iota_m=g_m^{-1}\iota_0 g_m$ for $m\geq1$, where $g_m=\begin{pmatrix}p^{m} & 0 \\ 0 & 1\end{pmatrix}$.  Let  $\tau_0^*=-\frac{1}{\tau_0}$, $\tau_m^*=p^{-m}\tau_0^*$ and $\tau_m=p^{m}\tau_0$.  
\subsection{Eigen-distributions}
Denote the algebraic group $\CK^\times$ and $B^\times$ by $H$ and $G$ respectively. 
Let $V=L^2$ (realized as column vectors) denote the standard left $\GL_2(L)$-representation: 
\[g\cdot \begin{pmatrix}a \\ b\end{pmatrix}=g\begin{pmatrix}a \\ b\end{pmatrix}\quad g\in \GL_2(L)\]
Via the pairing \[L^2\times L^2,\quad (\begin{pmatrix}a \\ b \end{pmatrix},\begin{pmatrix}c \\ d \end{pmatrix})\mapsto ac+bd,\] 
one also realizes the dual representation $V^\vee$ as row vectors. For any $r\in \BN$, $\Sym^r V$ can be realized as the space of homogeneous polynomials of degree $r$ in two variables with the action
\[g\cdot f(X,Y)=f((X,Y)g)\quad g\in \GL_2(L)\]
Its dual $(\Sym^r V)^\vee$ can be realized as  the symmetric tensor of $V^\vee$
\[\TSym^r(V^\vee)=\{x\in V^{\vee,\otimes r}| g\cdot x=x\ \forall\ g\in S_r\}\]
With respect to the embedding $\iota_m:\ H\to G$,  $H$ acts on the vector $[\iota(\tau^*_m),1]^T$ and $[\iota^c(\tau^*_m),1]^T$ via $\iota$ and $\iota^c$. Thus $H$ acts on $e_m=[\tau_m, 1]^T$ and $\bar{e}_m=[\bar{\tau}_m, 1]$ via $\iota^{-1}$ and $\iota^{c,-1}$ respectively. Hence for $0\leq a\leq r$, $e_m^{[a,r-a]}:=e_m^a\cdot\bar{e}_m^{r-a}$ generates the unique eigen-line in $\TSym^r(V^\vee)$ on which $H$ acts via $\iota^{-a}(\iota^c)^{a-r}$. 

Fix the $\GL_2(\CO_L)$-stable lattice $T=\CO_L^2\subset V=L^2$. Let $A_{r,n}$ denote the space of power series in the variable $Z$ with coefficients in $L$ which converges on $p^n\CO_{\BC_p}$ and equipped with the $U_0(n)$-action 
\[\begin{pmatrix} a & b\\ c & d\end{pmatrix}\cdot f(Z)=(bZ+d)^rf(\frac{aZ+c}{bZ+d})\]
Note that  $g=\begin{pmatrix}p & 0 \\ 0 & 1\end{pmatrix}$ also acts on $A_{r,n}$ by sending $f(Z)$ to $f(pZ)$. Clearly,  $A_{r,n}$ is a Banach space with  respect to the supremum norm. Let  $A_{r,n}^\circ$  be the unit ball and view $\Sym^rT$ as a $(U_0(n),g)$-submodule of $A_{r,n}^\circ$ via the  injection \[\Sym^rT\hookrightarrow A_{r,n}^\circ,\quad f(X,Y)\mapsto f(Z,1).\]
Let $D_{r,n}^\circ$ be the space of distributions $\Hom_{\CO_L}(A_{r,n}^\circ,\CO_L)$ and let \[D_{r,n}:=D_{r,n}^\circ\otimes_{\CO_L}L=\Hom_{ct}(A_{r,n},L).\] By duality, one has the $(U_0(n),g^{-1})$-equivariant moment map \[\mom^r:\ D_{r,n}^\circ\to\TSym^r(T^\vee)\]

More generally, for any $p$-adically complete and separated $\CO_L$-algebra $R$ equipped with an $n$-analytic character 
$\nu:\ \BZ_p^\times\to R^\times$, i.e., there exists $v\in R[\frac{1}{p}]$ \[\nu(t)=\exp(v\log(t))=\sum_{k\geq0}{v\choose k}(t-1)^k\quad   \forall\ t\in 1+p^n\BZ_p\]
the space $A_{\nu,n}^\circ=\{\sum_{k\geq0}a_k(\frac{Z}{p^n})^k| a_k\in R\to 0\}$
admits an action of $U_0(n)$ and  $g$ given  by the formulae as above.  Similarly, let $D_{\nu,n}^\circ$ be the set of $R$-linear maps $A_{\nu,n}^\circ\to R$. Then $D_{\nu,n}^\circ$ admits an action of $U_0(n)$ and  $g^{-1}$. If there exists a continuous map $r:\ R\to \CO_L$ such that $r\circ \nu$ is the character \[r:\quad \BZ_p^\times\to\CO_L^\times,\quad x\mapsto x^r,\]
then one has the moment map:
\[\mom^r:\ D_{\nu,n}^\circ\hat{\otimes}_{R,r}\CO_L\to \TSym^rT^\vee.\]
We shall use this construction for $R$ being the bounded by 1 functions on an open disc in $ \CW_n\subset \CW$. Here $\CW$ is the weight space parameterizing  locally analytic characters of $\BZ_p^\times$ and $\CW_n\subset \CW$ is the locus consisting of all $n$-analytic characters.

For $m\geq n$,  $\iota(\tau_m)\in p^n\CO_L$ and thus the evaluation-at-$\iota(\tau_m)$
 map $A_{\nu,n}^\circ\to R$ defines an element  $e_{\nu,0,m}\in D_{\nu,n}^\circ$.  Thus $\iota_m(V(m))\subset U_0(n)$. By \cite[Proposition 4.3.1]{JLZ}, $V(m)$ acts on $e_{\nu,0,m}$ by $\iota^{-\nu}:=\nu^{-1}\circ\iota$ and $\mom^r(e_{\nu,0,m})=e_m^{[r,0]}$. Following \cite[Section 4.4]{JLZ}, let \[e_{\nu,j,m}=\prod_j(e_{\nu-j,0,m}\otimes e_m^{[0,j]})\in D_{\nu,n}\] where $\prod_j:\ D_{\nu-j,n}\otimes \TSym^j V^\vee\to D_{\nu,n}$ is the overconvergent projector for $j\in\BN$.  Then $\CO_{p^m}^\times$ acts on $e_{\nu,0,m}$ by $\iota^{-\nu+j}(\iota^c)^{-j}$ and $\mom^r(e_{\nu,j,m})=\begin{cases} e_m^{[r-j,j]} & 0\leq j\leq r\\
0 & j>r\end{cases}$.

\subsection{\'etale local systems and generalized Heegner classes}

Fix a (small enough) tame level $U^p\subset \BB^{p\infty,\times}$ and let $U=U^pU_p$ with $U_p=U_0(n)$ or $U_1(n)$. Let $Y_n$ be the (open) Shimura curve  whose complex points are $G(\BQ)\bs (\BC-\BR)\times G(\BA_f)/ U$. Then $U_0(n)$-representation $D_{R,n}^\circ$ and $\TSym^rT^\vee$ induce \'etale local systems on $Y_n$, which we denote by the same notations. Note that  $Y_n$  parameterizes (false) elliptic curves with certain level structures.  From this viewpoint, $T$ and $T^\vee$ is just (the $e$-component of) the relative \'etale cohomology and  the Tate module of the universal false elliptic curve respectively (with $\CO_L$-coefficients).

Let $A$ be the  underlying (false) elliptic curve corresponding to $P=\iota_0([1])$. Let $V_A$ be (the $e$-component of) the first \'etale cohomology of $A$ with $\CO_L$-coefficient. When $0<r\in2\BN$, let $W_r$ be the canonical desingularization of the $r/2$-fold (resp. $r$-fold) fiber product of the universal false  elliptic curve (resp. elliptic curve) over $X_n$ (the compactification of $Y_n$) with itself and $C_r=W_r\times A^{r/2}$ (resp. $W_r\times A^r$) when $B\neq M_2(\BQ)$ (resp. $B=M_2(\BQ)$). Let $P_m=\iota_m([1])$ and $A_m$ be the underlying (false) elliptic curve. As explained by \cite[Section 2.8.4]{Wal25}, there exists an isogeny  $ A\to A_m$ compatible with the level structures on $A$ and $A_m$ respectively. Denote by $\Gamma_m$  the graph of this isogeny.   By \cite[Section 6]{Bro14} and \cite[Section 2]{BDP}, there exists a projector $\epsilon_W\in \mathrm{Corr}(W_r,W_r)$ such that $\epsilon_WH^*_{et}(W_r,L)=H^1_{et}(X_n, \TSym^r(V))$. The specialization of $\epsilon_W$ at $P$ gives a projector $\epsilon_A\in\mathrm{Corr}(A^{r/2},A^{r/2})$ (resp. $\epsilon_A\in\mathrm{Corr}(A^{r},A^{ r})$) such that $\epsilon_AH_{et}^*(A^{r/2},L)=\TSym^{r}V_A$ (resp. $\epsilon_AH^*(A^{r},L)=\TSym^{r}V_A$). Let  $\epsilon=\epsilon_W\epsilon_A$. Consider   the power $\Gamma_m^{r/2}$ (resp. $\Gamma_m^r$), which is  a cycle in $C_r$ fibered over the CM point $P_m$. The generalized Heegner cycle $\Delta_m:=\epsilon\Gamma_m^{r/2}$ (resp. $\epsilon\Gamma_m^{r}$) has codimension $r+1$. Since $P_m$ and hence $\Delta_m$ is defined over $F_m$,  the image $\Cl(\Delta_m)$ of $\Delta_m$ under the \'etale class map lies in \[H_{et}^2((X_n)_{F_m},  \TSym^r(V^\vee)\otimes \TSym^r(V_A)(1))=\epsilon H^{2r+2}_{et}((C_r)_{F_m}, L)\]
Consider the decomposition $\TSym^r(V_A)=\oplus_{j=0}^r \iota^{r-j}(\iota^c)^j$ and the natural map \[H_{et}^2(X_n,  \TSym^r(V)\otimes \TSym^r(V_A))\to H_{et}^2(Y_n,  \TSym^r(V)\otimes \TSym^r(V_A)).\]
We denote the resulting image of $\Cl(\Delta_m)$ in $H^2_{et}((Y_n)_{F_m}, \TSym^r(V^\vee)\otimes\iota^{r-j}(\iota^c)^j)$ by $z_{m,n}^{[r-j,j]}$.

Let $\pi=\pi^\infty\otimes\pi_\infty$ be a cuspidal  automorphic $\BB^\times$-representation whose Jacquet-Langlands correspondence is $\sigma$. Note that $\pi_p=\sigma_p$. The irreducible smooth  $\BB_f^\times$-representation $\pi^\infty$ admits a model over $M_\pi$, the Hecke field of $\pi$. Base change this model to $L$ and by abuse of notations, denote the outcome also by $\pi^\infty$. Then by \cite[Proposition 2.5.2]{Dis22}, one has the $G_{\BQ}\times \BB^{\infty,\times}$-decomposition 
\[H^1_{et,par}(Y_{U,\bar{\BQ}}, \TSym^r(V)(1))\cong H_{1,et,par}(Y_{U,\bar{\BQ}}, \Sym^r(V))\cong \oplus \tilde{\pi}^{\vee,\infty,U}\otimes V_{\tilde{\pi}}\]
 where \begin{itemize}
 \item par stands for the parabolic part,
 \item $\tilde{\pi}$ runs over all cuspidal automorphic $\BB^\times$-representation  with $\tilde{\pi}_\infty=\pi_\infty$,
 \item $V_{\tilde{\pi}}$ is the $2$-dimensional $G_{\BQ}$-representation such that $L(s-1/2,V_{\tilde{\pi}})=L(s,\tilde{\pi})$.
 \end{itemize}   
Consider the norm $\tilde{z}_{m,n}^{[r-j,j]}=\frac{1}{\sharp Z_m}\mathrm{Cores}_{F_m/\CK}z_{m,n}^{[r-j,j]}$. When $r=0$ and $B\neq M_2(\BQ)$, we modify $\tilde{z}_{m,n}^{[r-j,j]}$ by the Hodge class to make it cohomologically trivial as \cite[Section 6.2]{Dis22}. Take any holomorphic vector $\varphi=\varphi_f\otimes\varphi_\infty\in\pi^U$ and let  $z_m^{\varphi,\chi}=\varphi_f(\tilde{z}_{m,n}^{[r-j,j]})\in H^1(\CK^S, V_\pi\otimes\chi)$ {\footnote{When $B=M_2(\BQ)$ and $\varphi$ corresponds to a newform $f$, our $z^{\varphi,\chi}$ equals $\sharp \CK^\times\BA^{\infty,\times}\bs \BA_{\CK}^{\infty,\times}/V$ times the  $\ z_{et}^{[f,\chi]}$ in \cite[Proposition 3.5.3]{JLZ}}. Note that when $r=0$ and $B=M_2(\BQ)$, the fact $\Hom^\circ(X_U, A_\pi)\otimes_{M_\pi} L\cong \pi^{\infty,U}$ implies   $z^{\varphi,\chi}$ coincides with the image of the modified Heegner class in \cite[Section 6.2]{Dis22} along $\varphi$. Here $A_\pi$ is the $\GL(2)$-type abelian variety attached to $\pi$, and  $\circ$ means sending the Hodge class to the origin. By \cite{NN16}, we actually have $z^{\varphi,\chi}\in H^1_f(\CK^S,V_\pi\otimes\chi)$.

Now take $\varphi_f\in \pi^U\subset \Hom_{G_{\BQ}}(V_\pi,H^1(Y_{n,\bar{\BQ}}, \Sym^r(V)))$ with $p$-component $\varphi_p$. Then by  \cite[Lemma A.2.3]{Dis22} (see also \cite[Proposition 5.1.2]{JLZ}) and tracking our normalization, $z_{m+1}^{\varphi,\chi}=\alpha z_m^{\varphi,\chi}$.  We shall denote $\frac{z_m^{\varphi,\chi}}{\alpha^{m}{r \choose j}}$ by $z^{V_\sigma,\varphi_p,\chi}$.

\begin{remark}\label{constantC}
Note that \[\varphi\mapsto \langle \log_p(z^{\varphi,\chi}), \omega_\pi\rangle\] 
defines an element in the one-dimensional space $\Hom_{\BA_\CK^{\infty,\times}}(\pi^{\infty}\otimes\chi,L)$. This allows us to compare $z^{\varphi,\chi}$ with the generalized Heegner class considered in Section \ref{Explicit reciprocity law} after applying $\langle \log_p(-), \omega_\pi\rangle$.  By the local computation in \cite[Appendix A]{Dis22} and Appendix \ref{Local test vector}, the automorphic form $\varphi$ used in the construction of these classes are local test vectors for all $(\pi_v,\chi_v)$. Thus by multiplicity one, the difference is a non-zero constant $C$ depending on the local data at $p$. Actually when $p$ splits in $\CK$ and $\pi_p\subset I(\xi_1,\xi_2)$, the local test vectors in Section \ref{Explicit reciprocity law} and this Appendix are  $\Vol(1+p^n\BZ_p)^{-1}\Char_{1+p^n\BZ_p}$ and $\xi_i|p^{-n}|^{1/2}\Char_{\BZ_p}(p^ny)$ (in the Kirillov model and $n\gg0$) respectively. Here $\xi_i$ depends on our choice of refinement.  Consider the linear functional
\[\pi_p\to\BC^\times,\quad W\in \pi_p\mapsto \int_{\BQ_p^\times\bs \CK_p^\times}W(t)\chi_p(t)d^\times t\]
and assume the local component $\chi_p=(\chi_1,\chi_2)$. Then by simple computation as in Appendix \ref{Local test vector II} together with \cite[Lemma A.3.1]{Dis22}, one can  show the difference is essentially $\xi_i\chi_2(-1)\gamma(1/2, \xi_i\chi_2,\psi)$.
\end{remark}

\subsection{Generalized Heegner classes in Coleman families}

To $p$-adically interpolate $z_{m,n}^{[r-j,j]}$, we give it an alternative description. Assume $V^p\subset \CK_{\BA}^{p\infty,\times}\cap U^p$ and let $Z_m$ be the Shimura set attached to $H$ of level $V_m=V^pV(m)$. For any $n$-analytic character $\nu:\ \BZ_p^\times\to R^\times$ and $j\in\BN$, the $\CO_{p^m}^\times$-representation $\iota^{\nu-j}(\iota^c)^j$ defines an \'etale local system on $Z_m$. Following \cite{JLZ}, denote the $G\times H$-representation $\Sym^{a+b}(V)\boxtimes \iota^{-a}\otimes(\iota^c)^{-b}$ by $V_{a,b}$ for $a,b\geq0$. Then $H^2_{et}(Y_n\times Z_m, V_{r-j,j}^\vee(1))=H^2_{et}((Y_n)_{F_m}, \TSym^r(T^\vee)\otimes\iota^{r-j}(\iota^c)^j)$.

Consider the map $Z_n\xrightarrow{\delta_m=(\iota_m,\id)} Y_n\times Z_m$ where 
\[\iota_m:\ Z_m\xrightarrow{\iota_0} Y_{g_m^{-1}Ug_m}\xrightarrow{T_{g_m}} Y_n,\quad g_m=\begin{pmatrix}p^m & 0\\ 0 & 1\end{pmatrix}\]
is the composition of the immersion $\iota_0$ induced by the injection $\iota:\ H\to G$ of algebraic groups and  the translation-by-$g_m$ map $T_{g_m}$. For $0\leq j\leq r$, view $e_m^{[r-j,j]}$ as a section of $H_{et}^0(Z_m, \delta_m^*(V_{r-j,j}^\vee))$.  Then by \cite[Section 3.4]{JLZ} and \cite[Section 3.5]{Wal25},
\[z_{m,n}^{[r-j,j]}=(\delta_m)_*(e_m^{[r-j,j]})\in H^2_{et}(Y_n\times Z_m, V_{r-j,j}^\vee(1))\]

View $e_{\nu,j,m}$ as section in $H^0_{et}(Z_m, \delta_m^*(D_{\nu,m}^\circ)\otimes \iota^{\nu-j}(\iota^c)^j))$ and let 
\[z_{\nu,m,n}^{[j]}=(\delta_m)_*(e_{\nu,j,m})\in H^2_{et}(Y_n\times Z_m, D_{\nu,n}^\circ(1)\otimes\iota^{\nu-j}(\iota^c)^j)\]
Easy to check that  $\mom^r(z^{[j]}_{\nu,m,n})=\begin{cases} z_{m,n}^{[r-j,j]} & 0\leq j\leq r\\
0 & j>r\end{cases}$. 

We now can prove Theorem \ref{BigHeegner}. Note that we assume the pair $(\pi, \varphi_p)$ is {\bf{noble}} in the sense of \cite{Han17} which includes the case the slope $v_p(\alpha)$ is non-critical.
\begin{proof}[Proof of Theorem \ref{BigHeegner}]
  Under the noble assumption, the theory of eigencurve $\CC$ (see \cite{Han17}) guarantees that there exists
\begin{itemize} 
\item a small open disc $ D=\Spf(R)\subset \CC$ containing $(\pi,\varphi_p)$ and $\mathrm{wt}(D)\subset \CW_n$ where $\mathrm{wt}:\ \CC\to\CW$ is the weight map 
\item a rank $2$  free $R$-module  $\CV$ with continuous $G_{\BQ}$-action whose specialization at $(\pi,\varphi_p)$ is $V_\pi$ 
\item a Coleman family $\CF\in\Hom_{G_\BQ}(\CV^\vee(1), H^1_{et}(Y_{n,\bar{\BQ}}, A_{\boldsymbol{\kappa}_D,n})^{\leq v_p(\alpha)})${\footnote{We expect the slope $\leq v_p(\alpha)$-part $\Hom_{G_\BQ}(\CV^\vee(1), H^1_{et}(Y_{n,\bar{\BQ}}, A_{\boldsymbol{\kappa}_D,n})^{\leq v_p(\alpha)})$ can be described by the smooth admissible representation attached to $\CV$ via the local Langlands correspondence in families (see \cite{EH14}). When $B=M_2(\BQ)$, this kind of local-global compatibility is basically established in \cite{Pau11}. We shall report its generalization to general $B$  elsewhere.}} with $U_p$-eigenvalue $\alpha_\CF$ such that $\CF_r:=\mom^r(\CF)$ coincides with the image of $\varphi$
in $ \Hom_{G_{\BQ}}(V_\pi,H^1(Y_{n,\bar{\BQ}}, \Sym^r(V)))$. Here $\boldsymbol{\kappa}_D$ is the universal character of $\mathrm{wt}(D)$.
\end{itemize}
The Coleman family $\CF$ gives a map $H^1(Y_{n,\bar{\BQ}}, D_{\boldsymbol{\kappa}_D,n}(1))^{\leq v_p(\alpha)}\to\CV$. For each $j$, let $z^{[j]}_{\CF,m}$ be the image of $z_{\boldsymbol{\kappa}_D,m,n}^{[j]}$ along $\CF$. Then  by \cite[Lemma A.2.3]{Dis22},
\[\frac{\mathrm{Cores}_{F_{m+1}/F_m} z_{\CF,m+1}^{[j]}}{\sharp \Gal(F_{m+1}/F_m)}=\alpha_\CF\cdot z^{[j]}_{\CF,m}\]
Take $m_0$ sufficiently large such that $\tilde{\Gamma}_{m_0}^{-}:=\Gal(F_\infty/F_{m_0})\cong \BZ_p$. Let $\CW(\tilde{\Gamma}_{m_0}^-)$ be its weight space and $\boldsymbol{\kappa}_{m_0}^-$ be the universal character. Denote by $a\in  \CW(\tilde{\Gamma}_{m_0}^-)$ the character $\iota^a/(\iota^c)^a$. Then 
by the argument in \cite[Proposition 5.3.1]{JLZ}, there exists  a cohomology class $Z_{\CF,\infty}\in H^1(F_{m_0}, \CV\otimes\iota^{\boldsymbol{\kappa}_D}\hat{\otimes}\CO(\CW(\tilde{\Gamma}_{m_0}^-))(-\boldsymbol{\kappa}_{m_0}^-))$ whose image in $H^1(F_{m_0}, \CV\otimes\iota^{\boldsymbol{\kappa}_D}(\iota^c)^j)$ at $j\geq0$ and $m\geq m_0$ is $\frac{z_{\CF,m}^{[j]}}{\alpha_\CF^{m}}$. 
Note that 
\[H^1(\CK^S, \CV\otimes Ind_{F_{m_0}}^\CK\iota^{\boldsymbol{\kappa}_D}\hat{\otimes}\CO(\CW(\tilde{\Gamma}_{m_0}^-))(-\boldsymbol{\kappa}_m^{-}))=H^1(\CK^S, \CV\otimes \iota^{\boldsymbol{\kappa}_D}\hat{\otimes}\CO(\CW(\tilde{\Gamma}^-))(-\boldsymbol{\kappa}^-))\]
The  cohomology class with coefficients in $R\hat{\otimes}\CO(\CW(\tilde{\Gamma}^-))$: \[z_\CF:=\frac{\mathrm{Cores}_{F_{m_0}/\CK}Z_{\CF,\infty} }{\sharp\Gal(F_{m_0}/\CK)}\in H^1(\CK^S, \CV\otimes \iota^{\boldsymbol{\kappa}_D}\hat{\otimes}\CO(\CW(\tilde{\Gamma}^-))(-\boldsymbol{\kappa}^-))\]
Clearly $z_{\CF}$ is supported on $\tilde{D}$. Then as in \cite[Theorem 5.4.1]{JLZ} (combining with the local computation in \cite[Appendix A]{Dis22}), the specialization of $z_\CF$ at the point $(\pi,\varphi_p,\chi)\in \tilde{D}$ is $z^{V_\sigma,\varphi_p,\chi}:=\frac{z^{\varphi,\chi}}{\alpha^m{r\choose j}}${\footnote{ To see the interpolation property in the case $r=0$ and $B\neq M_2(\BQ)$, employ the fact that $\Hom^\circ(Y_U, A_\pi)\otimes_{M_\pi} L\cong \pi^{\infty}$ and choose a point $x_0\in X_U$ such that $\varphi(x_0)=O$. One makes all constructions with $Y_U$ replaced by the affine variety $Y_U-{x_0}$. Then one obtains the interpolation property as in  \cite[Theorem 5.4.1]{JLZ}.}} 
\end{proof}

\printindex
\end{document}